\numberwithin{equation}{section}
\newtheorem{theorem}{Theorem}[section]
\crefname{theorem}{Theorem}{Theorems}
\newtheorem{lemma}[theorem]{Lemma}
\crefname{lemma}{Lemma}{Lemmas}
\newtheorem{definition}[theorem]{Definition}
\crefname{definition}{Definition}{Definitions}
\newtheorem{assumption}{Assumption}
\crefname{assumption}{Assumption}{Assumptions}
\newcommand{\N}{\mathcal{N}}
\newcommand{\cX}{\mathcal{X}}
\newcommand{\R}{\mathbb{R}}
\newcommand{\E}{\mathbb{E}}
\newcommand{\etal}{ et al. }
\newcommand{\half}{\frac{1}{2}}
\newcommand{\be}{\begin{equation}}
\newcommand{\ee}{\end{equation}}
\newcommand{\ba}{\begin{array}}
\newcommand{\ea}{\end{array}}
\newcommand{\bad}{\begin{aligned}}
\newcommand{\ead}{\end{aligned}}
\newcommand{\normtwo}[1]{\| #1 \|}
\newcommand{\abs}[1]{| #1 |}
\newcommand{\inp}[2]{\left\langle #1, #2 \right\rangle}
\newcommand{\argmin}{\mathop{\rm argmin}}
\newcommand{\dist}{\mathrm{dist}}
\newcommand{\st}{\mathrm{s.t. }}
\newcommand{\prox}{\mathrm{prox}}
\newcommand{\Proj}{\mathrm{Proj}}
\def\BibTeX{{\rm B\kern-.05em{\sc i\kern-.025em b}\kern-.08em
    T\kern-.1667em\lower.7ex\hbox{E}\kern-.125emX}}
\title{ \LARGE On Distributed Non-convex Optimization: Projected Subgradient Method For Weakly Convex Problems in Networks}
\author{Shixiang~Chen,~
	Alfredo~Garcia,~
	and~Shahin~Shahrampour
	\thanks{The authors are with the Wm Michael Barnes '64 Department of Industrial and Systems Engineering, Texas A\&M University, College Station, TX 77843. 
	Email addresses: {\tt\small sxchen@tamu.edu} (S. Chen), {\tt\small alfredo.garcia@tamu.edu } (A. Garcia), {\tt\small shahin@tamu.edu} (S.  Shahrampour).\\ This work was supported by NSF ECCS-1933878.}}
\begin{document}

\maketitle

\begin{abstract}
The stochastic subgradient method is a widely-used algorithm for solving large-scale optimization problems arising in machine learning. Often these problems are neither smooth nor convex. Recently, Davis\etal\cite{davis2019stochastic,davis2018subgradient} characterized the convergence of the stochastic subgradient method for the weakly convex case, which encompasses many important applications (e.g., robust phase retrieval, blind deconvolution, biconvex compressive sensing, and dictionary learning). In practice, distributed implementations of the projected stochastic subgradient method (stoDPSM) are used to speed-up risk minimization. In this paper, we propose a distributed implementation of the stochastic subgradient method with a theoretical guarantee. Specifically, we show the global convergence of stoDPSM  using the Moreau envelope stationarity measure. Furthermore, under a so-called sharpness condition, we show that deterministic DPSM (with a proper initialization) converges linearly to the sharp minima, using geometrically diminishing step-size. We provide numerical experiments to support our theoretical analysis. 
\end{abstract}


\IEEEpeerreviewmaketitle

\section{Introduction}
\label{sec:introduction}
Optimization in multi-agent networks has received a great deal of attention in the past few years in control, signal processing, and machine learning. A wide range of networked problems such as distributed detection \cite{shahrampour2016distributed}, estimation \cite{kar2012distributed}, and localization \cite{atanasov2014joint} can be formulated via distributed optimization with applications in wireless sensor networks \cite{mateos2012distributed}, robotic networks \cite{bullo2009distributed}, power networks \cite{bolognani2014distributed}, and social networks \cite{acemoglu2008convergence}. In such decentralized frameworks, a number of agents in a network need to accomplish a global task, which is formulated as an optimization. Each individual agent, however, has limited information about the objective function. Therefore, agents locally interact with each other to solve the global problem. Decentralized techniques have gained popularity over time due to robustness to individual failures, imposing low computational burden on individual agents, and promoting privacy. 

In a (constrained) multi-agent optimization, we deal with a problem of the form
\be \label{opt:pro_wcvx}
\min_x f(x) =\frac{1}{N} \sum_{i=1}^N f_i(x)\quad \st \quad x\in \cX,
\ee
where  $\cX\subset\R^n$ is a closed convex set known to all agents, and $f_i(x)$ is only available to agent $i$. Given this partial knowledge, agents must communicate with each other to minimize $f(x)$. There exists a large body of works on distributed convex optimization, where each $f_i(x)$ is convex (see e.g., the seminal work of \cite{nedic2009distributed} and its following papers). The literature has witnessed various algorithms for solving \eqref{opt:pro_wcvx}, which come with theoretical guarantees. In this paper, we depart from the classical convex setting and focus on {\it weakly convex and non-smooth} problems. In particular,  we  assume that every $f_i(x)$ is $\rho-$weakly convex 
 and the subgradient $\|\partial f_i(x)\|$ is uniformly bounded. {The definition of $\rho-$weakly convex function is as follows.}
\begin{definition}
  A function $f(x)$ is $\rho-$weakly convex ($\rho > 0$) if	there exists a convex function $h(x)$ such that $f(x)  = h(x) - \frac{\rho}{2}\|x\|^2$.
\end{definition}
Weakly convex problems play a key role in important machine learning applications such as robust phase retrieval \cite{davis2017nonsmooth,duchi2019solving,eldar2014phase}, blind deconvolution \cite{chan1998total,levin2011understanding}, biconvex compressive sensing \cite{ling2015self}, and dictionary learning \cite{davis2019stochastic}. Recently, Davis\etal\cite{davis2019stochastic,davis2018subgradient} characterized the convergence of the stochastic subgradient method for the weakly convex case. However, training the aforementioned models on a single device can take a significant amount of time for large-scale data. In practice, distributed implementations of the stochastic (sub)gradient method (stoDPSM) are used to speed-up the training time (see e.g. \cite{Tensorflow,Poseidon}) and they can achieve a linear speedup \cite{lian2017can}. 

In this paper, we focus on developing a theoretical  convergence result for the distributed projected subgradient method (DPSM)
\be \label{alg:dis_subgrad_orgin}
x_{i,k+1} = \Proj_{\mathcal{X}}\left(\sum_{j=1}^N a_{i,j}(k) x_{j,k}  - \alpha_k g_{i,k}\right),
\ee
where $\Proj_{\mathcal{X}}$ denotes the orthogonal projection onto $\cX$,  $a_{i,j}(k)$ is the weight agent $i$ associates to information received from agent $j$ at time $k$, $\alpha_k$ is the stepsize, and $g_{i,k}$ is any subgradient of $f_i$ evaluated at $\sum_{j=1}^N a_{i,j}(k) x_{j,k}$. This algorithmic  scheme was first proposed for unconstrained distributed convex optimization \cite{nedic2009distributed} and it was then extended to the constrained scenario \cite{nedic2010constrained,liu2017convergence}. Under standard assumptions on the weights $a_{i,j}(k)$ and $\alpha_k$, it can be shown that \eqref{alg:dis_subgrad_orgin} converges to a minimizer of $f(x)$ in \eqref{opt:pro_wcvx} for convex problems. Nevertheless, the weakly convex problem (which is essentially non-convex) has not been addressed in the literature.  (i)  We provide global convergence results for DPSM using the notion of Moreau envelope and show that the infimum of its gradient approaches zero with a rate $\mathcal{O}(k^{-1/4})$ {if the stepsize is $\alpha_k=\mathcal{O}(1/\sqrt{k})$}. (ii) We show a linear convergence rate of DPSM under a so-called sharpness condition. Specifically, the linear rate also relies on the fact that each local variable is sufficiently close to a sharp minimizer and  the  stepsize is given by $ \alpha_k=\mu_0 \gamma^k, \mu_0\in(0,1), \gamma\in(0,1)$, where  $\mu_0$ and $\gamma$ also depend on both function and network parameters. (iii) We also extend the global convergence results to the stochastic   setting (stoDPSM). This paper is the first work providing convergence analysis of {\it distributed} subgradient for {\it weakly convex, non-smooth} problems, extending the classical convex analysis \cite{nedic2009distributed} to a {\it non-convex} setting. In other words, it generalizes the centralized results of \cite{davis2019stochastic,davis2018subgradient} to the decentralized setting. The technical challenges are as follows. For the global convergence of (sto)DPSM using Moreau envelope, we show a novel and crucial weakly convex inequality in \cref{lem:alt_weak_cvx}. For proving the linear convergence rate under the presence of sharpness property, we need to carefully identify the relationship between the geometrically diminishing stepsize with the function and network parameters. Moreover, since the sharpness property holds in a local region,  each local variable should stay in the neighborhood of that region, posing another technical challenge.

\subsection{Related work}
We now briefly review the existing work on distributed (sub)gradient method. When $f_i$ is  convex and non-smooth,  the distributed  subgradient method converges in terms of function value\cite{nedic2009distributed} in unconstrained  scenario
and the distributed stochastic subgradient projection algorithms\cite{ram2010distributed} can deal with a common constraint. In the case that each agent only knows its own constraint information, convergence guarantee was  proved in \cite{nedic2010constrained}. In all above results, a diminishing stepsize (that is square-summable but not summable) is required and the  convergence for constrained problem is  measured by the distance between the  sequence and the optimal set, i.e., the limit of  $\dist(x_{i,k},\cX^*)$ is zero for any $i$, where $\cX^*$ is the optimal set.  The square-summable condition was  relaxed in \cite{liu2017convergence}, provided the optimum set is bounded. {The best convergence rate of non-smooth convex problem is given by $\inf_{0\leq k\leq T} f(\bar x_k) - f^* = \mathcal{O}(\log T/\sqrt{T}) $ with $\alpha_k = \mathcal{O}(1/\sqrt{k})$\cite{nedic2009distributed}, where $\bar x_k :=  1/N\sum_{i=1}^{N}x_{i,k}$ is the average point and $f^*$ is the optimal function value.} Moreover, a  convergence rate  of $\dist(x_{i,k},\cX^*) = \mathcal{O}(1/\sqrt{k})$ is shown in \cite{liu2017convergence} if the stepsize is set to $\alpha_k=\mathcal{O}(1/k)$ for strongly convex $f_i$.  
For smooth convex and unconstrained case, the convergence is  established in\cite{yuan2016convergence}.  
We refer to the survey \cite{nedic2018network} for a complete review of decentralized optimization of convex problems.

If $f_i$ is non-convex  and its gradient  $\nabla f_i$ is Lipschitz continuous,  $f_i$ is automatically weakly convex. Algorithms such as \cite{bianchi2012convergence,lian2017can,zeng2018nonconvex,assran2019stochastic,scutari2019distributed} have been proposed for the non-convex setting. For example,  the convergence of  distributed projected stochastic gradient was discussed in\cite{bianchi2012convergence}; an ergodic convergence rate was established in \cite{zeng2018nonconvex} for proximal  gradient method. A push-sum stochastic gradient method was proposed to train deep neural networks in \cite{assran2019stochastic} and the sublinear rate was established. 
When the objective is non-smooth, previous work, e.g., \cite{zeng2018nonconvex,scutari2019distributed} studied the composite form, i.e., $f_i(x)= g_i(x) + h_i(x),$ where $g_i$ is smooth but $h_i$ is non-smooth with a closed-form proximal mapping.  In contrast, the non-smooth objective in \eqref{opt:pro_wcvx}  generally does not follow an easy proximal mapping. When the computation of subgradient of $f_i$  is inexpensive, algorithm \eqref{alg:dis_subgrad_orgin} is a better choice. 

Recently, for weakly convex $f_i$, the  centralized proximal-type subgradient methods have been shown to converge in finite time in terms of a stationarity measure using Moreau envelope (see \cite{davis2019stochastic,duchi2019solving}). Under the presence of sharpness property, the centralized subgradient converges linearly in a local region \cite{davis2018subgradient}. We summarize the convergence results for distributed projected subgradient method in  \cref{table:convergence}.

\begin{table*}[t]
	\centering
	\begin{tabular}{|c|c|c|c|}
		\hline
		$f_i$        &  strongly convex                    &  convex              								& weakly convex        \\ \hline
		Measure & $\dist(x_{i,k},\cX^*)$	&   $\dist(x_{i,k}, \cX^*)$ or  $\inf\limits_{1\leq k\leq T} f(\bar x_k)- f^*$				&    $\|\nabla \varphi_t(\bar{x}_k)\|$	             \\    \hline
	          	&                &          &             		                                                      	This paper:	    \\ 
	 Convergence      & $\dist(x_{i,k},\cX^*)=\mathcal{O}(\frac{1}{\sqrt{k}})$  	&       $\lim\limits_{k\rightarrow\infty}  \dist(x_{i,k}, \cX^*) =0$          \cite{nedic2010constrained,liu2017convergence};             
	                                 &   $\inf\limits_{1\leq k\leq T}\|\nabla\varphi_t(\bar{x}_k) \|=\mathcal{O}(\frac{1}{{T}^{1/4}}) $	\\
		&    with $\alpha_k=\mathcal{O}(1/k)$  \cite{liu2017convergence}        &  {  $\inf\limits_{1\leq k\leq T} f(\bar x_k)- f^* = \mathcal{O}(\frac{\log T}{\sqrt{T}}) $ with  $\alpha_k = \mathcal{O}(\frac{1}{\sqrt{k}})$\cite{nedic2009distributed} }
		&       with $\alpha_k = \mathcal{O}(1/\sqrt{k})$.	   \\ \hline
	\end{tabular}
	\caption{Convergence results: distributed projected subgradient method for constrained optimization}\label{table:convergence}
\end{table*}

\section{Preliminaries}

{\bf Notation:}
We use $\inp{x}{y}=x^\top y$ to denote the Euclidean inner product and $\|x\|$ to denote the Euclidean norm of $x$. We denote by $\partial h(x)$ the subgradient set of a convex function $h(x)$. Abusing notation, we use $\inp{\partial h(x)}{y}$ to denote the inner product of any elements of $\partial h(x)$ and a vector $y$. 

\subsection{Network Model}
We consider a time-varying network of agents that can exchange information locally. To model the network, we use a time-varying graph $(\mathcal{V},E_k)$, where $\mathcal{V}=\{1,\dots ,N\}$ denotes
the set of nodes and $E_k\subseteq \mathcal{N}%
\times \mathcal{N}$ is the set of links connecting nodes at time $k>0$. 
Let $A(k)=[a_{i,j}(k)]$ denote the matrix of weights associated with links in the graph at time $k>0$.  For node $i$, $\N_i(k)$ denotes the neighborhood of $i$ in which $a_{i,j}(k)>0$.
Define $\Phi(k,s) = A(s)A(s+1)\cdots A(k-1)A(k)$ for $k\geq s$, $\Phi(k,k)=A(k)$ and $\Phi(k,s)=I$ for $k<s$.  
\subsection{Weak Convexity, Optimality Measure and Sharpness}
We assume the local objective $f_i(x)$ in \eqref{opt:pro_wcvx} is $\rho-$weakly convex for some $\rho\geq 0$; i.e., there exists a convex function $h_i(x)$ such that $f_i(x)  = h_i(x) - \frac{\rho}{2}\|x\|^2$. Although $f_i(x)$ is not convex, we may define its subdifferential by 
\be \label{def:subgrad}
\partial f_i(x)  = \partial h_i(x) - \rho x, \quad  \forall x \in \cX;
\ee
(see \cite{vial1983strong}). Here, $\partial h_i(x)$ is the subdifferential in the convex sense. 
The following lemma states an equivalent definition of weakly convex functions and strongly convex functions. {We are particularly interested in \eqref{ineq:alt_def_weak_cvx} for the analysis and we establish \eqref{ineq:alt_def_strongly_cvx} to prove \eqref{ineq:alt_def_weak_cvx}.} The proof is given  in the Appendix. 
\begin{lemma}\label{lem:alt_weak_cvx}
	If $f(x)$ is $\rho$-weakly convex and $g(x)$ is $\tau-$strongly convex in $\R^n$, then $\forall x_1,\ldots, x_m\in\R^n$, it follows that
	\be \label{ineq:alt_def_weak_cvx}
	f(\sum_{i=1}^m a_i x_i) \leq \sum_{i=1}^m a_i f(x_i) + \frac{\rho}{2}\sum_{i=1}^{m-1} \sum_{j= i+1}^m a_i a_j \|x_i-x_j\|^2
	\ee
	and 
	\be \label{ineq:alt_def_strongly_cvx}
	g(\sum_{i=1}^m a_i x_i) \leq \sum_{i=1}^m a_i g(x_i) -\frac{\tau}{2}\sum_{i=1}^{m-1} \sum_{j= i+1}^m a_i a_j \|x_i-x_j\|^2,
	\ee
	where $\sum_{i=1}^m a_i=1$ and $a_i\geq 0$ for all $i$.
\end{lemma}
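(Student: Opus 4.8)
The plan is to reduce both inequalities to a single algebraic identity combined with Jensen's inequality applied to the appropriate convex function. The one identity that does all the work is the weighted ``variance'' identity: for any $a_i\geq 0$ with $\sum_{i=1}^m a_i = 1$,
\[
\sum_{i=1}^m a_i \|x_i\|^2 - \Big\|\sum_{i=1}^m a_i x_i\Big\|^2 = \sum_{i=1}^{m-1}\sum_{j=i+1}^m a_i a_j \|x_i - x_j\|^2.
\]
I would establish this by expanding $\|x_i - x_j\|^2 = \|x_i\|^2 - 2\inp{x_i}{x_j} + \|x_j\|^2$, weighting by $a_i a_j$, and summing over all ordered pairs $(i,j)$: the normalization $\sum_i a_i = 1$ collapses each of the two quadratic sums $\sum_{i,j} a_i a_j\|x_i\|^2$ and $\sum_{i,j} a_i a_j\|x_j\|^2$ to $\sum_i a_i\|x_i\|^2$, while the cross term reassembles into $\|\sum_i a_i x_i\|^2$. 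Halving and discarding the vanishing diagonal terms then converts the ordered double sum into the stated sum over $i<j$.

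Next I would prove the strongly convex inequality \eqref{ineq:alt_def_strongly_cvx}. Since $g$ is $\tau$-strongly convex, $\phi(x) := g(x) - \frac{\tau}{2}\|x\|^2$ is convex, so Jensen's inequality gives $\phi(\sum_i a_i x_i) \leq \sum_i a_i \phi(x_i)$. Substituting $\phi = g - \frac{\tau}{2}\|\cdot\|^2$ and rearranging isolates
\[
g\Big(\sum_i a_i x_i\Big) \leq \sum_i a_i g(x_i) - \frac{\tau}{2}\Big(\sum_i a_i\|x_i\|^2 - \big\|\sum_i a_i x_i\big\|^2\Big),
\]
and the bracketed quantity is exactly the left-hand side of the identity above, producing the $-\frac{\tau}{2}\sum_{i<j}a_ia_j\|x_i - x_j\|^2$ term.

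Finally, \eqref{ineq:alt_def_weak_cvx} follows from precisely the same argument with one sign flipped. Writing $f = h - \frac{\rho}{2}\|\cdot\|^2$ with $h$ convex (the definition of $\rho$-weak convexity), Jensen applied to $h$ gives $h(\sum_i a_i x_i)\leq \sum_i a_i h(x_i)$; substituting and invoking the identity now yields the quadratic correction with a $+$ sign. Conceptually this is just \eqref{ineq:alt_def_strongly_cvx} read with $\tau = -\rho$, since a $\rho$-weakly convex function is exactly a ``$(-\rho)$-strongly convex'' one, which is why establishing the strongly convex version first immediately delivers the weakly convex version. I do not anticipate a genuine conceptual obstacle here; the main place demanding care is the bookkeeping in the identity itself, namely correctly using $\sum_i a_i = 1$ to collapse the single-index sums, tracking the factor of $\tfrac12$, and passing from the full ordered double sum to the $i<j$ range, which is routine but easy to slip on.
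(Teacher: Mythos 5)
Your proof is correct, and it takes a genuinely different route from the paper's. The paper proves the lemma by induction on $m$: the base case $m=2$ is obtained from the subgradient (prox-regular) inequality $f(y)\geq f(x)+\inp{\partial f(x)}{y-x}-\frac{\rho}{2}\|y-x\|^2$ applied at the convex combination, and the inductive step splits $\sum_{i=1}^{k+1}a_ix_i$ into a two-point combination of $\sum_{i=1}^{k}\frac{a_i}{b}x_i$ and $x_{k+1}$; crucially, the cross term $\|\sum_{i=1}^{k}\frac{a_i}{b}(x_i-x_{k+1})\|^2$ is then controlled by applying the strongly convex inequality \eqref{ineq:alt_def_strongly_cvx} to the $2$-strongly convex function $\|\cdot\|^2$, which is why the paper must carry both inequalities through the induction simultaneously. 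You instead bypass induction entirely: the weighted variance identity $\sum_i a_i\|x_i\|^2-\|\sum_i a_ix_i\|^2=\sum_{i<j}a_ia_j\|x_i-x_j\|^2$ plus Jensen's inequality applied to the convex shift ($f+\frac{\rho}{2}\|\cdot\|^2$ or $g-\frac{\tau}{2}\|\cdot\|^2$) yields both inequalities in two lines each, directly from the paper's own decomposition definition of weak convexity. Your argument is shorter, avoids the bookkeeping of the inductive cross terms, and makes transparent that \eqref{ineq:alt_def_weak_cvx} and \eqref{ineq:alt_def_strongly_cvx} are a single statement read with $\tau=-\rho$; what the paper's route buys is that it works purely from the subgradient inequality \eqref{ineq:subg_weak_cvx} (the characterization actually used throughout its convergence analysis) without ever invoking the convex-decomposition form, but for this lemma that generality costs considerably more work.
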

 
To analyze DPSM, we follow the framework of \cite{davis2019stochastic}, where a novel convergence analysis for {\it centralized} subgradient method is proposed. We extend this analysis to the distributed case for weakly convex problems. Since there  exist  different stationary points in non-convex problems, neither  the suboptimal objective  value  nor the  distance to the optimum set tend to be good measures for the analysis. On the other hand, the subgradient of the objective is not continuous, which makes it difficult to analyze the convergence of the subgradient norm.  A surrogate stationary measure for problem \eqref{opt:pro_wcvx} was thus defined using the Moreau envelope in \cite{davis2019stochastic}. We briefly review it in the sequel.

  Recall that   $f_i(x)$ is $\rho-$weakly convex, iff we  have the following inequality \cite[Lemma 2.1]{davis2019stochastic}
\be\label{ineq:subg_weak_cvx} f_i(y)\geq f_i(x) + \inp{ \partial f_i(x)}{y-x} - \frac{\rho}{2} \normtwo{y-x}^2. \ee
This inequality is also known as prox-regular inequality  introduced earlier in \cite{poliquin1996prox}.
Therefore, $f(x) =1/N \sum_{i=1}^{N}f_i(x)$ is also $\rho-$weakly convex. Let $\varphi(x) = f(x)+ \mathbb{I}_{\cX}(x)$, where $\mathbb{I}_{\cX}$ is the indicator function of $\cX$ \footnote{$\mathbb{I}_{\cX}(x)=0$ when $x\in \cX$, and $\mathbb{I}_{\cX}(x)=\infty$ otherwise.}. 
Define the Moreau envelope \cite{Rockafellar-book-70} as
\[
\varphi_{t}(x):=\min_{y\in\R^n} \varphi(y) + \frac{1}{2 t}\|y-x\|^{2}, \quad  0<t< 1/\rho.
\]
Since $t<1/\rho$ and $f_i$ is $\rho-$weakly convex, we have that $\varphi(y) + \frac{1}{2 t}\|y-x\|^{2}$ is strictly convex, i.e., the minimization problem above is strictly convex and the minimizer is unique. The Moreau envelope is a $C^1$ smooth approximation to the non-smooth function $f(x)$ over $\cX$. 
Let us define
\[
\hat{x} =\argmin_{y\in\R^n} \varphi(y)+\frac{1}{2 t}\|y-x\|^{2},
\]
where  $\prox_{tf}(x) :=\hat{x}$ is called the proximal mapping. Here, we omit $\cX$ in the notation $\prox_{tf}$, but recall that the proximal mapping is related to $\cX.$ The proximal mapping is only used in the analysis, and it is not computed in the algorithm.
From the optimality condition of $\hat{x}$, one has 
\[ 0\in \partial f(\hat{x}) + \partial \mathbb{I}_{\cX}(\hat{x})+ \frac{1}{t} (\hat{x}-x) .\]

It follows  that
\[ \dist(0,\partial f (\hat{x}) + \partial \mathbb{I}_{\cX}(\hat{x}) ) \leq    \frac{1}{t} \normtwo{\hat{x}-x}.\]
Therefore, if $\frac{1}{t} \normtwo{\hat{x}-x}\leq \varepsilon$, then $\hat{x}$ is $\varepsilon-$stationary and $x$ is close to the $\varepsilon-$stationary point $\hat{x}$. We also have  \cite{Rockafellar-book-70}
\be\label{def:surrogate measure}
\|\nabla \varphi_t({x})\|= \frac{1}{t} \normtwo{\hat{x}-x},
\ee   which can be used as near-stationarity  measure of $x$. 
{
Next, we introduce the  sharpness property.
\begin{definition}[Sharpness]\label{def:sharpness}
  A function $f:\cX \rightarrow \R $ possesses the  local sharpness property, if  there exist  constants $\beta>0$ and $B>0$ such that the following inequality holds  for a minimizer  $x^*$ of $f(x)$
  \begin{equation}\label{def:sharp}
   f(x)- \min f\geq \beta \|x-x^*\|, \quad  \forall x\in \mathcal{B},
  \end{equation}
  where $\mathcal{B} = \{x\in\cX: \|x-x^*\|\leq B \}.$
  Furthermore, $x^*$ is called the local sharp minimizer of $f$.
\end{definition}
As we can see, the sharpness property intuitively suggests that the objective grows at least linearly in $\mathcal{B}$. It has been shown that the centralized subgradient method converges linearly in the neighborhood of the local sharp minimizer\cite{davis2017nonsmooth,davis2018subgradient,li2019nonsmooth}. 
}
\subsection{Assumptions}
 In this part, we introduce the assumptions used for our analysis. To begin with, we define the average vector
 \[
 \bar{x}_{k}:=\frac{1}{N} \sum_{i=1}^{N} x_{i, k},
 \]
 and
 \[ v_{i,k} := \sum_{j\in\N_i(k)} a_{i,j}(k) x_{j,k}. \]
 Then, the iteration in DPSM \eqref{alg:dis_subgrad_orgin} can be rewritten as
 \be\label{alg:proj_sub}  x_{i,k+1} = \Proj_{\mathcal{X}}\left(  v_{i,k} - \alpha_k g_{i,k}\right),   \ee
 where  $g_{i,k} \in \partial f_i(v_{i,k})$ is any element of the subdifferential set. Unlike the centralized algorithm,  the distributed update does not rely on a fusion center and computing $v_{i,k}$ and $g_{i,k}$ can be done in a decentralized manner. 
The following assumptions on the network are commonly adopted in the literature\cite{tsitsiklis1984problems,nedic2009distributed,nedic2010constrained}.
\begin{assumption}[Weights rule]\label{assup:graph1}
	There exists a scalar $\eta\in(0,1)$ such that  for all $i,j\in\{1,\dots,N\}$,
	\begin{itemize}
    \item $a_{i,i}(k)\geq \eta$ for all $k\geq 0$.
    \item If $a_{i,j}(k)>0$ then $a_{i,j}(k)\geq \eta$.
	\end{itemize}
\end{assumption}

\begin{assumption}[Doubly stochasticity]\label{assup:graph2}
	The weight matrix $A(k)$ is doubly stochastic (i.e., $\sum_j a_{i,j}(k)=\sum_j a_{j,i}(k)=1, \forall i,k$).
\end{assumption}
\begin{assumption}[Connectivity]\label{assup:graph4}
	The graph $(\mathcal{V},E_\infty)$ is strongly connected, where $E_\infty$ is the set of edges $(j,i)$ representing agent pairs communicating directly infinitely many times, i.e.,
	$E_\infty = \{(j,i): (j,i)\in E_k \ \textit{for infinitely many indices k}\}$.
	\end{assumption}
\begin{assumption}[Bounded intercommunication interval]\label{assup:graph3}
There exists an integer $B\geq1$ 
such that for every $(j, i) \in E^\infty$, agent $j$ sends its information to the neighboring
agent $i$ at least once every $B$ consecutive time slots, i.e., at time $t_k$ or at time $t_k+1$ or
$\ldots$ or (at latest) at time $t_k+B-1$ for any $k \geq 0$.
\end{assumption}

We also need some assumptions on the function $f(x)$.
\begin{assumption}\label{assup:bound_subgrad} 
    	$f(x)$ is lower bounded. 
	Every $f_i$ is $\rho-$weakly convex and we have  $\|\partial f_i(x)\|\leq L$  for $x\in \cX$. 
\end{assumption} 
The bounded subgradient holds if $f_i(x)$ is globally $L-$Lipschitz continuous on $\R^n$ or $\mathcal{\mathcal{X}}$\cite{Rockafellar2009}. 
One common class of weakly convex functions is $f(x)=h(c(x))$, where $h$  is convex and Lipschitz and $c$ is smooth with a Lipschitz continuous Jacobian \cite{drusvyatskiy2019efficiency}. However, such $f(x)=h(c(x))$ is usually locally Lipschitz continuous. A common assumption to resolve this issue is that $\mathcal{X}$ is compact or the sequence $\{x_{i,k}\}$ is bounded. Then, the boundedness of subgradient is equivalent to the $L-$Lipschitz property for $f_i(x)$.  Such assumptions are usually needed in centralized algorithms (see \cite{davis2019stochastic}).

Last, the stepsize for the global convergence of DPSM should be non-summable  and diminishing.
\begin{assumption}\label{assup:stepsize}
	The stepsize $\alpha_k>0$ satisfies
	\[ \sum_{k=0}^\infty \alpha_k = \infty,\     \lim_{k\rightarrow\infty}\alpha_k = 0 \  \textit{and}\   {\lim_{k\rightarrow \infty}\frac{\alpha_{k+1}}{\alpha_k}=1}. \] 
\end{assumption}
A commonly used stepsize sequence satisfying \cref{assup:stepsize} can be as follows	\[ \alpha_k = \frac{1}{k^q},\quad  \text{where}\  q\in(0,1]. \] 
\subsection{Technical lemmas}
In this part, we introduce some necessary  lemmas. All proofs can be found in the Appendix. 
\begin{lemma}\label{lem:linear_rate_of_Phi} \cite[Proposition 1]{nedic2009distributed}
		Under  \cref{assup:graph1,assup:graph2,assup:graph3,assup:graph4}, 
 there exist constants $c>0$ and $\lambda\in(0,1)$ such that
 \[    \|\Phi(k,s) - \frac{1}{N}\mathbf{11}^\top\|_{\text{op}}  \leq  c\lambda^{k-s},\]
 where $\|\cdot\|_{op}$ is the matrix operator norm.
\end{lemma}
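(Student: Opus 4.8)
The plan is to identify the limit from double stochasticity and then show that the ``row disagreement'' of the product contracts geometrically. First I would record that every $A(k)$ is doubly stochastic (\cref{assup:graph2}), so $A(k)\mathbf{1}=\mathbf{1}$ and $\mathbf{1}^\top A(k)=\mathbf{1}^\top$; consequently each product $\Phi(k,s)$ is itself doubly stochastic, and $\tfrac1N\mathbf{11}^\top A(k)=\tfrac1N\mathbf{11}^\top=A(k)\tfrac1N\mathbf{11}^\top$. In particular the average of the rows of $\Phi(k,s)$ equals $\tfrac1N\mathbf{1}^\top\Phi(k,s)=\tfrac1N\mathbf{1}^\top$, so each row $[\Phi(k,s)]_{i\cdot}$ satisfies $[\Phi(k,s)]_{i\cdot}-\tfrac1N\mathbf{1}^\top=\tfrac1N\sum_{i'}([\Phi(k,s)]_{i\cdot}-[\Phi(k,s)]_{i'\cdot})$. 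This reduces the claim to bounding the maximal pairwise row difference, which I would measure with the coefficient of ergodicity $\tau(P):=\tfrac12\max_{i,i'}\sum_j|P_{ij}-P_{i'j}|$ for a stochastic matrix $P$; it is submultiplicative, $\tau(PQ)\le\tau(P)\tau(Q)$, and satisfies Dobrushin's identity $\tau(P)=1-\min_{i,i'}\sum_j\min(P_{ij},P_{i'j})$.

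The heart of the argument, and the step I expect to be the main obstacle, is a graph-theoretic positivity estimate over time windows. Set $\bar B:=(N-1)B$ with $B$ the intercommunication bound of \cref{assup:graph3}. Using that the self-weights are bounded below, $a_{ii}(k)\ge\eta$ (\cref{assup:graph1}), positivity of an entry persists under further multiplication; combined with strong connectivity of the aggregate graph (\cref{assup:graph4}) and the fact that every active link is used at least once per $B$ slots, one shows that information from any node reaches any other within $\bar B$ consecutive steps. Since every nonzero weight is at least $\eta$, this yields that the windowed product $\Phi(s+\bar B-1,s)$ has all entries bounded below by $\eta^{\bar B}$. The careful bookkeeping here, tracking how directed paths of length at most $N-1$ in the union graph translate into products of at most $\bar B$ instantaneous matrices, is where \cref{assup:graph1,assup:graph3,assup:graph4} must be used jointly.

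A matrix with all entries at least $\gamma:=\eta^{\bar B}$ is scrambling, so by Dobrushin's identity $\tau(\Phi(s+\bar B-1,s))\le 1-N\gamma=:1-\epsilon<1$. I would then split the interval $[s,k]$ into $\lfloor(k-s+1)/\bar B\rfloor$ consecutive windows of length $\bar B$, apply this bound on each window, and use submultiplicativity together with $\tau\le1$ on the leftover block to obtain $\tau(\Phi(k,s))\le(1-\epsilon)^{\lfloor(k-s+1)/\bar B\rfloor}$.

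Finally I would convert the ergodic-coefficient decay into the operator-norm bound. From the row identity above and $\|[\Phi(k,s)]_{i\cdot}-[\Phi(k,s)]_{i'\cdot}\|_1\le2\tau(\Phi(k,s))$, each row satisfies $\|[\Phi(k,s)]_{i\cdot}-\tfrac1N\mathbf{1}^\top\|\le2\tau(\Phi(k,s))$, whence $\|\Phi(k,s)-\tfrac1N\mathbf{11}^\top\|_{\mathrm{op}}\le\|\Phi(k,s)-\tfrac1N\mathbf{11}^\top\|_{\mathrm F}\le2\sqrt N\,\tau(\Phi(k,s))$. Combining with the geometric bound on $\tau$ gives the claim with $\lambda=(1-\epsilon)^{1/\bar B}\in(0,1)$ and a constant $c$ absorbing $2\sqrt N$ and the single leftover factor $(1-\epsilon)^{-1}$.
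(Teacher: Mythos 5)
Your proposal is correct, but it takes a genuinely different route from the paper. The paper does not reprove the geometric decay at all: it quotes the entrywise estimate of Proposition 1 in \cite{nedic2009distributed}, namely $\left|[\Phi(k,s)]_j^i-\frac{1}{N}\right|\leq 2\frac{1+\eta^{-B_0}}{1-\eta^{B_0}}(1-\eta^{B_0})^{(k-s)/B_0}$ with $B_0=(N-1)B$, and then converts entrywise decay to operator-norm decay via $\|A\|_{\text{op}}\leq\|A\|_F\leq N\|A\|_\infty$, obtaining $c=2N\frac{1+\eta^{-B_0}}{1-\eta^{B_0}}$ and $\lambda=(1-\eta^{B_0})^{1/B_0}$ (incidentally, the paper states that norm chain ``for symmetric matrices,'' though it holds for any real matrix, and $\Phi(k,s)-\frac{1}{N}\mathbf{11}^\top$ need not be symmetric). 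You instead give a self-contained proof through Dobrushin's coefficient of ergodicity: the windowed positivity bound $[\Phi(s+\bar B-1,s)]_{ij}\geq\eta^{\bar B}$ with $\bar B=(N-1)B$ --- which is exactly the combinatorial core (Lemma 2 of \cite{nedic2009distributed}) underlying the cited proposition, and which you correctly identify as the step where \cref{assup:graph1,assup:graph3,assup:graph4} must interact --- then scrambling plus Dobrushin's identity to get $\tau\leq 1-N\eta^{\bar B}$ per window, submultiplicativity across windows, and a row-wise $\ell_1$-to-Frobenius conversion that uses double stochasticity (\cref{assup:graph2}) to center the product at $\frac{1}{N}\mathbf{11}^\top$. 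Both arguments are sound; yours is the more modular, classical path and even yields slightly sharper constants (a $2\sqrt{N}$ conversion factor instead of $N$, and $\lambda=(1-N\eta^{\bar B})^{1/\bar B}\leq(1-\eta^{B_0})^{1/B_0}$), while the paper's citation-based proof is shorter and delivers the constants in precisely the form reused downstream in \cref{lem:consensus} and \cref{assup:geo_step}. The only caveat is that your windowed positivity estimate is sketched rather than executed in full --- the bookkeeping of directed paths of length at most $N-1$ in $E_\infty$, activation of each edge within every $B$ slots, and self-loop weights $a_{i,i}(k)\geq\eta$ used to ``wait'' --- but the sketch is the standard argument and contains no gap.
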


\begin{lemma}\label{lem:finite_sum}\cite[Lemma 7]{nedic2010constrained}\cite[Proposition 8]{liu2017convergence}
	Let $\lambda\in(0,1)$ and $\{\gamma_k\}$ be a positive sequence.  Suppose $\gamma_k$ satisfies \cref{assup:stepsize}. Considering the convolution sequence $\sum_{k=0}^{T-1}  \lambda^k\gamma_{T-k-1}$, we have
	\be\label{ineq:estimation_convolution}
	\sum_{k=0}^{T-1}  \lambda^k\gamma_{T-k-1} =  {\mathcal{O}(\frac{\gamma_{T-1}}{1-\lambda})}.
	\ee
\end{lemma}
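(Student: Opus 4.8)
The plan is to exploit the slow-variation condition $\lim_{k\to\infty}\gamma_{k+1}/\gamma_k=1$ in \cref{assup:stepsize}, which prevents the factor $\gamma_{T-k-1}$ from growing faster (as $k$ increases) than the geometric decay of $\lambda^k$ can absorb. Concretely, I would fix $\epsilon=(1-\lambda)/2$, so that $\lambda/(1-\epsilon)<1$. Since $\gamma_{k+1}/\gamma_k\to1$, there is an index $K$ with $\gamma_{m+1}/\gamma_m\geq 1-\epsilon$ for all $m\geq K$. Telescoping this bound backward shows that, whenever $T-k-1\geq K$,
\[
\frac{\gamma_{T-k-1}}{\gamma_{T-1}}=\prod_{m=T-k-1}^{T-2}\frac{\gamma_m}{\gamma_{m+1}}\leq\Big(\frac{1}{1-\epsilon}\Big)^{k}.
\]

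Next I would split the convolution at $k=T-1-K$ (taking $T>K$; the finitely many cases $T\le K$ give a bounded sum, for which the estimate is trivial). On the head $0\le k\le T-1-K$ the displayed bound yields a convergent geometric series,
\[
\sum_{k=0}^{T-1-K}\lambda^k\gamma_{T-k-1}\leq\gamma_{T-1}\sum_{k=0}^{\infty}\Big(\frac{\lambda}{1-\epsilon}\Big)^{k}=\frac{(1-\epsilon)\,\gamma_{T-1}}{1-\epsilon-\lambda}\leq\frac{2\gamma_{T-1}}{1-\lambda},
\]
where the last step uses $\epsilon=(1-\lambda)/2$. On the tail $T-K\le k\le T-1$ the indices $T-k-1$ range over the fixed finite set $\{0,\dots,K-1\}$, so $\gamma_{T-k-1}\le M:=\max_{0\le j\le K-1}\gamma_j$, and since $\lambda^k\le\lambda^{T-K}$ there,
\[
\sum_{k=T-K}^{T-1}\lambda^k\gamma_{T-k-1}\leq MK\lambda^{T-K}.
\]

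It then remains to show this tail is negligible next to $\gamma_{T-1}$. The same lower bound gives $\gamma_{T-1}\ge\gamma_K(1-\epsilon)^{T-1-K}$, so the tail-to-$\gamma_{T-1}$ ratio is at most a constant times $(\lambda/(1-\epsilon))^{T-K}$, which tends to $0$ because $\lambda<1-\epsilon$. Hence the tail is $o(\gamma_{T-1})$, and combining it with the head estimate gives $\sum_{k=0}^{T-1}\lambda^k\gamma_{T-k-1}=\mathcal{O}(\gamma_{T-1}/(1-\lambda))$. The main obstacle is precisely this tail: its terms couple a high power of $\lambda$ with the largest entries of $\{\gamma_k\}$ (those at small indices), so one must argue they do not dominate. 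The key realization is that the ratio condition forces $\gamma_{T-1}$ itself to decay no faster than $(1-\epsilon)^{T}$, a rate strictly slower than $\lambda^{T}$; choosing $\epsilon$ strictly below $1-\lambda$ is exactly what simultaneously makes the head series summable and the tail vanish.
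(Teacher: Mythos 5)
Your proof is correct, but it follows a genuinely different route from the paper's. The paper also splits the convolution into a head and a tail, but at the $T$-dependent index $K_0(T)=\lceil \log\gamma_{T-1}/\log\lambda\rceil$, i.e., the point past which $\lambda^k\le\gamma_{T-1}$: the head is bounded by $\frac{1}{1-\lambda}\max_{0\le k\le K_0(T)-1}\gamma_{T-k-1}$, the tail by $\frac{\lambda^{K_0(T)}}{1-\lambda}M\le\frac{M\gamma_{T-1}}{1-\lambda}$ using a uniform bound $M$ on the sequence (which comes from $\gamma_k\to 0$), and the ratio condition enters only qualitatively, to argue by contradiction (invoking $\sum_k\gamma_k=\infty$) that $K_0(T)=o(T)$ and hence that the window maximum is asymptotically comparable to $\gamma_{T-1}$. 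You instead split at a fixed distance $K$ from the end, where $K$ is the index past which $\gamma_{m+1}/\gamma_m\ge 1-\epsilon$, and use the ratio condition quantitatively: telescoping gives the term-by-term bound $\gamma_{T-k-1}\le\gamma_{T-1}(1-\epsilon)^{-k}$, so the head is dominated by a geometric series with ratio $\lambda/(1-\epsilon)<1$, and the remaining $K$ terms are negligible because $\lambda^{T-K}$ decays strictly faster than the lower bound $\gamma_{T-1}\ge\gamma_K(1-\epsilon)^{T-1-K}$. Your version buys rigor and generality: it uses only positivity and the ratio condition (neither $\gamma_k\to 0$ nor non-summability, both of which the paper's proof invokes) and produces explicit constants, whereas the paper's argument is shorter but leans on the informal step that $K_0(T)=o(T)$ forces $\max_{0\le k\le K_0(T)-1}\gamma_{T-k-1}/\gamma_{T-1}\to 1$, which is asserted without a quantitative estimate; the paper's split, on the other hand, isolates more transparently the scale at which $\lambda^k$ itself drops below $\gamma_{T-1}$.
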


For convergence analysis, we should show that the deviation of individual errors from the mean  $\|x_{i,k} - \bar{x}_k\|$ goes to zero.  We define a vector $\Delta_k$ where $\Delta_{k,i} :=  x_{i,k}-\bar{x}_k$. That is,  $\Delta_k\in\R^{Nn}$ is the vector formed by stacking all individual deviations from the mean. 
The following inequality \eqref{lem:consensun_ineq1} 
for $\Delta_k$ was established in  \cite{nedic2010constrained,liu2017convergence,nedic2018network}  for convex problems. We show that it still holds for the weakly convex case.
\begin{lemma}\label{lemma:consensus1} 
	Under  \cref{assup:graph1,assup:graph2,assup:graph3,assup:graph4,assup:bound_subgrad,assup:stepsize}, 
	for the distributed projected subgradient algorithm \eqref{alg:proj_sub}, we have the following consensus result
	\be \label{lem:consensun_ineq1}
	\lim\limits_{k\rightarrow \infty} \normtwo{\Delta_{k,i} } = 0, \quad \forall i.
	\ee
\end{lemma}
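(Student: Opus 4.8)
The plan is to recast the update \eqref{alg:proj_sub} as a doubly-stochastic averaging step perturbed by a small term, and then to show that the perturbation is negligible thanks to the geometric mixing of $\Phi$ and the vanishing stepsize. First I would introduce the projection residual $e_{i,k} := \Proj_{\mathcal{X}}(v_{i,k}-\alpha_k g_{i,k}) - (v_{i,k}-\alpha_k g_{i,k})$, so that \eqref{alg:proj_sub} reads $x_{i,k+1} = \sum_j a_{i,j}(k)x_{j,k} + \epsilon_{i,k}$ with $\epsilon_{i,k} := e_{i,k}-\alpha_k g_{i,k}$. Because every iterate lies in $\cX$ while $\sum_j a_{i,j}(k)=1$ and $a_{i,j}(k)\geq 0$, the point $v_{i,k}$ is a convex combination of feasible points, hence $v_{i,k}\in\cX$ and $\Proj_{\mathcal{X}}(v_{i,k})=v_{i,k}$; non-expansiveness of the projection then gives $\|e_{i,k}\|\leq \alpha_k\|g_{i,k}\|$ and therefore $\|\epsilon_{i,k}\|\leq 2\alpha_k L$ by \cref{assup:bound_subgrad}. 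The crucial observation is that this bound uses only the boundedness of the subgradient, not the convexity of $f_i$, which is exactly why the convex-case estimate carries over verbatim to the weakly convex setting.

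Next I would unroll this linear recursion with the transition matrices. Stacking the agents into $X_k$ (rows indexed by $i$), one obtains $X_k = \Phi(k-1,0)X_0 + \sum_{s=0}^{k-1}\Phi(k-1,s+1)\mathcal{E}_s$, where $\mathcal{E}_s$ stacks the $\epsilon_{i,s}$. Subtracting the average, which is preserved by the doubly-stochastic dynamics (\cref{assup:graph2} yields $\tfrac1N\mathbf{1}\mathbf{1}^\top\Phi = \tfrac1N\mathbf{1}\mathbf{1}^\top$), gives
\[
\Delta_k = \Big(\Phi(k-1,0)-\tfrac1N\mathbf{1}\mathbf{1}^\top\Big)X_0 + \sum_{s=0}^{k-1}\Big(\Phi(k-1,s+1)-\tfrac1N\mathbf{1}\mathbf{1}^\top\Big)\mathcal{E}_s.
\]
I would then split the sum into the terms with $s\leq k-2$, where \cref{lem:linear_rate_of_Phi} applies and yields $\|\Phi(k-1,s+1)-\tfrac1N\mathbf{1}\mathbf{1}^\top\|_{\text{op}}\leq c\lambda^{k-s-2}$, and the boundary term $s=k-1$, where $\Phi(k-1,k)=I$ falls outside the scope of the mixing bound.

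Finally, the three resulting pieces are estimated separately. The initial term decays like $c\lambda^{k-1}\|X_0\|\to 0$. The body of the sum is a convolution $\sum_{s=0}^{k-2}c\lambda^{k-s-2}\cdot 2\sqrt N L\,\alpha_s$, which after reindexing matches the hypothesis of \cref{lem:finite_sum} and is therefore $\mathcal{O}(\alpha_{k-2}/(1-\lambda))\to 0$ under \cref{assup:stepsize}. The boundary term is controlled crudely by $\|(I-\tfrac1N\mathbf{1}\mathbf{1}^\top)\mathcal{E}_{k-1}\|\leq 2\sqrt N L\,\alpha_{k-1}\to 0$. Combining the three bounds shows $\|\Delta_{k,i}\|\leq\|\Delta_k\|\to 0$ for every $i$. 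I expect the only genuine subtlety to be this boundary term: the most recent perturbation is not smoothed by $\Phi$ and must instead be absorbed by the smallness of $\alpha_{k-1}$, so one cannot simply feed the entire sum into the mixing estimate. Everything else is a routine combination of the geometric decay in \cref{lem:linear_rate_of_Phi} with the convolution bound in \cref{lem:finite_sum}.
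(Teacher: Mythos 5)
Your proof is correct and takes essentially the same approach as the paper: rewrite the iteration as doubly-stochastic averaging plus an $\mathcal{O}(\alpha_k L)$ perturbation, unroll via the transition matrices, subtract the mean using double stochasticity, and combine the geometric mixing bound of \cref{lem:linear_rate_of_Phi} with the convolution estimate of \cref{lem:finite_sum}, treating the most recent (un-mixed) perturbation term separately. The paper's proof does exactly this through the recursion $\Delta_{k+1}=A(k)\Delta_k+(I-J)e_k$; your bound $2\alpha_k L$ (versus the paper's $\alpha_k L$, obtained via \cref{lem:property_projection}) differs only by an immaterial constant.
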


Furthermore, similar to the result of \cite[Proposition 8]{liu2017convergence}, the convergence rate can be characterized as follows. 
\begin{lemma}\label{lem:consensus}
	Under  \cref{assup:graph1,assup:graph2,assup:graph3,assup:graph4,assup:bound_subgrad,assup:stepsize}, 
for the distributed projected subgradient algorithm \eqref{alg:proj_sub}, we have the following error rate
\be\label{lem:consensus_order}
\|\Delta_k\|  = {\mathcal{O}(\sqrt{N}L\cdot\frac{\alpha_k}{1-\lambda})}.
\ee
\end{lemma}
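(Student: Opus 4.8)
The plan is to adapt the standard unrolling argument for distributed (sub)gradient consensus (as in \cite{nedic2010constrained,liu2017convergence}) to the present weakly convex setting. The guiding observation is that the only structural property of the iterates invoked anywhere is the uniform subgradient bound $\|g_{i,k}\|\le L$ from \cref{assup:bound_subgrad}; convexity of the $f_i$ plays no role. Thus \cref{lem:consensus} is the quantitative refinement of \cref{lemma:consensus1}, and the convex proof carries over essentially verbatim once the disturbances are controlled by $L$ rather than by a function-value argument.

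First I would linearize the projected update. Introducing the projection residual $e_{i,k} := \Proj_{\cX}(v_{i,k}-\alpha_k g_{i,k}) - (v_{i,k}-\alpha_k g_{i,k})$, the iteration \eqref{alg:proj_sub} can be written as $x_{i,k+1} = \sum_{j} a_{i,j}(k)x_{j,k} + w_{i,k}$ with disturbance $w_{i,k} := -\alpha_k g_{i,k} + e_{i,k}$. Since each $v_{i,k}$ is a convex combination of the $x_{j,k}\in\cX$, we have $v_{i,k}\in\cX$; nonexpansiveness of the projection then gives $\|e_{i,k}\| = \dist(v_{i,k}-\alpha_k g_{i,k},\cX) \le \alpha_k\|g_{i,k}\| \le \alpha_k L$, and hence $\|w_{i,k}\|\le 2\alpha_k L$. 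This is precisely the step where $v_{i,k}\in\cX$ is essential, and it is the only place the constraint set enters.

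Next I would unroll the linear recursion with the transition matrices, obtaining in stacked form $x_{k} = (\Phi(k-1,0)\otimes I)x_0 + \sum_{s=0}^{k-1}(\Phi(k-1,s+1)\otimes I)w_s$. Applying the averaging projector $M := I - \tfrac1N \mathbf{1}\mathbf{1}^\top$ gives $\Delta_k = (M\otimes I)x_k$, and because every $\Phi$ is doubly stochastic one has $M\Phi(k,s) = \Phi(k,s) - \tfrac1N\mathbf{1}\mathbf{1}^\top$, whose operator norm is bounded by $c\lambda^{k-s}$ via \cref{lem:linear_rate_of_Phi}. Separating the $s=k-1$ term (for which $\Phi(k-1,k)=I$ and $\|M\|_{\mathrm{op}}=1$) and using $\|w_s\|\le 2\alpha_s L\sqrt{N}$ in the stacked norm yields
\[
\|\Delta_k\| \le c\lambda^{k-1}\|x_0\| + 2cL\sqrt{N}\sum_{s=0}^{k-2}\lambda^{\,k-2-s}\alpha_s + 2\alpha_{k-1}L\sqrt{N}.
\]

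The final step estimates the three terms. The initial contribution decays geometrically and is therefore $o(\alpha_k)$; the last term is $\mathcal{O}(\sqrt N L\,\alpha_k)$ by the ratio condition $\alpha_{k-1}/\alpha_k\to1$ in \cref{assup:stepsize}. The middle term is, after reindexing $s\mapsto (k-2)-s$, exactly a convolution of the geometric sequence with $\{\alpha_s\}$ in the form required by \cref{lem:finite_sum}, hence equal to $\mathcal{O}(\alpha_{k-2}/(1-\lambda)) = \mathcal{O}(\alpha_k/(1-\lambda))$, again absorbing the index shift through $\alpha_{k+1}/\alpha_k\to1$. Collecting the three bounds gives $\|\Delta_k\| = \mathcal{O}(\sqrt N L\,\alpha_k/(1-\lambda))$, as claimed. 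I expect the main obstacle to be bookkeeping rather than conceptual: correctly controlling the projection residual and aligning the unrolled sum with the precise convolution form of \cref{lem:finite_sum}, including the harmless index shift justified by the stepsize ratio assumption.
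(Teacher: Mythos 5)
Your proof is correct and follows essentially the same route as the paper's: rewrite the projected update as a consensus step plus an $\mathcal{O}(\alpha_k L)$ disturbance (valid because $v_{i,k}\in\cX$), unroll with the transition matrices $\Phi$, pass to $\Phi(k,s)-\frac{1}{N}\mathbf{1}\mathbf{1}^\top$ via double stochasticity and \cref{lem:linear_rate_of_Phi}, and finish with the convolution bound of \cref{lem:finite_sum} plus the stepsize ratio condition in \cref{assup:stepsize}. The only differences are cosmetic: the paper bounds the combined disturbance $\Proj_{\mathcal{X}}(v_{i,k}-\alpha_k g_{i,k})-v_{i,k}$ directly by $\alpha_k L$ using \cref{lem:property_projection}, whereas you split it into the subgradient step plus a projection residual and pick up a harmless factor of $2$.
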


We also have the following well-known property of the projection onto convex sets.
\begin{lemma}\cite{nedic2010constrained}\label{lem:property_projection}
	For the convex closed set $\cX$, it follows that $\forall y\in \cX$
	\[ \|\Proj_{\mathcal{X}}(x)-y\|^2\leq  \|x-y\|^2 - \|x-\Proj_{\mathcal{X}}(x)\|^2. \]
\end{lemma}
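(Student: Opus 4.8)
The plan is to reduce the whole statement to the first-order optimality (variational) characterization of the projection onto a convex set. First I would write $p := \Proj_{\cX}(x)$ and recall that, by definition, $p$ is the unique minimizer of the strictly convex map $z \mapsto \normtwo{x-z}^2$ over the closed convex set $\cX$. Everything then follows from one inequality plus an algebraic identity.

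The crucial step is to establish the variational inequality
\[ \inp{x-p}{y-p} \leq 0 \quad \text{for all } y \in \cX. \]
To prove it I would exploit the convexity of $\cX$ directly: for any $y \in \cX$ and any $\theta \in [0,1]$, the point $z_\theta := p + \theta(y-p)$ lies in $\cX$. Since $p$ minimizes $\normtwo{x-\cdot}^2$ over $\cX$, the one-dimensional function $\phi(\theta) := \normtwo{x - z_\theta}^2$ attains its minimum over $[0,1]$ at $\theta = 0$, so its right derivative there must be nonnegative. Writing $u = x-p$ and $v = y-p$ gives $\phi(\theta) = \normtwo{u}^2 - 2\theta\inp{u}{v} + \theta^2\normtwo{v}^2$, hence $\phi'(0^+) = -2\inp{x-p}{y-p} \geq 0$, which is exactly the claimed inequality.

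With the variational inequality in hand, I would finish via the expansion
\[ \normtwo{x-y}^2 = \normtwo{x-p}^2 + 2\inp{x-p}{p-y} + \normtwo{p-y}^2. \]
Because $\inp{x-p}{p-y} = -\inp{x-p}{y-p} \geq 0$, the middle term is nonnegative, so $\normtwo{x-y}^2 \geq \normtwo{x-p}^2 + \normtwo{p-y}^2$. Rearranging and substituting $p = \Proj_{\cX}(x)$ yields precisely $\normtwo{\Proj_{\cX}(x)-y}^2 \leq \normtwo{x-y}^2 - \normtwo{x-\Proj_{\cX}(x)}^2$.

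The only genuine obstacle is the rigorous justification of the variational inequality; the rest is a completion-of-squares identity. Since $\cX$ is merely closed and convex (not smooth, nor described by explicit constraints), the cleanest route is the line-segment argument above, which uses convexity solely to keep $z_\theta$ feasible and so avoids any appeal to differentiability of $\cX$ or to Lagrange/KKT machinery.
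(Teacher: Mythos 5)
Your proof is correct: the variational inequality $\inp{x-\Proj_{\cX}(x)}{y-\Proj_{\cX}(x)}\leq 0$ obtained via the line-segment argument, followed by the completion-of-squares expansion, is exactly the standard argument for this fact. Note that the paper itself offers no proof of this lemma --- it is simply cited from \cite{nedic2010constrained} --- so your write-up supplies the same classical projection-theorem reasoning that the cited reference relies on, and there is nothing to correct.
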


We further have  the  following property of the proximal mapping. 
Although the proximal mapping   is not non-expansive when $f(x)$ is   weakly convex,  it is still Lipschitz continuous. This can be easily proved using the same idea for convex functions \cite{moreau1965proximite}. We omit the proof.  
\begin{lemma}\label{lem:nonexpansive:proximal_map}
	If $f(x)$ is $\rho-$weakly convex, then the proximal mapping  with $t<1/\rho$ satisfies
	\[\bad
	 \| \prox_{tf}(x_1) - \prox_{tf}(x_2) \|\leq \frac{1}{1-t\rho}&\|x_1-x_2\|,\\
	 &\quad \forall  x_1,x_2\in\cX.
	 \ead\]
\end{lemma}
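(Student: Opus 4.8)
The plan is to mirror the classical Moreau argument for non-expansiveness of the proximal map of a convex function, but to replace the monotonicity of $\partial f$ by the weaker \emph{hypomonotonicity} that weak convexity provides; the loss from genuine monotonicity is exactly what degrades the constant from $1$ to $\frac{1}{1-t\rho}$. Write $\hat{x}_1 = \prox_{tf}(x_1)$ and $\hat{x}_2 = \prox_{tf}(x_2)$. From the optimality condition recorded above for each, there exist subgradients $s_i\in\partial f(\hat{x}_i)$ and normal-cone elements $n_i\in\partial\mathbb{I}_{\cX}(\hat{x}_i)$ with $\frac{1}{t}(x_i-\hat{x}_i)=s_i+n_i$ for $i=1,2$. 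The goal is to control $\normtwo{\hat{x}_1-\hat{x}_2}$ by subtracting these two relations and pairing the result with $\hat{x}_1-\hat{x}_2$.

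First I would establish the hypomonotonicity of $\partial f$. Writing the prox-regular inequality \eqref{ineq:subg_weak_cvx} twice, once with $(x,y)=(\hat{x}_2,\hat{x}_1)$ and once with the roles of the two points swapped, and adding the two inequalities, yields $\inp{s_1-s_2}{\hat{x}_1-\hat{x}_2}\geq -\rho\,\normtwo{\hat{x}_1-\hat{x}_2}^2$. Since $\mathbb{I}_{\cX}$ is convex, its subdifferential (the normal cone of $\cX$) is monotone, so $\inp{n_1-n_2}{\hat{x}_1-\hat{x}_2}\geq 0$.

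Then I would combine everything. Subtracting the two optimality relations gives $\frac{1}{t}\big[(x_1-x_2)-(\hat{x}_1-\hat{x}_2)\big]=(s_1-s_2)+(n_1-n_2)$; taking the inner product with $\hat{x}_1-\hat{x}_2$ and invoking the two inequalities of the previous step produces $\frac{1}{t}\inp{x_1-x_2}{\hat{x}_1-\hat{x}_2}\geq\big(\frac{1}{t}-\rho\big)\normtwo{\hat{x}_1-\hat{x}_2}^2$. Applying Cauchy--Schwarz on the left and then dividing through by $\normtwo{\hat{x}_1-\hat{x}_2}$ (the case $\hat{x}_1=\hat{x}_2$ being trivial) leaves $\frac{1}{t}\normtwo{x_1-x_2}\geq\big(\frac{1}{t}-\rho\big)\normtwo{\hat{x}_1-\hat{x}_2}$, which rearranges to the claimed bound; the constant $\frac{1}{1-t\rho}$ is positive and finite precisely because $t<1/\rho$, which is also what guarantees the minimizer defining $\prox_{tf}$ is unique in the first place. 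The only genuinely non-routine step is the hypomonotonicity estimate, which is where weak convexity enters and where the factor $1-t\rho$ originates; the remainder is the standard monotone-operator manipulation.
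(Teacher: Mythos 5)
Your proof is correct and is precisely the argument the paper has in mind: the paper omits the proof entirely, remarking only that it ``can be easily proved using the same idea for convex functions'' (Moreau), which is exactly the monotone-operator manipulation you carry out, with the hypomonotonicity bound $\inp{s_1-s_2}{\hat{x}_1-\hat{x}_2}\geq -\rho\normtwo{\hat{x}_1-\hat{x}_2}^2$ substituting for genuine monotonicity and producing the degraded constant $\frac{1}{1-t\rho}$. All steps check out, including the use of the optimality condition already recorded in the paper and the monotonicity of the normal cone of $\cX$.
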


\section{Main Results and Convergence Analysis}
{In this section, we state the main convergence results of DPSM. First, if the  \cref{assup:graph1,assup:graph2,assup:graph4,assup:graph3,assup:bound_subgrad,assup:stepsize} hold, the   Moreau envelope sequence $\{\varphi_t(\bar{x}_k)\}$ converges and the infimum of its  gradient converges to $0$. Second, under the sharpness condition, DPSM converges linearly with a geometrically diminishing stepsize in a neighborhood of a sharp minimizer.  Finally, we also provide the convergence result of distributed projected stochastic subgradient method.}
\subsection{Global Convergence}
We now establish the first convergence result. The following lemma states the improvement after one iteration of the algorithm \eqref{alg:proj_sub}. The proof is given  in the Appendix. 
\begin{lemma}[One-step improvement]\label{lem:one-step}
	Let 
	\begin{align*}
	s_k&:=\argmin_{y\in\cX} f(y) + \frac{1}{2t}\|y-\bar{x}_k\|^2,\\
	\hat{v}_{i,k} &:=\argmin _{y\in\cX} f(y)+\frac{1}{2 t}\|y-v_{i,k}\|^{2}.
	\end{align*}
	Under  \cref{assup:bound_subgrad}, we have
	\be\label{ineq:one_step}
	\bad
	&\quad\sum_{i=1}^N \left\|x_{i , k+1}- 	\hat{v}_{i,k} \right\|^{2} \leq   \sum_{i=1}^N \left\|{v}_{i , k}- 	\hat{v}_{i,k} \right\|^{2} \\
	&\quad+ 2\alpha_k\left(   N(-\frac{1}{2t} +\rho ) \normtwo{\bar{x}_k -s_k  }^2 \right.\\
	&\quad \left.+ \frac{L(2-t\rho)}{1-t\rho} \sum_{i=1}^N \normtwo{x_{i,k} - \bar{x}_k  } \right.\\
	&\quad \left.+   2\rho(1+ \frac{1}{(1-t\rho)^2})\sum_{i=1}^N\normtwo{ x_{i,k}  -\bar{x}_k  }^2 \right) + N L^2\alpha_k^2.
	\ead 
	\ee
\end{lemma}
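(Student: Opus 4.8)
The plan is to track the per-agent quantity $\normtwo{x_{i,k+1}-\hat{v}_{i,k}}^2$ and compare it against $\normtwo{v_{i,k}-\hat{v}_{i,k}}^2$, which is exactly the recursion structure on the right-hand side. Since $\hat{v}_{i,k}=\prox_{tf}(v_{i,k})\in\cX$, I would first apply the projection inequality of \cref{lem:property_projection} to $x_{i,k+1}=\Proj_{\cX}(v_{i,k}-\alpha_k g_{i,k})$, obtaining $\normtwo{x_{i,k+1}-\hat{v}_{i,k}}^2\leq\normtwo{v_{i,k}-\alpha_k g_{i,k}-\hat{v}_{i,k}}^2$. Expanding the square produces the recursion term $\normtwo{v_{i,k}-\hat{v}_{i,k}}^2$, a cross term $-2\alpha_k\inp{g_{i,k}}{v_{i,k}-\hat{v}_{i,k}}$, and $\alpha_k^2\normtwo{g_{i,k}}^2$; the last is bounded by $L^2\alpha_k^2$ via the bounded-subgradient part of \cref{assup:bound_subgrad}, giving the final $NL^2\alpha_k^2$ after summation. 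Everything then reduces to upper-bounding $-\sum_i\inp{g_{i,k}}{v_{i,k}-\hat{v}_{i,k}}$.

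For the cross term I would invoke the prox-regular inequality \eqref{ineq:subg_weak_cvx} for $f_i$ at $v_{i,k}$ (where $g_{i,k}\in\partial f_i(v_{i,k})$), evaluated at $y=\hat{v}_{i,k}$, which yields $-\inp{g_{i,k}}{v_{i,k}-\hat{v}_{i,k}}\leq f_i(\hat{v}_{i,k})-f_i(v_{i,k})+\frac{\rho}{2}\normtwo{\hat{v}_{i,k}-v_{i,k}}^2$. Summing over $i$, I would replace the heterogeneous arguments by the common reference points $\bar{x}_k$ and $s_k$: since each $f_i$ is $L$-Lipschitz (\cref{assup:bound_subgrad}), I bound $-f_i(v_{i,k})\leq -f_i(\bar{x}_k)+L\normtwo{v_{i,k}-\bar{x}_k}$ and $f_i(\hat{v}_{i,k})\leq f_i(s_k)+L\normtwo{\hat{v}_{i,k}-s_k}$, and the Lipschitz property of the proximal map (\cref{lem:nonexpansive:proximal_map}) gives $\normtwo{\hat{v}_{i,k}-s_k}=\normtwo{\prox_{tf}(v_{i,k})-\prox_{tf}(\bar{x}_k)}\leq\frac{1}{1-t\rho}\normtwo{v_{i,k}-\bar{x}_k}$. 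Using $\sum_i f_i(\bar{x}_k)=Nf(\bar{x}_k)$ and $\sum_i f_i(s_k)=Nf(s_k)$, the two Lipschitz corrections combine into the linear consensus term $\frac{L(2-t\rho)}{1-t\rho}\sum_i\normtwo{v_{i,k}-\bar{x}_k}$.

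The descent term comes from $N(f(s_k)-f(\bar{x}_k))$ together with the residual quadratic $\frac{\rho}{2}\sum_i\normtwo{\hat{v}_{i,k}-v_{i,k}}^2$. By the definition of $s_k$ as the unique minimizer defining $\prox_{tf}(\bar{x}_k)$, comparing its value to that of $\bar{x}_k$ gives $f(s_k)-f(\bar{x}_k)\leq -\frac{1}{2t}\normtwo{s_k-\bar{x}_k}^2$. For the quadratic piece I would use the decomposition $\hat{v}_{i,k}-v_{i,k}=(s_k-\bar{x}_k)+[(\hat{v}_{i,k}-s_k)-(v_{i,k}-\bar{x}_k)]$ and apply $\normtwo{a+b}^2\leq 2\normtwo{a}^2+2\normtwo{b}^2$ twice, together with the prox-Lipschitz bound, to get $\normtwo{\hat{v}_{i,k}-v_{i,k}}^2\leq 2\normtwo{s_k-\bar{x}_k}^2+4(1+\frac{1}{(1-t\rho)^2})\normtwo{v_{i,k}-\bar{x}_k}^2$. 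The $\frac{\rho}{2}$-scaled leading part contributes $N\rho\normtwo{s_k-\bar{x}_k}^2$, which merges with $-\frac{N}{2t}\normtwo{s_k-\bar{x}_k}^2$ into the stated coefficient $N(-\frac{1}{2t}+\rho)$, while the remaining part gives the quadratic consensus term $2\rho(1+\frac{1}{(1-t\rho)^2})\sum_i\normtwo{v_{i,k}-\bar{x}_k}^2$. Finally I would pass from $v_{i,k}$-deviations to $x_{i,k}$-deviations: double stochasticity (\cref{assup:graph2}) yields $v_{i,k}-\bar{x}_k=\sum_j a_{i,j}(k)(x_{j,k}-\bar{x}_k)$, so the triangle inequality and Jensen give $\sum_i\normtwo{v_{i,k}-\bar{x}_k}\leq\sum_i\normtwo{x_{i,k}-\bar{x}_k}$ and $\sum_i\normtwo{v_{i,k}-\bar{x}_k}^2\leq\sum_i\normtwo{x_{i,k}-\bar{x}_k}^2$, matching the right-hand side.

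The main obstacle is the bookkeeping in the last paragraph: the single quadratic $\frac{\rho}{2}\sum_i\normtwo{\hat{v}_{i,k}-v_{i,k}}^2$ has to be split so that it simultaneously supplies the extra $N\rho\normtwo{s_k-\bar{x}_k}^2$ that upgrades the descent coefficient from $-\frac{1}{2t}$ to $-\frac{1}{2t}+\rho$ and the quadratic consensus error with exactly the constant $2\rho(1+\frac{1}{(1-t\rho)^2})$; getting both right hinges on the particular three-way split above and on repeated use of \cref{lem:nonexpansive:proximal_map} to neutralize the mismatch between the agent-specific prox outputs $\hat{v}_{i,k}$ and the averaged prox output $s_k$. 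By contrast, trading function-value gaps for first-order consensus errors through $L$-Lipschitzness is routine once the reference points $\bar{x}_k$ and $s_k$ are fixed.
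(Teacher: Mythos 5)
Your proposal is correct and matches the paper's own proof essentially step for step: projection non-expansiveness, expansion of the square with the $L^2\alpha_k^2$ bound, the weak-convexity (prox-regular) inequality for the cross term, the Lipschitz plus prox-Lipschitz decomposition through the reference points $\bar{x}_k$ and $s_k$ yielding the $\frac{L(2-t\rho)}{1-t\rho}$ coefficient, the same three-way quadratic split producing $\rho\|\bar{x}_k-s_k\|^2 + 2\rho(1+\frac{1}{(1-t\rho)^2})\|v_{i,k}-\bar{x}_k\|^2$, the optimality of $s_k$ to absorb $-\frac{1}{2t}\|\bar{x}_k-s_k\|^2$, and double stochasticity to pass from $v_{i,k}$-deviations to $x_{i,k}$-deviations. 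No gaps.
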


By invoking Lemma \ref{lemma:consensus1}, the distance $\|x_{i,k}-\bar{x}_k\|$  goes to $0$ for all $i$ when  $\alpha_k$   goes to zero.
Note that for  $0<t<1/(2\rho)$, the term $(-\frac{1}{2t}+\rho)\|\bar{x}_k-s_k\|^2$ in \eqref{ineq:one_step} is strictly negative  if  $\|\bar{x}_k-s_k\|^2$ is not zero.  Then,  we may have $\sum_{i=1}^N \left\|x_{i , k+1}- 	\hat{v}_{i,k} \right\|^{2} <   \sum_{i=1}^N \left\|{v}_{i , k}- 	\hat{v}_{i,k} \right\|^{2}$, which means that comapred to $v_{i,k}$, the new point $x_{i,k+1}$ is closer to $\hat{v}_{i,k}$. Hence, the algorithm continues to make progress.
Now, we present our first main result, which shows the decay of the gradient of the Moreau envelope (optimality measure). The proof is given  in the Appendix. 
\begin{theorem}\label{thm:convergence_1}
	Let $0<t<\frac{1}{2\rho}$ and  $\{ x_{i,k}\}$ be the sequence of projected subgradient method for solving problem \eqref{opt:pro_wcvx}.  
	Under  \cref{assup:graph1,assup:graph2,assup:graph4,assup:graph3,assup:bound_subgrad,assup:stepsize},  
	\begin{enumerate}
		\item[(1)] If $\sum_{k=0}^\infty \alpha_k^2 <\infty$, there exists $\bar{\varphi}_t$ such that  $\lim\limits _{k\rightarrow \infty} \varphi_{t}(x_{i,k})=\lim\limits _{k\rightarrow \infty} \varphi_{t}(\bar{x}_k) =\bar{\varphi}_t;$ 
		\item[(2)]  There exists  {$  b_k=\mathcal{O}(\frac{L^2\alpha_k^2}{(1-\lambda)^2})$ } such that 
		\small{
		\begin{align*}
&\inf_k \Vert \nabla \varphi_t(\bar{x}_k)\Vert^2 \\
\leq &\frac{2}{1-2t\rho} \cdot \frac{ {\varphi}_{t}(\bar{x}_0)- \inf {\varphi}_{t}(x)+\sum\limits_{k=0}^{\infty} b_{k}+\sum\limits_{k=0}^{\infty}\frac{L^{2}\alpha_{k}^{2}}{2t}
}{ \sum_{k=0}^{\infty} \alpha_k}.
		\end{align*}
   }
	\end{enumerate}

\end{theorem}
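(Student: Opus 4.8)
The plan is to distill the per-agent estimate of \cref{lem:one-step} into a single scalar descent recursion for the Moreau envelope along the average trajectory, of the form
\[
\varphi_t(\bar x_{k+1}) \le \varphi_t(\bar x_k) - \frac{\alpha_k(1-2t\rho)}{2}\,\|\nabla\varphi_t(\bar x_k)\|^2 + b_k + \frac{L^2\alpha_k^2}{2t},
\]
and then to read off both claims from it. The key bridge is the identity $\sum_{i}\|v_{i,k}-\hat v_{i,k}\|^2 = 2t\sum_i\big(\varphi_t(v_{i,k}) - f(\hat v_{i,k})\big)$, which holds because $\hat v_{i,k}=\prox_{tf}(v_{i,k})$ attains the envelope $\varphi_t(v_{i,k})$. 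First I would bound $\varphi_t(\bar x_{k+1})$ from above by inserting the suboptimal feasible point $\frac1N\sum_i\hat v_{i,k}$ into the defining minimization of the envelope, then apply the weak-convexity inequality \eqref{ineq:alt_def_weak_cvx} (with $a_i=1/N$) to $f\big(\frac1N\sum_i\hat v_{i,k}\big)$ and Jensen's inequality to the quadratic term. This gives
\[
N\varphi_t(\bar x_{k+1}) \le \sum_i f(\hat v_{i,k}) + \frac{1}{2t}\sum_i\|x_{i,k+1}-\hat v_{i,k}\|^2 + \frac{\rho}{2N}\sum_{i<j}\|\hat v_{i,k}-\hat v_{j,k}\|^2 .
\]

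Next I would substitute the one-step estimate \eqref{ineq:one_step} for $\sum_i\|x_{i,k+1}-\hat v_{i,k}\|^2$; combined with the bridge identity, the $\sum_i f(\hat v_{i,k})$ contributions cancel and what remains is $\frac1N\sum_i\varphi_t(v_{i,k})$ together with the error terms of \cref{lem:one-step}. To return from $v_{i,k}$ to $\bar x_k$, I would use that $\varphi_t$ has an $M$-Lipschitz gradient (since $\nabla\varphi_t(x)=\tfrac1t(x-\prox_{tf}(x))$ and $\prox_{tf}$ is $\tfrac{1}{1-t\rho}$-Lipschitz by \cref{lem:nonexpansive:proximal_map}, so $M=\tfrac1t\big(1+\tfrac1{1-t\rho}\big)$), and write $\varphi_t(v_{i,k}) \le \varphi_t(\bar x_k)+\inp{\nabla\varphi_t(\bar x_k)}{v_{i,k}-\bar x_k}+\tfrac M2\|v_{i,k}-\bar x_k\|^2$. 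The linear terms vanish after averaging because $\sum_i(v_{i,k}-\bar x_k)=0$ by double stochasticity (\cref{assup:graph2}). Finally, using the envelope-gradient identity \eqref{def:surrogate measure}, i.e.\ $\|\nabla\varphi_t(\bar x_k)\|^2=\tfrac1{t^2}\|\bar x_k-s_k\|^2$, the strictly negative term $2\alpha_k N(-\tfrac1{2t}+\rho)\|\bar x_k-s_k\|^2$ becomes, after the factor $\tfrac1{2t}$ and division by $N$, exactly $-\tfrac{\alpha_k(1-2t\rho)}{2}\|\nabla\varphi_t(\bar x_k)\|^2$, while every other term is a consensus residual controlled by $\|\Delta_k\|$ or $\|\Delta_k\|^2$. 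Collecting these residuals (the cross term $\sum_{i<j}\|\hat v_{i,k}-\hat v_{j,k}\|^2$ bounded via \cref{lem:nonexpansive:proximal_map}, the $\tfrac M2$-term, and the first- and second-order terms from \cref{lem:one-step}) and invoking the rate $\|\Delta_k\|=\mathcal{O}(\sqrt N L\,\alpha_k/(1-\lambda))$ of \cref{lem:consensus} yields $b_k=\mathcal{O}\big(L^2\alpha_k^2/(1-\lambda)^2\big)$ and establishes the recursion.

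With the recursion in hand, both claims follow. For (2), summing over all $k$ and telescoping $\varphi_t(\bar x_{k+1})-\varphi_t(\bar x_k)$, and using that $\varphi_t$ is bounded below by $\inf_\cX f$ (\cref{assup:bound_subgrad}), I obtain $\tfrac{1-2t\rho}{2}\sum_k\alpha_k\|\nabla\varphi_t(\bar x_k)\|^2 \le \varphi_t(\bar x_0)-\inf\varphi_t+\sum_k b_k+\sum_k\tfrac{L^2\alpha_k^2}{2t}$; bounding $\inf_k\|\nabla\varphi_t(\bar x_k)\|^2\cdot\sum_k\alpha_k$ below by the left-hand side and rearranging gives the stated inequality. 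For (1), when $\sum_k\alpha_k^2<\infty$ both $\sum_k b_k$ and $\sum_k\tfrac{L^2\alpha_k^2}{2t}$ are finite; dropping the nonpositive gradient term leaves $\varphi_t(\bar x_{k+1})\le\varphi_t(\bar x_k)+c_k$ with $c_k$ summable, so the standard quasi-monotone argument (subtracting $\sum_{j<k}c_j$ produces a nonincreasing sequence that is bounded below) shows $\varphi_t(\bar x_k)$ converges to some $\bar\varphi_t$. Since $\|\nabla\varphi_t\|\le L$ on $\cX$ makes $\varphi_t$ $L$-Lipschitz there and $\|x_{i,k}-\bar x_k\|\to0$ by \cref{lemma:consensus1}, we conclude $\varphi_t(x_{i,k})\to\bar\varphi_t$ for every $i$.

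The main obstacle is the bridge step: producing a clean relation between consecutive envelope values out of the per-agent distance inequality of \cref{lem:one-step}. The difficulty is that $\bar x_{k+1}$ is an \emph{average of projections} rather than a single projected point, so no scalar proximal-descent inequality applies directly; routing through the individual proximal points $\hat v_{i,k}$ and invoking both the across-agents weak convexity of $f$ (\cref{lem:alt_weak_cvx}) and the Lipschitz proximal map (\cref{lem:nonexpansive:proximal_map}) to force the $f(\hat v_{i,k})$ terms to cancel is precisely the nontrivial manoeuvre. The remaining work—verifying gradient-Lipschitzness of $\varphi_t$ and tracking the $(1-t\rho)$ and $(1-\lambda)$ dependencies so that every residual collapses into one $b_k$ of order $L^2\alpha_k^2/(1-\lambda)^2$—is careful but routine bookkeeping.
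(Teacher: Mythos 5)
Your proof is correct, and it takes a genuinely different route from the paper's. The paper runs its one-step recursion on the \emph{network average of envelope values}, $\bar\varphi_{t,k}=\tfrac1N\sum_i\varphi_t(x_{i,k})$: after invoking \cref{lem:one-step}, it bounds $\varphi_t(v_{i,k})\le\sum_j a_{i,j}(k)\varphi_t(x_{j,k})+(\text{consensus error})$ by comparing $\hat v_{i,k}$ against the feasible competitor $\sum_j a_{i,j}(k)\hat x_{j,k}$ and applying \cref{lem:alt_weak_cvx} with the mixing weights $a_{i,j}(k)$, then collapses the double sum using double stochasticity; statement (1) for $\varphi_t(\bar x_k)$ is only recovered at the end through a continuity-plus-consensus step, and the telescoped bound technically comes out with $\bar\varphi_{t,0}=\tfrac1N\sum_i\varphi_t(x_{i,0})$ rather than $\varphi_t(\bar x_0)$. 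You instead run the recursion directly on the \emph{envelope of the average}, $\varphi_t(\bar x_k)$: you insert the competitor $\tfrac1N\sum_i\hat v_{i,k}$ into the envelope minimization at $\bar x_{k+1}$, apply \cref{lem:alt_weak_cvx} with uniform weights $1/N$ to the proximal points, cancel the $f(\hat v_{i,k})$ terms against the bridge identity $\sum_i\|v_{i,k}-\hat v_{i,k}\|^2=2t\sum_i\bigl(\varphi_t(v_{i,k})-f(\hat v_{i,k})\bigr)$, and---the genuinely new ingredient---pass from $\sum_i\varphi_t(v_{i,k})$ back to $N\varphi_t(\bar x_k)$ via the descent lemma for the $M$-smooth envelope together with $\sum_i(v_{i,k}-\bar x_k)=0$, a step the paper never needs. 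What your route buys: the recursion lives on exactly the quantity appearing in the theorem, so (1) and the bound with $\varphi_t(\bar x_0)$ in the numerator fall out literally, with no separate transfer step. What it costs: two facts you assert should be justified, namely that $\nabla\varphi_t$ is $M$-Lipschitz \emph{on $\cX$} (fine, since $v_{i,k},\bar x_k\in\cX$, segments stay in the convex set $\cX$, and \cref{lem:nonexpansive:proximal_map} applies there), and that $\|\nabla\varphi_t\|\le L$ on $\cX$ so that $\varphi_t$ is $L$-Lipschitz there; the latter is true but not immediate for the \emph{constrained} envelope---it follows from the optimality condition $\tfrac1t(x-\hat x)=g+n$ with $g\in\partial f(\hat x)$, $n\in N_{\cX}(\hat x)$, and $\langle n,x-\hat x\rangle\le 0$ for $x\in\cX$, whence $\tfrac1t\|x-\hat x\|\le\|g\|\le L$. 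Apart from this, both proofs rest on the same core lemmas (\cref{lem:one-step}, \cref{lem:alt_weak_cvx}, \cref{lem:nonexpansive:proximal_map}, \cref{lem:consensus}) and the same Polyak quasi-monotone argument for statement (1).
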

Statement (1) of \cref{thm:convergence_1} suggests that the Moreau envelope function value  converges  at the mean $\bar{x}_k$ if $\sum_{k=0}^\infty \alpha_k^2 <\infty$. Statement (2) is the decentralized counterpart of the centralized algorithm established in \cite{davis2019stochastic}. 
It also provides the convergence rate of $\inf_k \Vert \nabla \varphi_t(\bar{x}_k)\Vert^2$. 
 For example, if $\alpha_k = \mathcal{O}(1/\sqrt{k})$, we have \[ \inf_{1\leq k\leq T} \Vert \nabla \varphi_t(\bar{x}_k)\Vert^2 = { \mathcal{O}(\frac{\bar{\varphi}_{t,0}-  \bar{\varphi}_{t}}{\sqrt{T}} + \frac{L^2}{(1-\lambda)^2}\cdot\frac{\log T}{\sqrt{T}}) } \]
for sufficiently large $T$. Compared with the centralized algorithm\cite{davis2019stochastic}, we have an extra term $\sum_k b_k$ in the upper bound, which is the cost of decentralization as it has the constant  $\frac{1}{(1-\lambda)^2}$ involving network parameters. For the convex problem, a similar result is established under the optimality measure  $\inf_{1\leq k\leq T} f(\bar x_k) - f^*$ \cite[Theorem 8]{nedic2018network}. But the dependence on the $\lambda$ is  $\frac{1}{1-\lambda}$. Whether $\frac{1}{(1-\lambda)^2}$ is optimal for weakly-convex problems is an interesting question that we leave for future work.

\subsection{Local Convergence Rate with Sharpness Property}
In this section, we discuss the convergence rate of  the Algorithm \ref{alg:dis_subgrad_orgin} under the  presence of  sharpness property defined in \cref{def:sharpness}. 
It has been shown the centralized subgradient method converges linearly in the neighborhood of a sharp minimizer \cite{davis2017nonsmooth,davis2018subgradient}, if the Polyak stepsize \cite{polyak1969minimization} or geometrically diminishing stepsize \cite{goffin1977convergence} are adopted. The Polyak stepsize \cite{polyak1969minimization} and geometrically diminishing stepsize were firstly proposed for convex problems and they also work for weakly convex problems. {Since the Polyak stepsize needs the knowledge of the optimal function value and the full information of the objective $f$, } we will only consider the  geometrically diminishing stepsize, i.e, $\alpha_k = \mu_0 \gamma^k$, where $\mu_0>0$ and $\gamma\in(0,1)$ are constants decided by the problem parameters. Under some conditions, we can show the linear rate of DPSM in \cref{thm:linear_rate}.  The proof idea is as follows.  As the sharpness is a property of the whole function $f(x)$, we can only  use the sharpness inequality at $\bar{x}_k$.   We first need to estimate the deviation from mean $\|\Delta_{k}\|$ when using the geometrically diminishing stepsize.

\begin{lemma}\label{lem:estm_geo_step}
 Let  the stepsize $\alpha_k$ in Algorithm \ref{alg:dis_subgrad_orgin} be  $\alpha_k = \mu_0 \gamma^k, \ k\geq 0$, where $\mu_0>0$, $\gamma\geq \lambda^\delta$, $\delta \in(0,1)$  and  $\lambda$ is the parameter given in \cref{lem:linear_rate_of_Phi}. Then, $\|\Delta_k\| = \mathcal{O}(\alpha_k)$.
\end{lemma}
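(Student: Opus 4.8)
The plan is to bypass \cref{lem:consensus} and \cref{lem:finite_sum} entirely, since both invoke \cref{assup:stepsize}, which the geometric stepsize $\alpha_k=\mu_0\gamma^k$ violates (it is summable, so it cannot meet the non-summability requirement). Instead I would redo the consensus estimate from scratch, this time explicitly tracking the competition between two geometric rates: the network mixing rate $\lambda$ and the stepsize rate $\gamma$. First I would recast the update \eqref{alg:proj_sub} as a pure consensus step perturbed by a bounded term. Writing $p_{i,k}:=x_{i,k+1}-v_{i,k}$, we get $x_{i,k+1}=\sum_j a_{i,j}(k)x_{j,k}+p_{i,k}$. Since $v_{i,k}\in\cX$ (a convex combination of points in $\cX$), applying \cref{lem:property_projection} with $x=v_{i,k}-\alpha_k g_{i,k}$ and $y=v_{i,k}$ together with \cref{assup:bound_subgrad} yields $\normtwo{p_{i,k}}\leq\alpha_k\normtwo{g_{i,k}}\leq\alpha_k L$. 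By the double stochasticity of \cref{assup:graph2}, the average obeys $\bar{x}_{k+1}=\bar{x}_k+\tfrac1N\sum_i p_{i,k}$.

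Next I would unroll the recursion using the transition matrices, obtaining
\[
x_{i,k}=\sum_j[\Phi(k-1,0)]_{i,j}x_{j,0}+\sum_{r=0}^{k-1}\sum_j[\Phi(k-1,r+1)]_{i,j}\,p_{j,r},
\]
with the convention $\Phi(k-1,k)=I$. Subtracting $\bar{x}_k=\tfrac1N\sum_j x_{j,0}+\tfrac1N\sum_{r=0}^{k-1}\sum_j p_{j,r}$ and replacing each coefficient $[\Phi(\cdot,\cdot)]_{i,j}$ by $[\Phi(\cdot,\cdot)]_{i,j}-\tfrac1N$ (legitimate because the averaged mass is exactly $\tfrac1N$), I would bound the coefficients via \cref{lem:linear_rate_of_Phi}, $\bigl|[\Phi(k,s)]_{i,j}-\tfrac1N\bigr|\leq c\lambda^{k-s}$. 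This gives, after isolating the $r=k-1$ term (whose coefficient $\delta_{ij}-\tfrac1N$ lies outside the range of \cref{lem:linear_rate_of_Phi} but is bounded by $1$),
\[
\normtwo{\Delta_{k,i}}\leq cN\lambda^{k-1}\max_j\normtwo{x_{j,0}}+cNL\sum_{r=0}^{k-2}\lambda^{k-r-2}\alpha_r+O(\alpha_{k-1}).
\]

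The crux is then the convolution $\sum_{r=0}^{k-2}\lambda^{k-r-2}\alpha_r$ with $\alpha_r=\mu_0\gamma^r$. Because $\delta\in(0,1)$ and $\lambda\in(0,1)$, the hypothesis $\gamma\geq\lambda^\delta$ forces $\gamma>\lambda$, hence $\gamma/\lambda>1$. Factoring out $\lambda^{k-2}$ gives $\mu_0\lambda^{k-2}\sum_{r=0}^{k-2}(\gamma/\lambda)^r$, a geometric sum with ratio exceeding $1$ that is dominated by its last term, so the whole expression is $O\!\bigl(\gamma^{k-1}/(\gamma-\lambda)\bigr)=O(\alpha_k)$, using $\alpha_{k-1}/\alpha_k=\gamma^{-1}=O(1)$. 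The initial-condition contribution $\lambda^{k-1}$ is likewise $O(\alpha_k)$ since $(\lambda/\gamma)^k\to0$. Summing the per-agent bounds over $i$ yields $\normtwo{\Delta_k}=O(\alpha_k)$.

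The main obstacle is precisely this balance between the two rates, which the standard analysis handles only implicitly. If one instead had $\gamma\leq\lambda$, the consensus error would be governed by $\lambda^k$ (or by $k\lambda^k$ when $\gamma=\lambda$), which decays faster than, or equally to, $\alpha_k$ only up to a growing polynomial factor, and the clean conclusion $\normtwo{\Delta_k}=O(\alpha_k)$ would break. The condition $\gamma\geq\lambda^\delta$ with $\delta<1$ is exactly what guarantees the strict inequality $\gamma>\lambda$, ensuring that the stepsize decays slowly enough relative to mixing that the consensus error is dominated by $\alpha_k$ rather than by the mixing rate. I expect verifying this strict inequality and carefully collecting the boundary terms of the geometric sum to be the only delicate points; the remaining steps are routine.
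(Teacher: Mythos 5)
Your proof is correct and takes essentially the same route as the paper: the paper's proof simply cites the unrolled recursion \eqref{ineq:estimation_of_Delta} from the proof of \cref{lemma:consensus1}---which, exactly as you observe, is derived from the network assumptions and the subgradient bound alone and does not require \cref{assup:stepsize}---and then bounds the resulting geometric convolution, just as you do after rederiving that recursion from scratch. The only difference is cosmetic: you bound the convolution via the implied strict inequality $\gamma>\lambda$ (dominating the geometric sum by its last term, giving a constant proportional to $1/(\gamma-\lambda)$), whereas the paper substitutes $\lambda\leq\gamma^{1/\delta}$ to obtain the explicit constant $\frac{c\,\gamma^{1/\delta-1}}{1-\gamma^{1/\delta-1}}$ in \eqref{ineq:estimation_of_Delta_2}, a form that is reused verbatim in the proof of \cref{thm:linear_rate}.
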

\begin{proof}
	  From inequality \eqref{ineq:estimation_of_Delta} in the proof of \cref{lemma:consensus1} and the fact $\gamma\geq \lambda^\delta $,  we have
	 \be\label{ineq:estimation_of_Delta_2}
	 \bad
	 &\quad \|\Delta_{k+1}\|\\
	 & \leq   c\lambda^{k}\|\Delta_0\| +  c\sqrt{N}L \sum_{l=0}^{k-1} \lambda^{k-l-1} \alpha_{l} + \sqrt{N}L\alpha_{k}\\
	 &\leq  \left( \frac{c}{\lambda}  \|\Delta_0\|  +  \frac{\sqrt{N}L}{\lambda^2}( c \frac{\gamma^{\frac{1}{\delta}-1}}{1- \gamma^{\frac{1}{\delta}-1}} + \lambda)\mu_0 \right)\gamma^{k+1}.
	 \ead\ee
\end{proof}

\begin{assumption}\label{assup:geo_step} 
    Let $x_{1,0},\ldots,x_{N,0}$ be the initial points in Algorithm \ref{alg:proj_sub}.
    Given any constants $\Lambda\in(\lambda,1)$ and  $\Gamma\geq \sqrt{2}$, define 
    {\small
    \begin{align*}
    e_0&:=\min\left\{\max\left\{\frac{\beta}{\rho\Gamma}, \sqrt{\frac{1}{N} \sum_{i=1}^N\|{x}_{i,0} - x^*\|^2}\right\},\frac{B}{\Gamma}\right\},\\
    	a&:=\frac{2 (L+\beta)L}{\lambda^2},\\
    	q &:=  \frac{2\beta }{\Gamma } e_0- \rho e_0^2-\frac{2(L+\beta) c}{\sqrt{N}\lambda}\|\Delta_0\|, 
    \end{align*} }
    where $c,\lambda$ are constants given in \cref{lem:linear_rate_of_Phi}, $\beta$ and $B$ are defined in \eqref{def:sharp}, $\rho$ is the weak-convexity parameter, and $L$ is the bound on subgradients. Let the stepsize in Algorithm \ref{alg:proj_sub} be given by $\alpha_k = \mu_0 \gamma^k,$ where   $0<\mu_0\leq \min\{\frac{e_0}{2\beta - \rho e_0},\frac{q}{10\sqrt{N}(a\lambda +L^2+\frac{ ac\Lambda}{1-\Lambda})}\}$ and $\gamma\in(0,1).$
\end{assumption}
	We use the stepsize assumption above to prove the following theorem. The proof sketch is as follows. The sharpness property holds for the global objective $f(x)$. This motivates us to consider the full information at average point $\bar x_k$. With the help of  Lemma III.3 and Lemma A.1 in the Appendix, we can show that $\sum_{i=1}^N\|{x}_{i,k} - x^*\|^2$ decays linearly, but not for     $\| x_{i,k} - x^*\|$,  $i\in[N]$. Using the triangle inequality 
	   \[    \| x_{i,k+1} - x^*\|  \leq \| x_{i,k+1} - \bar{x}_{k+1} \| + \| \bar{x}_{k+1} - x^*\|, \]
	   and Lemma III.3, 
	    we can show $\| x_{i,k} - x^*\|$ also converges linearly. Meanwhile, we need to carefully consider the relation between $\alpha_k$ and the network  and problem parameters. The proof is provided in the Appendix. 
\begin{theorem}\label{thm:linear_rate} 
	Let $N\geq 2$ and $x^*$ be a local sharp minimizer of problem \eqref{opt:pro_wcvx}.  
	Suppose the initial points $x_{1,0},\ldots,x_{N,0}$ in Algorithm \ref{alg:proj_sub} satisfy for all $ i\in\{1,\ldots,N\}$ the three constraints
	\begingroup\allowdisplaybreaks
	\begin{align*}
	   \sum_{i=1}^N\|{x}_{i,0} - x^*\|^2&\leq \frac{N}{\Gamma^2} \min\left\{ (\frac{2\beta}{\rho})^2,B^2 \right\}\\
	   \|x_{i,0} - x^*\|^2&\leq  \frac{\Gamma^2}{N}\sum_{i=1}^N\|{x}_{i,0} - x^*\|^2,  \\
	   \|\Delta_0\| &< \frac{\frac{2} {\Gamma} \beta e_0 - \rho e_0^2}{2(L+\beta) c}\lambda,
	\end{align*}\endgroup
	where $c,\lambda$ are constants given in \cref{lem:linear_rate_of_Phi}. Under \cref{assup:graph1,assup:graph2,assup:graph4,assup:graph3,assup:bound_subgrad,assup:geo_step},
	there exists sufficiently small $\delta>0$ such that  
	 for $\gamma =\lambda^{\delta}$,
	we have 
	\be\label{ineq:linear_1}
	\sum_{i=1}^{N}\| {x}_{i,k}-x^*\|^2 \leq  N \gamma^{2k}e_0^2
	\ee
	and
	\be\label{ineq:linear_reg2} \| {x}_{i,k}-x^*\|^2 \leq \Gamma^2 \gamma^{2k} e_0^2\ee for any sequence $\{x_{i,k}\}$ generated by Algorithm \ref{alg:proj_sub}. 
\end{theorem}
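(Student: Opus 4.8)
The plan is to prove the two bounds \eqref{ineq:linear_1} and \eqref{ineq:linear_reg2} together by induction on $k$, treating \eqref{ineq:linear_1} as the primary invariant and deducing \eqref{ineq:linear_reg2} from it at the end. I write $S_k := \sum_{i=1}^N\|x_{i,k}-x^*\|^2$ and $R_k := \|\bar{x}_k-x^*\|$, and I will lean heavily on the orthogonal decomposition $S_k = N R_k^2 + \|\Delta_k\|^2$, obtained by expanding $x_{i,k}-x^*=(x_{i,k}-\bar{x}_k)+(\bar{x}_k-x^*)$ and using $\sum_i(x_{i,k}-\bar{x}_k)=0$. This identity is what lets me move between the aggregate $S_k$ (which the algorithm controls agent-by-agent) and the averaged $R_k$ (the only quantity on which the \emph{global} sharpness property \eqref{def:sharp} can act). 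The base case $k=0$ is immediate: the first two initialization constraints give $S_0\le Ne_0^2$ and $\|x_{i,0}-x^*\|^2\le\Gamma^2e_0^2$, once one checks from the definition of $e_0$ in \cref{assup:geo_step} that $e_0\ge\sqrt{\tfrac1N\sum_i\|x_{i,0}-x^*\|^2}$.

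For the one-step recursion I would apply \cref{lem:property_projection} to $x_{i,k+1}=\Proj_{\mathcal{X}}(v_{i,k}-\alpha_k g_{i,k})$ with $y=x^*\in\cX$, giving $\|x_{i,k+1}-x^*\|^2\le\|v_{i,k}-x^*\|^2-2\alpha_k\langle g_{i,k},v_{i,k}-x^*\rangle+\alpha_k^2\|g_{i,k}\|^2$. Since $g_{i,k}\in\partial f_i(v_{i,k})$, the prox-regular inequality \eqref{ineq:subg_weak_cvx} yields $\langle g_{i,k},v_{i,k}-x^*\rangle\ge f_i(v_{i,k})-f_i(x^*)-\tfrac{\rho}{2}\|v_{i,k}-x^*\|^2$. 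Summing over $i$, using $\|g_{i,k}\|\le L$, the convexity bound $\sum_i\|v_{i,k}-x^*\|^2\le S_k$ and the consensus bound $\sum_i\|v_{i,k}-\bar{x}_k\|\le\sqrt N\|\Delta_k\|$ (both from \cref{assup:graph2} and convexity of $\|\cdot\|^2$), then transferring $f_i(v_{i,k})$ to $f_i(\bar{x}_k)$ by $L$-Lipschitzness and finally invoking sharpness \eqref{def:sharp} at $\bar{x}_k$ (legitimate because $R_k\le\gamma^k e_0\le B/\Gamma\le B$ keeps $\bar{x}_k\in\mathcal{B}$), I arrive at $S_{k+1}\le(1+\rho\alpha_k)S_k-2\alpha_k N\beta R_k+2\alpha_k L\sqrt N\|\Delta_k\|+NL^2\alpha_k^2$. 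Crucially, $\|\Delta_k\|=\mathcal{O}(\alpha_k)=\mathcal{O}(\gamma^k)$ by \cref{lem:estm_geo_step}, with the explicit constant $\hat C=\tfrac{c}{\lambda}\|\Delta_0\|+\mathcal{O}(\mu_0)$ from \eqref{ineq:estimation_of_Delta_2}, so every perturbation term is $\mathcal{O}(\gamma^{2k})$, matching the target order $N\gamma^{2(k+1)}e_0^2$.

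To close the induction for \eqref{ineq:linear_1} I would split on the size of $R_k$. If $R_k\ge\tfrac1\Gamma\gamma^k e_0$, the sharpness term contributes at least $2\alpha_k N\tfrac\beta\Gamma\gamma^k e_0$; substituting $\alpha_k=\mu_0\gamma^k$, $S_k\le N\gamma^{2k}e_0^2$, $\|\Delta_k\|\le\hat C\gamma^k$, dividing by $N\gamma^{2k}$ and bounding $\gamma^k\le1$, the desired $S_{k+1}\le N\gamma^{2(k+1)}e_0^2$ reduces, after conservatively replacing the $\|\Delta_0\|$-coefficient $L$ by $L+\beta$, to $(1-\gamma^2)e_0^2\le\mu_0 q-\mathcal{O}(\mu_0^2)$; this is exactly where the positivity of $q$ and the upper bound on $\mu_0$ in \cref{assup:geo_step} enter, and since $1-\gamma^2=1-\lambda^{2\delta}\to0$ as $\delta\to0$ while $q>0$ is fixed, a sufficiently small $\delta$ makes it hold. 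If instead $R_k<\tfrac1\Gamma\gamma^k e_0$, then $S_k=NR_k^2+\|\Delta_k\|^2<\tfrac{N}{\Gamma^2}\gamma^{2k}e_0^2+\hat C^2\gamma^{2k}$, and dropping the (nonpositive) sharpness term while using $\Gamma\ge\sqrt2$ so that $\tfrac{N}{\Gamma^2}\le\tfrac N2$, the slack up to $N\gamma^2e_0^2\approx Ne_0^2$ absorbs the $\mathcal{O}(\mu_0)$ and $\hat C^2$ terms. Finally \eqref{ineq:linear_reg2} follows from \eqref{ineq:linear_1} by $\|x_{i,k}-x^*\|\le\|\Delta_k\|+R_k\le\hat C\gamma^k+\gamma^k e_0$, which is at most $\Gamma\gamma^k e_0$ once the initialization forces $\hat C\le(\Gamma-1)e_0$.

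The hard part will be the simultaneous calibration inside the inductive step: the geometric stepsize must make the sharpness-driven decrease (weakened by the factor $1/\Gamma$ in the worst averaging direction) strictly dominate the weak-convexity inflation $\rho\alpha_k S_k$, the consensus perturbation $\alpha_k L\sqrt N\|\Delta_k\|$, and the $\mathcal{O}(\alpha_k^2)$ noise, uniformly over all $k$. This creates the central tension that $\gamma=\lambda^{\delta}$ be strictly below $1$ (for contraction) yet close enough to $1$ that $1-\gamma^2$ fits beneath $\mu_0 q$ --- resolved by shrinking $\delta$ --- while the consensus constant $\hat C$, which itself depends on $\delta$ through \cref{lem:estm_geo_step}, must stay small enough that both $\hat C\le(\Gamma-1)e_0$ and the region-membership $\bar{x}_k\in\mathcal{B}$ persist throughout the induction.
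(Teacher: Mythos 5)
Your proposal is correct, and while it shares the paper's overall skeleton---induction on both invariants, the same one-step recursion built from \cref{lem:property_projection}, the weak-convexity inequality \eqref{ineq:subg_weak_cvx}, Lipschitz transfer from $v_{i,k}$ to $\bar{x}_k$, and sharpness applied at $\bar{x}_k$, plus the consensus estimate \eqref{ineq:estimation_of_Delta_2}, positivity of $q$, and the final shrink-$\delta$ argument---it resolves the central difficulty by a genuinely different device. The paper converts the average-point decrease $-2N\beta\alpha_k\|\bar{x}_k-x^*\|$ into per-agent terms $-2\beta\alpha_k\sum_i\|x_{i,k}-x^*\|$ via the triangle inequality (which is why its $q$ carries the coefficient $L+\beta$ on $\|\Delta_0\|$), and then lower-bounds the resulting expression $\sum_i\bigl((1+\rho\mu_0)\|x_{i,k}-x^*\|^2-2\beta\alpha_k\|x_{i,k}-x^*\|\bigr)$ by the Lagrangian-duality \cref{lem:lemma_used_in_local_rate}, whose worst case (mass concentrated at the per-agent cap $\Gamma e_0\gamma^k$) produces the $1/\Gamma$ factor. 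You instead keep the sharpness term as $-2N\beta\alpha_k R_k$ and split on $R_k$: when $R_k\geq \frac{1}{\Gamma}\gamma^k e_0$ the decrease already matches the paper's worst-case bound, and when $R_k<\frac{1}{\Gamma}\gamma^k e_0$ the identity $S_k=NR_k^2+\|\Delta_k\|^2$ together with the consensus bound $\|\Delta_k\|=\mathcal{O}(\gamma^k)$ forces $S_k$ below roughly $\frac{N}{2}\gamma^{2k}e_0^2$, so the target at $k+1$ holds with no decrease at all (this is where you invoke $\Gamma\geq\sqrt{2}$ inside the induction, whereas the paper needs it only in the final verification of \eqref{ineq:linear_reg2}). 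Your dichotomy buys two things: it eliminates the auxiliary optimization lemma entirely, and it makes explicit why cancellation in the average---$R_k$ small while the individual $\|x_{i,k}-x^*\|$ are large---cannot occur: that configuration is excluded by consensus rather than by the induction hypothesis, a point the paper's duality route leaves implicit; it also gives the marginally better coefficient $L$ instead of $L+\beta$ in $q$, which you correctly note can be relaxed back to the paper's $q$. What it costs is the additional constant-chasing in your second case, namely $\hat{C}^2/N+\mathcal{O}(\mu_0)\leq(\gamma^2-\tfrac{1}{2})e_0^2$, but this does close under \cref{assup:geo_step}: the bound on $\|\Delta_0\|$ gives $\frac{c}{\lambda}\|\Delta_0\|\leq\frac{e_0}{2\Gamma}$ (using $\beta\leq L$), and the bound on $\mu_0$ keeps the $\delta$-dependent part of $\hat{C}$ of order $e_0/20$, exactly as in the paper's verification of \eqref{ineq:linear_reg2}---so the same assumptions, used in the same places, suffice for your route.
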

The following comments about the theorem are in order:
 \begin{enumerate}
 	\item [(1)] The convergence rate $\gamma=\lambda^\delta$ is the same as the decaying rate of stepsize. But it cannot be smaller than $\lambda$, which is the convergence rate of the consensus. 
 	\item[(2)] For the centralized subgradient method\cite{davis2018subgradient}, the local linear rate is established in the tube 
\[ \mathcal{T}=\{ x: \dist(x,\cX_*)\leq \frac{2\beta}{\rho} \},\]
where $\cX_*$ is the set of the  local sharp minimizers. In \cref{thm:linear_rate}, the initialization constraints ensure that the individual initial points are close enough to each other as well as a sharp minimizer (local convergence). Moreover, since we use the local sharpness property, the local region should be included in $\mathcal{B}$.   
 	\item [(3)] An immediate corollary of \cref{thm:linear_rate} is that $\|\bar{x}_k-x^*\|^2\leq 1/N\sum_{i=1}^{N}\|x_{i,k}-x^*\|^2\leq  \gamma^{2k}e_0^2$ under the same conditions.
 	\item [(4)] If $f_i(x)$ is convex, i.e., $\rho=0$, then the condition   $e_0\leq \frac{\beta}{\Gamma\rho}$   can be removed. The weak convexity parameter $\rho$   restricts the initialization region, which is also clearly stated for centralized subgradient method\cite{davis2018subgradient}. 
 \end{enumerate}

\subsection{Distributed Projected Stochastic Subgradient Method}
In some problems, the function $f_i(x)$ at local agent is given by 
$f_i(x)= \frac{1}{m_i}\sum_{j=1}^{m_i} f_{i,j}$, where $m_i$ is a large number and  $f_{i,l}$ is $\rho-$weakly convex.  Therefore, it is expensive to compute the subgradient of $f_i(x)$ in each iteration. In contrast to the algorithm \eqref{alg:dis_subgrad_orgin},  the distributed  stochastic projected subgradient method iterates as follows
\be\label{alg:sto_proj_sub}  x_{i,k+1} = \Proj_{\mathcal{X}}\left( v_{i,k} - \alpha_k \xi_{i,k}\right),   \ee
where $\alpha_k>0$ is the stepsize,    
and  $\xi_{i,k}$ satisfies $\E \xi_{i,k} \in \partial f_i(v_{i,k}).$ In practice, for each $i$, we uniformly randomly select index $i_l\in\{1,2,\ldots,m_i\}$ and set $\xi_{i,k} \in \partial f_{i_l}(v_{i,k})$. 
We also assume that $\E \normtwo{\xi_{i,k} }^2 \leq L^2$ for all $i,k$, which is standard as in \cite{davis2019stochastic}. 
We have the following convergence result for distributed projected stochastic subgradient method \eqref{alg:sto_proj_sub}. The proof is given  in the Appendix. 
\begin{theorem}\label{thm:convergence_stoDPSM}
	Let $t<\frac{1}{2\rho}$ and  $\{ x_{i,k}\}$ be the sequence of algorithm \eqref{alg:sto_proj_sub}.
	Under  \cref{assup:graph1,assup:graph2,assup:graph4,assup:graph3,assup:bound_subgrad,assup:stepsize}, 
	\[\lim\limits _{T\rightarrow \infty} \inf_{k\leq T}\E\|\nabla \varphi_t(\bar{x}_k)\| = 0.\]
\end{theorem}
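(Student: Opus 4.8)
\textbf{Proof proposal for Theorem \ref{thm:convergence_stoDPSM}.}

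The plan is to adapt the one-step improvement analysis behind \cref{thm:convergence_1} to the stochastic setting by taking conditional expectations at each iteration. First I would reproduce the argument of \cref{lem:one-step}, but now with the stochastic subgradient $\xi_{i,k}$ in place of the deterministic $g_{i,k}$. The key observation is that $\E[\xi_{i,k}\mid \mathcal{F}_k] \in \partial f_i(v_{i,k})$, so wherever the deterministic proof invokes the weak-convexity inequality \eqref{ineq:subg_weak_cvx} with an actual subgradient, the stochastic proof does the same after conditioning on the natural filtration $\mathcal{F}_k$ generated by $\{x_{i,l}: l\le k, i\in[N]\}$. Using \cref{lem:property_projection} to bound $\|x_{i,k+1}-\hat v_{i,k}\|^2$ and the bound $\E\|\xi_{i,k}\|^2\le L^2$ to control the quadratic stepsize term, I expect to obtain a conditional-expectation version of \eqref{ineq:one_step}, namely
\begin{align*}
\E\Big[\sum_{i=1}^N\|x_{i,k+1}-\hat v_{i,k}\|^2\,\Big|\,\mathcal{F}_k\Big]
&\le \sum_{i=1}^N\|v_{i,k}-\hat v_{i,k}\|^2
 + 2\alpha_k N\big(-\tfrac{1}{2t}+\rho\big)\|\bar x_k-s_k\|^2\\
&\quad+ C_1\alpha_k\sum_{i=1}^N\|x_{i,k}-\bar x_k\|
 + C_2\alpha_k\sum_{i=1}^N\|x_{i,k}-\bar x_k\|^2 + NL^2\alpha_k^2,
\end{align*}
with $C_1,C_2$ the same $L,t,\rho$-dependent constants as in \cref{lem:one-step}.

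Next I would convert this into a statement about the Moreau envelope. Exactly as in the proof of \cref{thm:convergence_1}, I would use $\varphi_t(\bar x_k)\le f(s_k)+\frac{1}{2t}\|s_k-\bar x_k\|^2$ and relate $\sum_i\|v_{i,k}-\hat v_{i,k}\|^2$ and $\sum_i\|x_{i,k+1}-\hat v_{i,k}\|^2$ to $\varphi_t$ at consecutive averages, together with the identity $\|\nabla\varphi_t(\bar x_k)\|=\frac{1}{t}\|\bar x_k - \prox_{tf}(\bar x_k)\|$ from \eqref{def:surrogate measure} and the Lipschitz property of the proximal map (\cref{lem:nonexpansive:proximal_map}). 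This should yield a supermartingale-type recursion of the form
\[
\E[\varphi_t(\bar x_{k+1})\mid\mathcal{F}_k]
\le \varphi_t(\bar x_k)
 - t(1-2t\rho)\alpha_k\,\|\nabla\varphi_t(\bar x_k)\|^2
 + \alpha_k\, r_k + \tfrac{L^2\alpha_k^2}{2t},
\]
where the residual $r_k$ aggregates the consensus-error terms and satisfies, after taking full expectation and invoking \cref{lem:consensus} (so that $\E\|\Delta_k\|=\mathcal{O}(L\alpha_k/(1-\lambda))$), the summability $\sum_k\alpha_k\,\E[r_k]<\infty$ together with $\sum_k\alpha_k^2<\infty$ not being required because the $\alpha_k r_k$ term is itself $\mathcal{O}(\alpha_k^2)$.

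Finally I would sum the recursion from $k=0$ to $T-1$, take full expectations, and use that $\varphi_t$ is bounded below (a consequence of \cref{assup:bound_subgrad}). Telescoping gives
\[
t(1-2t\rho)\sum_{k=0}^{T-1}\alpha_k\,\E\|\nabla\varphi_t(\bar x_k)\|^2
\le \varphi_t(\bar x_0)-\inf\varphi_t + \sum_{k=0}^{T-1}\Big(\alpha_k\E[r_k]+\tfrac{L^2\alpha_k^2}{2t}\Big),
\]
and since the right-hand side is bounded uniformly in $T$ while $\sum_k\alpha_k=\infty$ by \cref{assup:stepsize}, a standard argument forces $\inf_{k\le T}\E\|\nabla\varphi_t(\bar x_k)\|^2\to 0$, and hence $\inf_{k\le T}\E\|\nabla\varphi_t(\bar x_k)\|\to 0$ by Jensen. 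The main obstacle I anticipate is the first step: carefully propagating the conditional expectation through the weak-convexity inequality \eqref{ineq:subg_weak_cvx} at the random point $v_{i,k}$, making sure that $\E[\xi_{i,k}\mid\mathcal{F}_k]\in\partial f_i(v_{i,k})$ is applied to the linear term while the second-moment bound $\E\|\xi_{i,k}\|^2\le L^2$ handles the quadratic term, without introducing spurious cross terms. The subtlety is that $v_{i,k}$ is $\mathcal{F}_k$-measurable, so conditioning is legitimate, but one must verify that the Moreau-envelope descent inequality survives the replacement of a deterministic subgradient by an unbiased stochastic one, which is precisely where the bounded-second-moment assumption rather than a bounded-subgradient assumption becomes essential.
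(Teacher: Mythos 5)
Your proposal is correct and follows essentially the same route as the paper: condition on the filtration at iteration $k$, use $\E[\xi_{i,k}\mid\mathcal{F}_k]\in\partial f_i(v_{i,k})$ for the linear term and $\E\|\xi_{i,k}\|^2\le L^2$ for the quadratic term in the projection inequality, and then repeat the telescoping argument of \cref{thm:convergence_1} verbatim, finishing with Jensen's inequality. The paper's own proof is exactly this (it merges the Moreau-envelope substitution and the conditional-expectation step into a single display and then declares the rest identical to \cref{thm:convergence_1}), so there is no substantive difference.
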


\section{Numerical Experiment}
We conduct simulations on robust phase retrieval problem 
\be\label{prob:robust_ph_r} \min_{x\in\R^n} f(x) = \frac{1}{N} \sum_{i=1}^N (\frac{1}{m} \sum_{j=1}^{m} \abs{\inp{w_{i,j}}{x} ^2-y_{i,j} }). \ee
The problem is to recover the random signal $\tilde{x}\in\R^n$ using  Gaussian measurements $w_{i,j}$. 
We generate the measurements $w_{i,j}$ and the observations  $y_{i,j}$ following the work \cite{duchi2019solving}. For simplicity, we only consider the noiseless case.  More specifically, the ground truth $\tilde{x}$ is drawn from $N(0,I_n)$ and   $y_{i,j}=\inp{w_{i,j}}{\tilde{x}}^2$, where $w_{i,j}$  are i.i.d standard Gaussian random variables.  As suggested by \cite{duchi2019solving}, the recovery rate is $100\%$ when $N\times m\geq 2.7n$  for the proximal linear algorithm. Therefore, we  use  $N\times m \geq 3n$ for subgradient method in all tests. The initialization follows from  the procedure proposed in \cite[Section 4.2]{duchi2019solving} and we set $x_{1,0}=x_{2,0}=\ldots=x_{N,0}$. 

\begin{figure}
	\centering\includegraphics[width=0.8\linewidth]{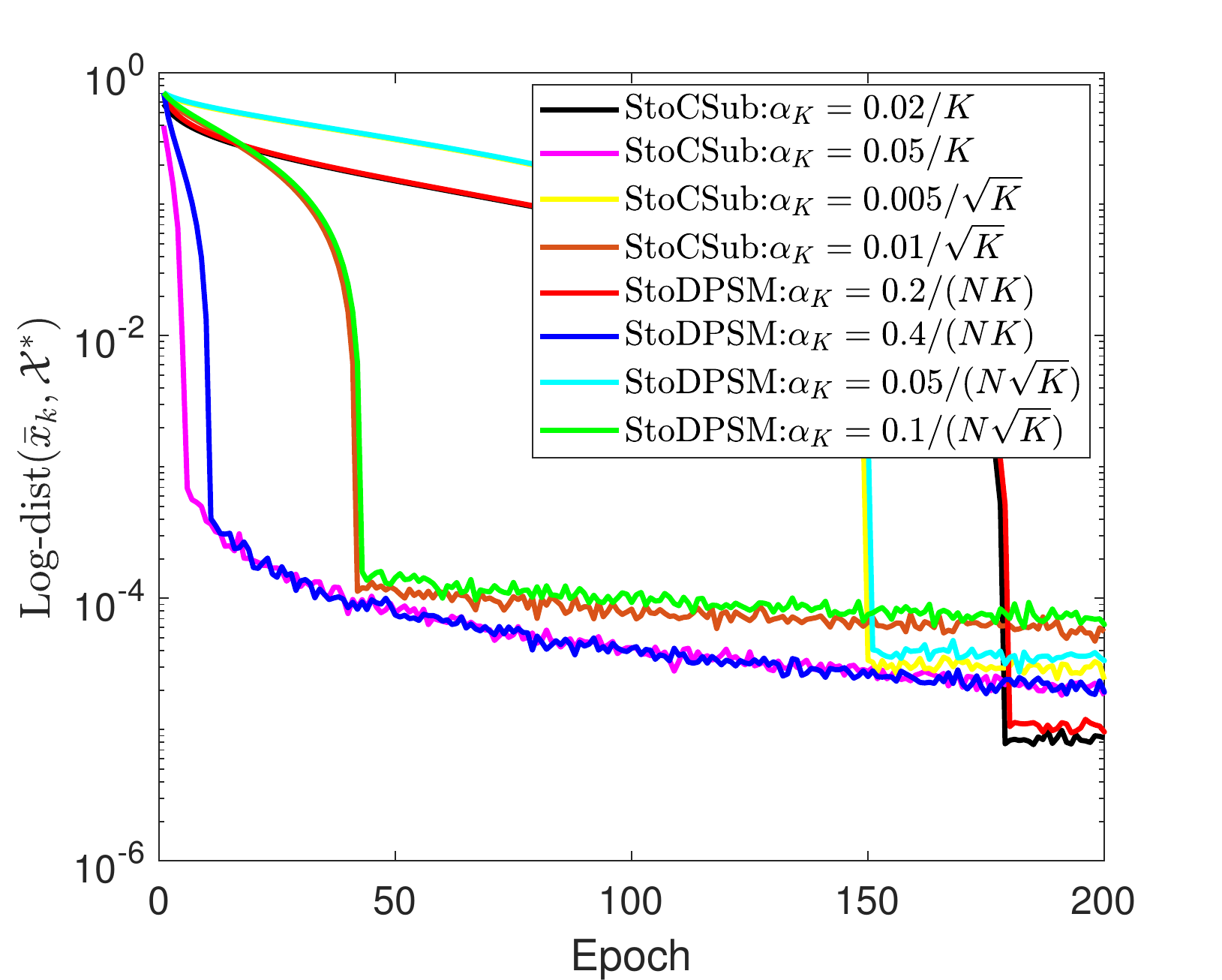}
	\caption{ Stochastic DPSM. $n=100, N = 10, m =1000$. }\label{fig:sto_DPSM}
\end{figure}
 
 The robust phase retrieval formulation \eqref{prob:robust_ph_r} was shown to be weakly convex \cite{duchi2019solving} and have sharpness property w.h.p under mild probabilistic assumptions in \cite{eldar2014phase,duchi2019solving}.
  Different from the  \cref{def:sharpness}, the sharpness condition is given by  
 \[ f(x) - \min f \geq \kappa \|x-\tilde{x}\| \|x+\tilde{x}\|, \]
 where $\kappa>0$ is some number. Hence,   $\pm\tilde{x}$ are also the global minimizers.
 
The global minimizers set is $\{\tilde{x},-\tilde{x}\}$. 
According to \cite[Lemma 3.1]{davis2017nonsmooth}, there is no other critical points in the tube $\{x:\dist(x,{\cX}^*)\leq\frac{2\beta}{\rho}\}$. Since $0$ is also a critical point to the population function $f_P(x)=\E_a[|\inp{a}{x}^2 - \inp{a}{\tilde{x}}^2|]$\cite[Theorem 5.1]{davis2017nonsmooth}. We have $\{x:\|x-\tilde{x}\|\leq \frac{2\beta}{\rho}\}\cap \{x:\|x+\tilde{x}\|\leq\frac{2\beta}{\rho}\}=\emptyset$. To satisfy \cref{def:sharpness}, we let $\beta = \kappa\|\tilde{x}\|$ and choose $\mathcal{B} = \{x:\|x-x^*\|\leq \frac{2\beta}{\rho}\}$, where $x^*$ is $\tilde{x}$ or $-\tilde{x}$ and the sign is decided by the initialization. 
 
\textbf{Synthetic data} First, we solve the robust phase retrieval problem \eqref{prob:robust_ph_r} by stochastic DPSM using diminishing stepsize. We generate an Erd\"{o}s-R\'{e}nyi model $\mathsf{ER}(N, 0.3)$ and $A(k)=A$ is time-invariant Metropolis Hasting matrix associated with the graph. Therefore, we have $\lambda$ is the second largest singular value of $A$ in \cref{lem:linear_rate_of_Phi}.  In each epoch $K$, the stepsize is set to $\alpha_K=\mathcal{O}(1/K)$ or $\alpha_K=\mathcal{O}(1/\sqrt{K})$.  We plot the log distance v.s. epoch  $K$ in \cref{fig:sto_DPSM}. We also compare stoDPSM with the stochastic centralized subgradient method(StoCSub)\cite{davis2019stochastic}. To make a fair comparison, we set the mini-batch size in StoCSub to $N=10$. We tune the stepsize such that the best performance is achieved. We see that the convergence of StoDPSM is comparable with StoCSub in the epoch.

\begin{figure*}[ht]
	\begin{center}
		\minipage{0.24\textwidth}
		\subfigure[convergence, $p=0.1$]{
			{\includegraphics[width=0.98\linewidth]{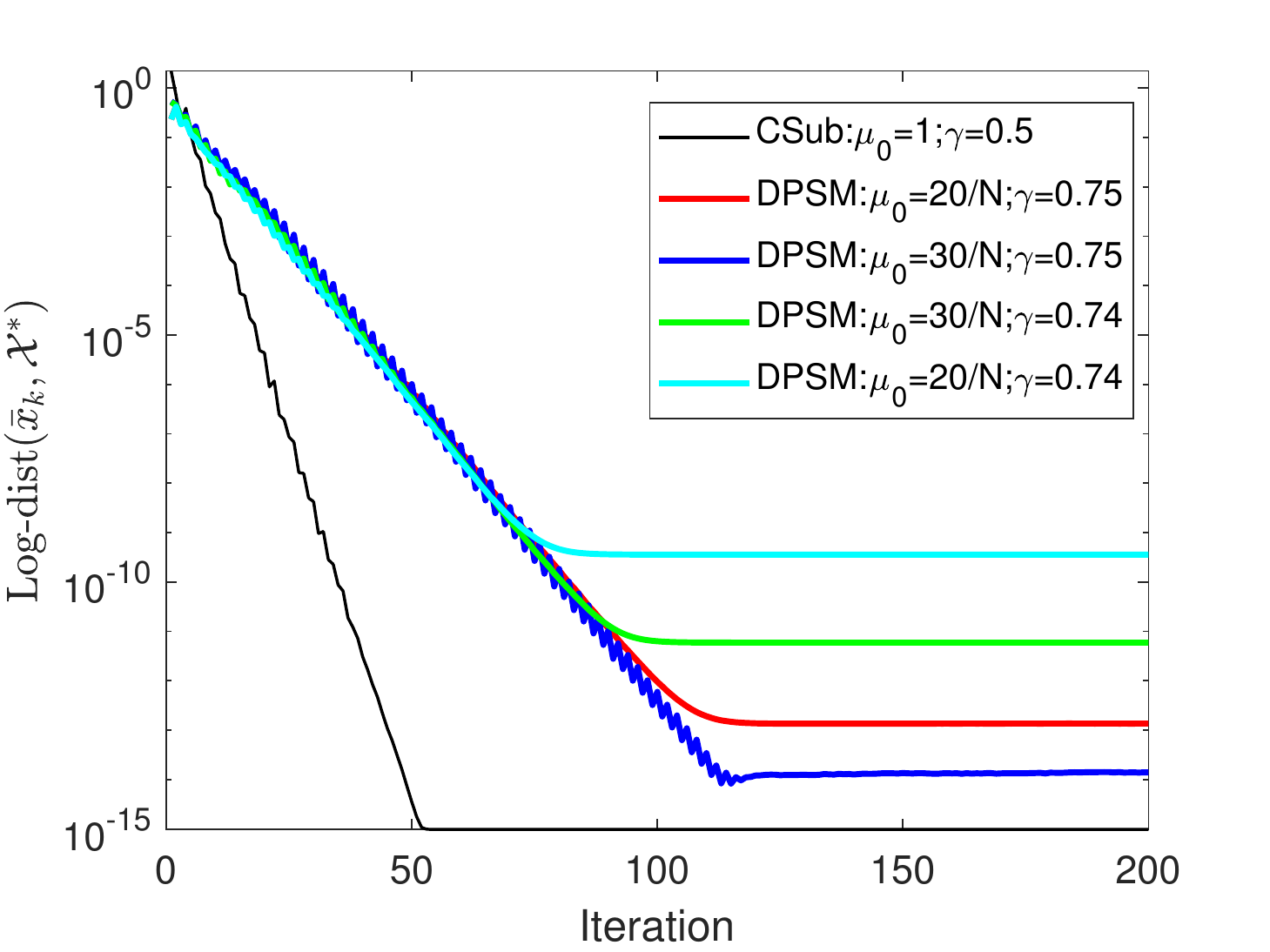}} }
		\endminipage\hfill
		\minipage{0.24\textwidth}
		\subfigure[$\sigma_2(k)$, $p=0.1$]{	
			{\includegraphics[width=0.98\linewidth]{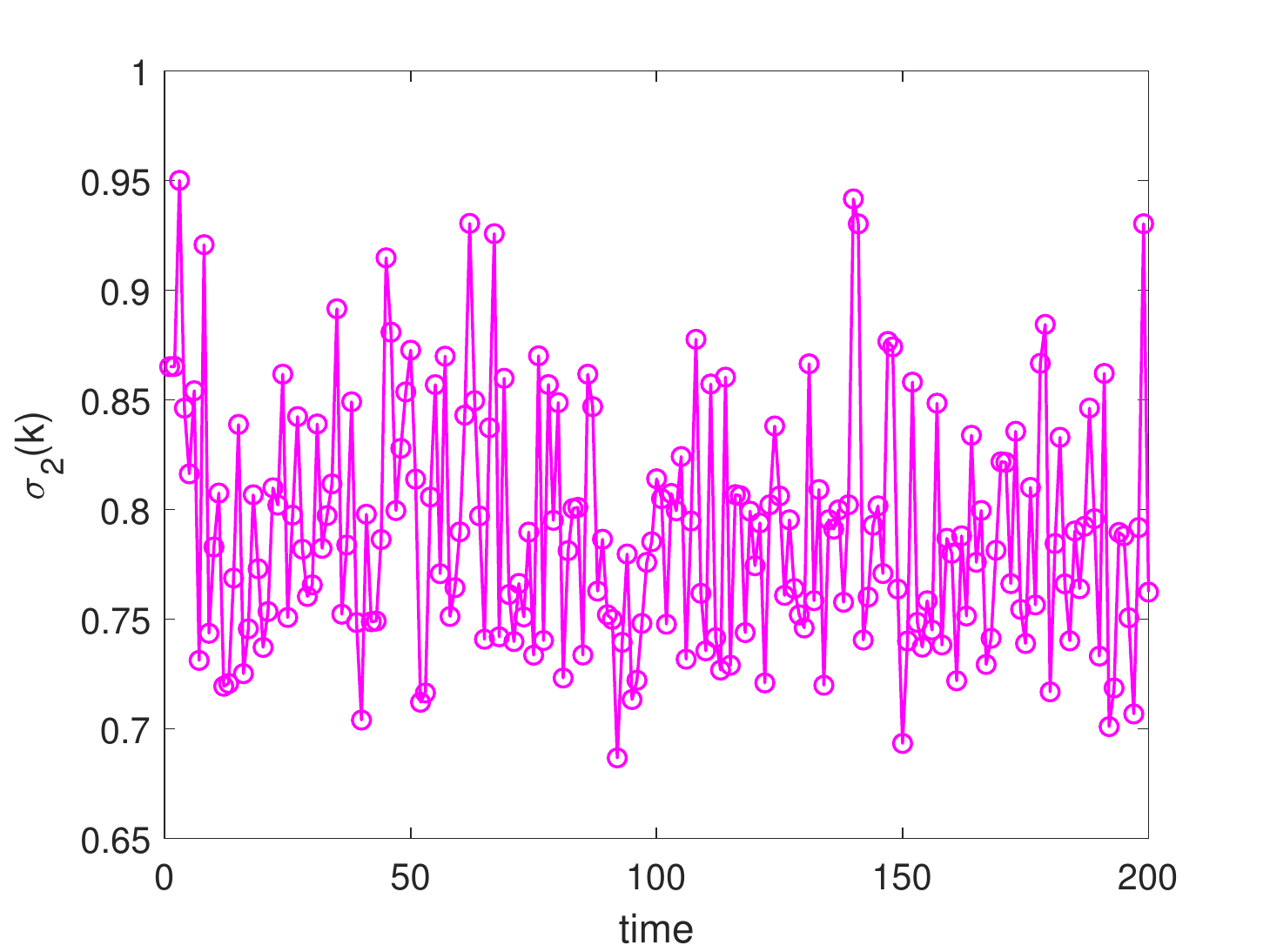}}}
		\endminipage\hfill	
	\minipage{0.24\textwidth}
		\subfigure[convergence, $p=0.2$]{
			{\includegraphics[width=0.98\linewidth]{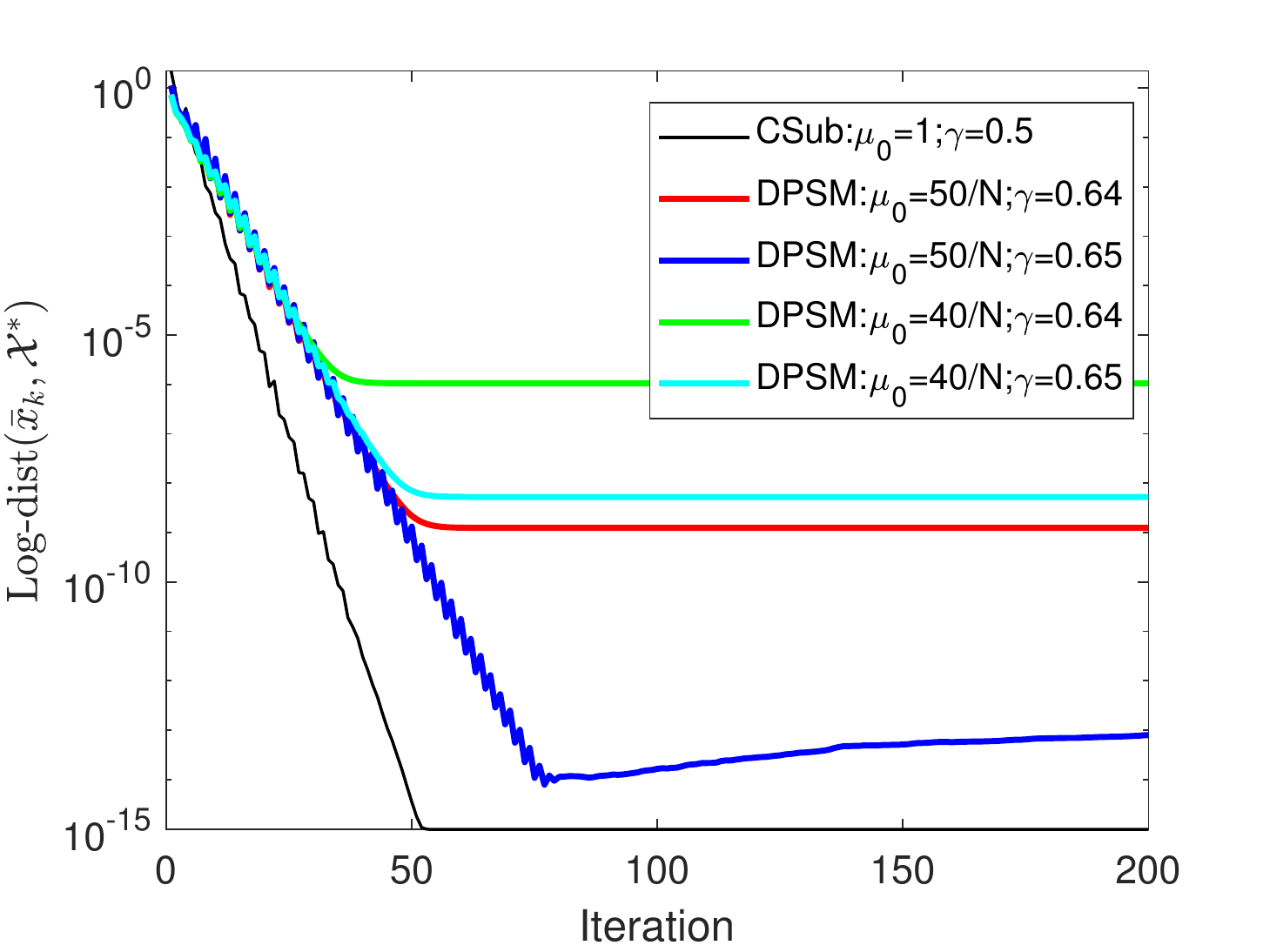}} }
		\endminipage\hfill
		\minipage{0.24\textwidth}
		\subfigure[$\sigma_2(k)$, $p=0.2$]{	
			{\includegraphics[width=0.98\linewidth]{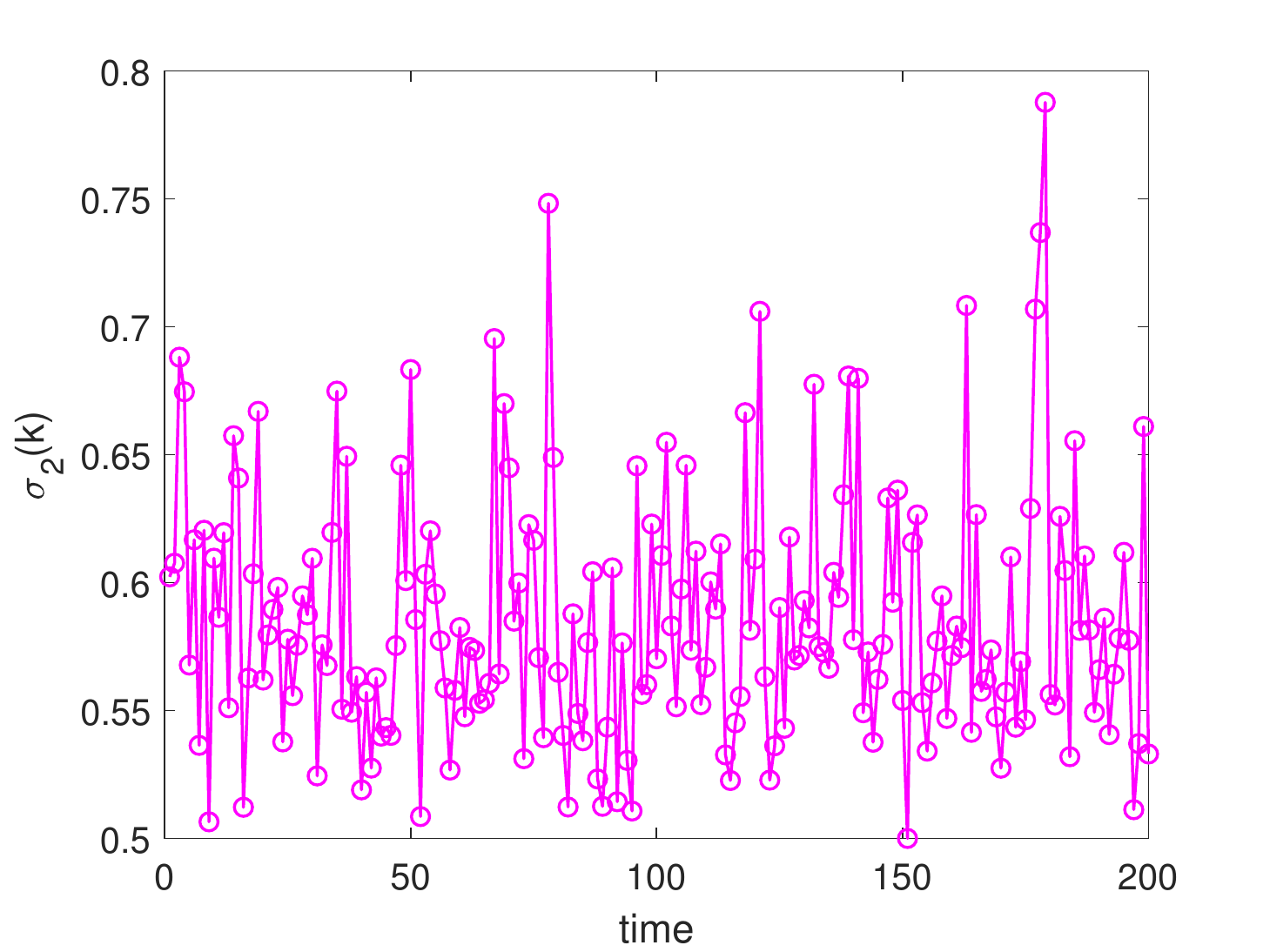}}}
		\endminipage\hfill	
		\caption{Linear rate with different stepsize. Data size: $n=100, N = 100, m =50$.   }\label{fig:linear_syn}
		\label{figure:drgta}
	\end{center}
	\vskip -0.1in
\end{figure*}
 Next, we demonstrate the linear rate of DPSM. The data size is fixed with $N=100, n=100, m = 50$. 
Like the centralized subgradient method\cite{davis2018subgradient}, $\mu_0$ and $\gamma$ should be tuned. In \cref{fig:linear_syn}, `CSub' represents the centralized subgradient method\cite{davis2018subgradient}. The graph is time varying. Specifically, we generate an Erd\"{o}s-R\'{e}nyi model $\mathsf{ER}(100, p)$  and Metropolis Hasting matrix associated with the graph  at each iteration. In \cref{fig:linear_syn} (a) and (b), the probability $p$ is $0.1$.  We demonstrate the linear convergence of CSub and DPSM with different stepsize in \cref{fig:linear_syn} (a). We see that $\gamma=0.5$ works for CSub but not for DPSM, since the smallest $\gamma$ is $0.75$ for $\mu_0=30/N$. For $\mu_0=20/N, \gamma=0.75$, DPSM does not converge to the same precision as $\mu_0=30/N$, so the largest $\mu_0$ may be $30/N$.  This indicates that convergence rate of DPSM is slower than CSub. In \cref{fig:linear_syn} (b), we plot the $\sigma_2(k)$ w.r.t the iteration, where $\sigma_2(k)$ is the second largest singular value of the matrix $A(k)$. We see that most singular values $\sigma_2(k)$ are larger than 0.7.   And $\gamma=0.75>0.7$ also demonstrates that the convergence rate cannot be faster than consensus.   In \cref{fig:linear_syn} (c) and (d), we show the similar results for $p=0.2$. Since the connectivity is stronger, we find that smaller  $\gamma=0.65$ can guarantee the linear rate. Although DPSM is not faster than CSub in the iteration number, DPSM has the advantage of parallel computation. And if the data number $m\times N$ is large, the computation of the whole subgradient is not affordable. We also test the case $m=1$, i.e., there is only single data at each node. In this case, the graph is time-invariant. We fix the graph following $\mathsf{ER}(400,0.3)$. However,  the smallest $\gamma=0.985$ is observed, which is away from   $\lambda = 0.28067$ compared with  \cref{fig:linear_syn}. This could be because single data in local node contributes little information about the sharpness. 
\begin{figure}
	\centering\includegraphics[width=0.8\linewidth]{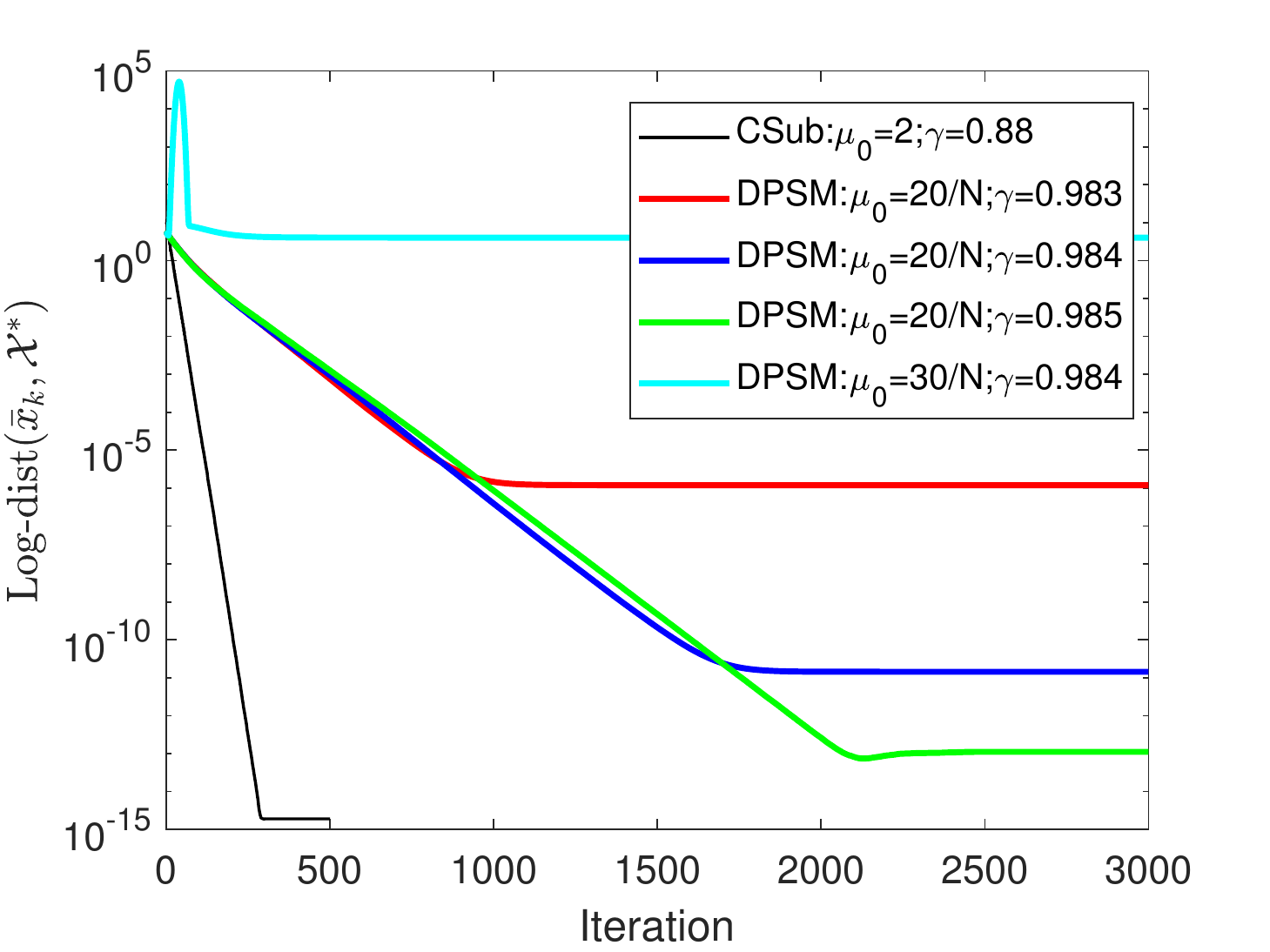}
	\caption{Linear rate with different stepsize. Data size: $n=100, N = 400, m =1$. $\lambda = 0.28067$. }\label{fig:linear_syn1}
\end{figure}

\textbf{Real-world image}
We  use digit images from the MNIST data set\cite{lecun1998gradient}. The gray image dimension is $n=28\times 28=784$ and we set $m=84, N=28$ so that the number of Gaussian measurements is $m\times N = 3\times n$. Other settings are the same as previous synthetic data. {We fix a graph following $\mathsf{ER}(28,0.3)$}. In \cref{fig:MINIST}, we show the original, initial guess and the recovered image. We see that the recovery is identical to the true image. The convergence plot of DPSM and stoDPSM is shown in \cref{fig:MINIST1}.

\begin{figure}
	\centering\includegraphics[width=0.99\linewidth]{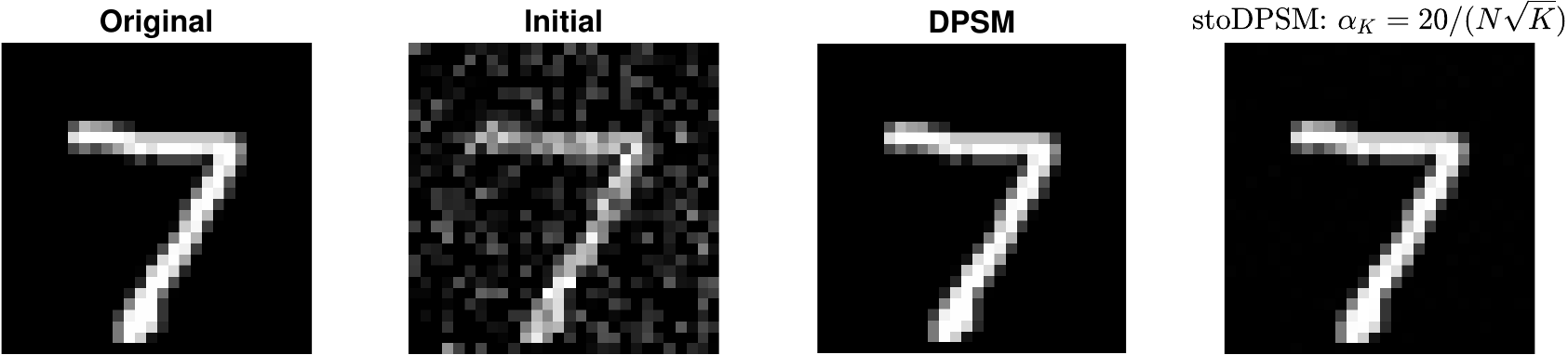}
	\caption{Digit recovery; the first one is the true digit, the second is the initial guess, the third is the digit produced by DPSM and the last is produced by stoDPSM with $\alpha_K=20/(N\sqrt{K})$. Data size: $n=784, N = 28, m =84$.  }\label{fig:MINIST}
\end{figure}

\begin{figure}
		\minipage{0.24\textwidth}
		\subfigure[DPSM]{
			{	\centering\includegraphics[width=0.95\linewidth]{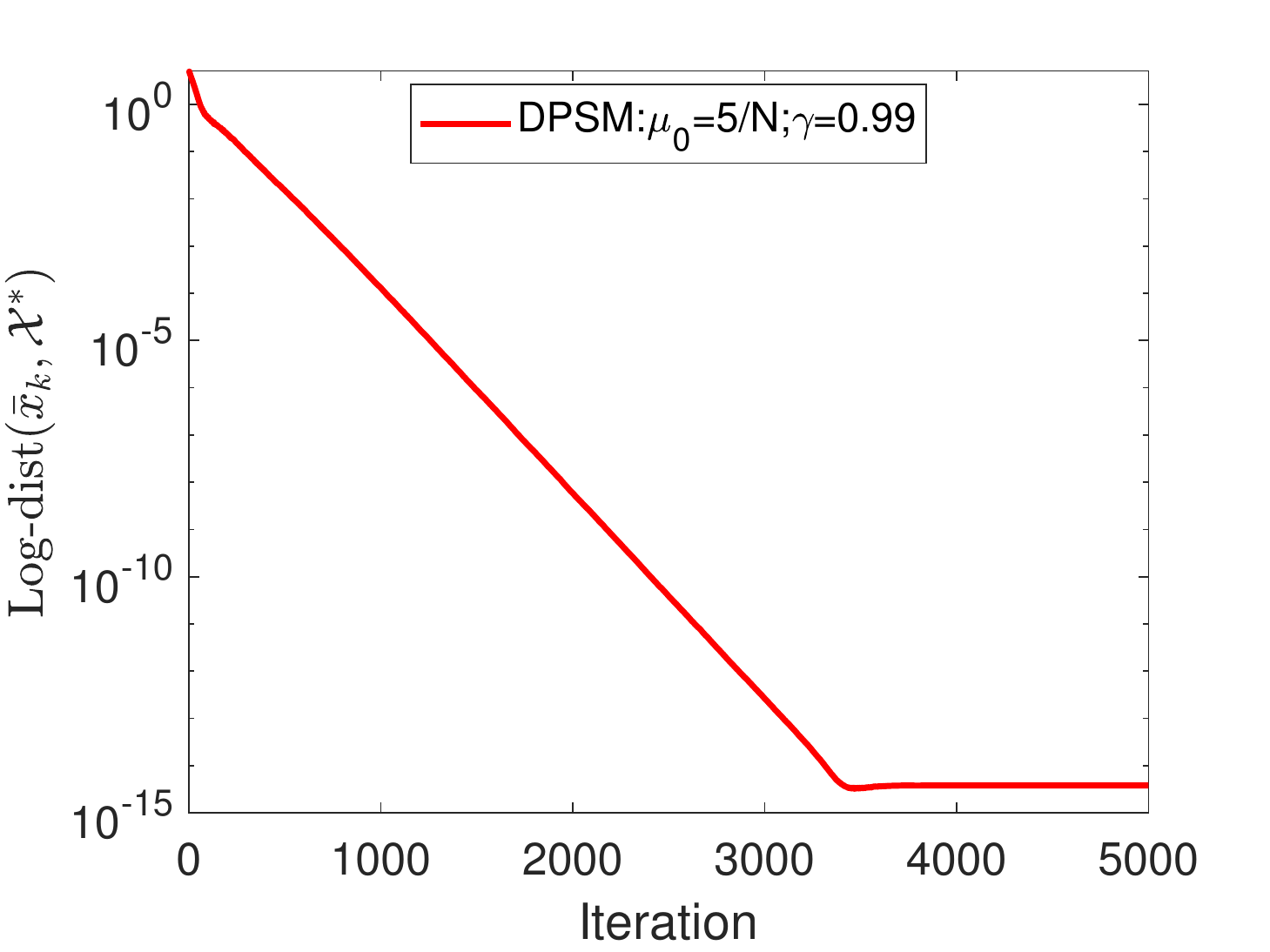} }}
		\endminipage\hfill
		\minipage{0.24\textwidth}
		\subfigure[stoDPSM]{	
			{\includegraphics[width=0.95\linewidth]{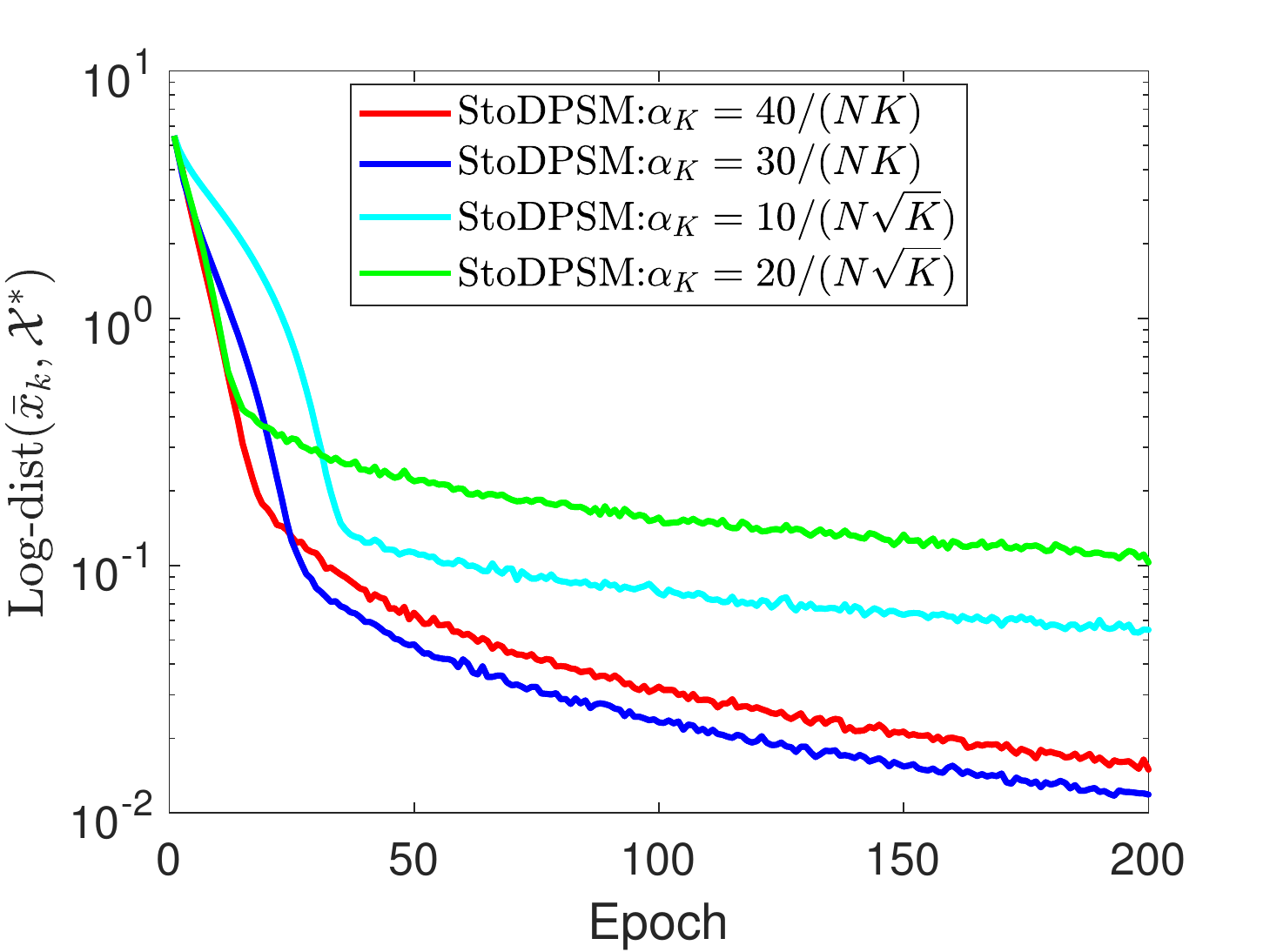}}}
		\endminipage\hfill	
	\caption{Linear rate of DPSM. MINIST Data: $n=784, N = 28, m =84$.  }\label{fig:MINIST1}
\end{figure}

\section{Conclusion}
We analyzed the distributed subgradient method for solving constrained weakly convex optimization. We presented the global convergence of the average point using the notion of Moreau envelope. Moreover, we proved a linear convergence rate under the sharpness property. Numerical results on robust phase retrieval illustrate our theory. 
 
A natural extension of this work is to consider the directed network. For example, the convergence of directed distributed subgradient method for convex problems was analyzed in \cite{xi2016distributed}. It will also be interesting to see whether it is possible to deal with  different constraints at each local node (see e.g., the convex constraints in \cite{nedic2010constrained}). Finally, it will be worth considering non-convex constraints (e.g., sphere constraint \cite{li2019nonsmooth}).


\appendix
 \begin{proof}[\textbf{Proof of \cref{lem:alt_weak_cvx}}]
 	We prove it by induction. For $m=2$, let $y=a_1x_1+a_2x_2$, where $a_1+a_2=1, a_1\geq 0$ and $ a_2\geq 0$. From the subgradient inequality \eqref{ineq:subg_weak_cvx}, we have
 	\[ f(x_1) \geq f(y) + \inp{\partial f(y)}{x_1-y} - \frac{\rho}{2}\|x_1-y\|^2 \]
 	and 
 	\[ f(x_2) \geq f(y) + \inp{\partial f(y)}{x_2-y} - \frac{\rho}{2}\|x_2-y\|^2.\]
 	Multiplying the two above inequalities by $a_1$ and $a_2$, respectively, and summing them, yield 
 	\[ a_1f(x_1) + a_2 f(x_2) \geq f(y) - \frac{\rho}{2} a_1a_2\|x_1-x_2\|^2.\]
 	Similarly, we also have 
 	\[ a_1 g(x_1) + a_2 g(x_2) \geq g(y) + \frac{\tau}{2} a_1a_2\|x_1-x_2\|^2.\]
 	Therefore, inequality \eqref{ineq:alt_def_weak_cvx} holds for $m=2$. Suppose they hold for $m=k$. For $m=k+1$,  let $z = \sum_{i=1}^{k+1} a_i x_i$ and $b = \sum_{i=1}^{k} a_i $. We have
 		\begingroup
	\allowdisplaybreaks
    \begin{align}
 	f(z) &=  f (b\sum_{i=1}^{k} \frac{a_i}{b}x_i+ a_{k+1}x_{k+1}) \notag\\
 	&\leq b f(\sum_{i=1}^{k} \frac{a_i}{b}x_i ) + a_{k+1}f(x_{k+1}) +\notag
 	\\ &\quad \frac{\rho}{2}  a_{k+1}b \| \sum_{i=1}^{k} \frac{a_i}{b}(x_i- x_{k+1})\|^2 \notag\\
 	&\leq b\left( \sum_{i=1}^k\frac{a_i}{b} f(x_i) + \frac{\rho}{2 }\sum_{i=1}^{k-1} \sum_{j= i+1}^k \frac{a_i a_j}{b^2} \|x_i-x_j\|^2\right)\notag \\
 	&\quad + a_{k+1}f(x_{k+1}) + \frac{\rho}{2}  a_{k+1}b \| \sum_{i=1}^{k} \frac{a_i}{b}(x_i- x_{k+1})\|^2,\label{ineq:derive_lem_wcvx}
    \end{align}
 	\endgroup
 	where the first inequality follows from  $b+a_{k+1}=1$ and the second  from assumption step.
 	Notice that since $\|\cdot\|^2$ is $2-$strongly convex, it follows from the assumption for strongly convex function that 
 	\[ 
 	\bad
 	&\|\sum_{i=1}^{k} \frac{a_i}{b}(x_i- x_{k+1})\|^2\\
 	& \leq \sum_{i=1}^{k} \frac{a_i}{b}\|x_i-x_{k+1}\|^2 - \sum_{i=1}^{k-1}\sum_{j=i+1}^k \frac{a_i a_j}{b^2}\|x_i- x_j\|^2.
 	\ead 
 	\]
 	Substituting it into \eqref{ineq:derive_lem_wcvx} yields
 	\[
 	\bad
 	f(z) 
 	&\leq  \sum_{i=1}^k {a_i}  f(x_i) \\
 	&\quad+ \frac{\rho}{2 }\sum_{i=1}^{k-1} \sum_{j= i+1}^k (\frac{a_i a_j}{b} - \frac{a_{k+1} a_i a_j}{b})\|x_i-x_j\|^2 \\
 	&\quad + a_{k+1}f(x_{k+1}) + \frac{\rho}{2}  \sum_{i=1}^{k} {a_i}a_{k+1}\| x_i- x_{k+1}\|^2\\
 	& =  \sum_{i=1}^{k+1} {a_i}  f(x_i) +\frac{\rho}{2 }\sum_{i=1}^{k} \sum_{j= i+1}^{k+1} a_ia_j\|x_i-x_j\|^2,
 	\ead
 	\]
 	where we use $a_{k+1} = 1-b$ in the   equality.
 	Therefore, inequality \eqref{ineq:alt_def_weak_cvx} holds for $m=k+1$. Using the same argument and noticing that $-\|\cdot\|^2$ is $2-$weakly convex, we have that \eqref{ineq:alt_def_strongly_cvx} also holds for $m=k+1$. Hence, we obtain  the desired results.
 \end{proof}

\begin{proof}[\textbf{Proof of  \cref{lem:linear_rate_of_Phi}}]
	It is  shown  in \cite[Proposition 1]{nedic2009distributed}  that
	there exist  $\eta$ such that 
	\[ \left| [\Phi(k,s)]_j^i - \frac{1}{N}\right| \leq  2\frac{1+\eta^{-B_0}}{1-\eta^{B_0}}(1-\eta^{B_0})^{(k-s)/B_0}\]
	for all $s$ and $k$ with $k\geq s$,
	where $[\Phi(k,s)]_j^i -$ denotes the $i-$th row and  $j-$th column element of $\Phi(k,s)$,  $B_0 = (N-1)B$ and $B$ is the intercommunication interval bound of \cref{assup:graph4}.  By using the matrix norm inequality
	\[ \|A\|_{\textit{op}}\leq \|A\|_F\leq N\|A\|_{\infty}  \]
	for any symmetric real matrix $A\in\R^{N\times N}$, where  $\|A\|_F$ is the Frobenius norm and  $\|A\|_{\infty}=\max_{i,j}\abs{[A]_j^i}$,
	we have the desired result, where $c=2N\frac{1+\eta^{-B_0}}{1-\eta^{B_0}}$ and $\lambda=(1-\eta^{B_0})^{B_0^{-1}}$.
\end{proof}

\begin{proof}[\textbf{Proof of  \cref{lem:finite_sum}}] This can be proved following the similar argument of \cite[Lemma 7]{nedic2010constrained}. 
	 Since $\lim_{T\rightarrow \infty}\gamma_T=0$, there exists $M>0$ such that $\gamma_k$ is uniformly bounded, i.e., $\gamma_k\leq M, \forall k\geq 0$.
	For each $T$,  we have $\lambda^{k}\leq \gamma_{T-1}$ for any $k\geq K_0(T):= \lceil \frac{\log \gamma_{T-1}}{\log \lambda} \rceil$. 
	It follows that
	\[ \bad
	&\quad \sum_{k=0}^{T-1}  \lambda^k\gamma_{T-k-1} \\
	&= \sum_{k=0}^{K_0(T)-1}  \lambda^k\gamma_{T-k-1} +\sum_{k=K_0(T)}^{T-1}  \lambda^k\gamma_{T-k-1}\\
	&\leq\frac{1}{1-\lambda} \cdot \max_{0\leq k\leq K_0(T)-1} \gamma_{T-k-1} +  \frac{\lambda^{K_0(T)}}{1-\lambda}\cdot M\\
	&\leq \frac{1}{1-\lambda} \left( \max_{0\leq k\leq K_0(T)-1} \gamma_{T-k-1} + M \gamma_{T-1}\right).
	\ead\]
Recall $\lim_{T\rightarrow \infty}\gamma_T=0$ and $\sum_T\gamma_T=\infty$.	It is clear that  $K_0(T) =  \lceil \frac{\log \gamma_{T-1}}{\log \lambda} \rceil = {o}(T)$, otherwise there exists a subsequence decreasing   geometrically, which contradicts with $\lim_{k\rightarrow \infty} \gamma_{k+1}/\gamma_k = 1$ and $\sum_T\gamma_T=\infty$. Then, we have 
	\[\lim_{T\rightarrow \infty}\frac{\max_{0\leq k\leq K_0(T)-1} \gamma_{T-k-1} }{\gamma_{T-1}} = 1.\]
	Therefore, $\sum_{k=0}^{T-1}  \lambda^k\gamma_{T-k-1} = \mathcal{O}(\frac{\gamma_{T-1}}{1-\lambda})$ holds for sufficiently large $T$ and thus we have the desired result.
\end{proof}

\begin{proof}[Proof of \cref{lemma:consensus1} and \cref{lem:consensus}]
	The      inequality \eqref{lem:consensun_ineq1} is the same as \cite[Lemma 8]{nedic2010constrained}. We provide the proof for completeness. 
	Without loss of generality, we assume $n=1$. Define 
	\be 
	\bad
	x_k&=[x_{1,k},x_{2,k},\ldots,x_{N,k}],\\
	v_k&=[v_{1,k},v_{2,k},\ldots,v_{N,k}],\\
	e_k&=[e_{1,k},e_{2,k},\ldots,e_{N,k}],
	\ead
	\ee
	where $e_{i,k}=  \Proj_{\mathcal{X}}(  v_{i,k} - \alpha_k g_{i,k} )- v_{i,k}$.
	The iteration \eqref{alg:proj_sub} can be  rewritten as
	\begin{equation}\label{alg:proj_sub_re}
	x_{k+1} = v_k + e_k = A(k)x_k + e_k.
	\end{equation}
	That is, the iteration is split into a linear term  $A(k)x_k$ and a nonlinear term $e_k$. Using \cref{lem:property_projection} and \cref{assup:bound_subgrad}, it follows that 
	\begin{equation}\label{ineq:bound_e_k}
	\bad
	\|e_{i,k}\|^2&\leq  \| v_{i,k} - \alpha_k g_{i,k} - v_{i,k}\|^2\leq \alpha_k^2 L^2.
	\ead
	\end{equation}
	Therefore, we have
	\begin{equation}\label{ineq:bound_e}
	\| e_k\| \leq \sqrt{N} L\alpha_k.
	\end{equation}
	Let $J = \frac{1}{N}\mathbf{11}^T,$ where $\mathbf{1}\in\R^N$ is a column vector with all elements $1$. Then, $\Delta_k = x_k - Jx_k$. We have
	\begin{equation}\label{ineq_derive_error_to_averge}
	\bad
	\Delta_{k+1} &= (I-J)x_{k+1} \\
	&= (I-J) A(k)x_k + (I-J)e_k\\
	&=A(k)x_k - A(k)Jx_k +(I-J)e_k\\
	& = A(k)\Delta_k + (I-J)e_k,
	\ead
	\end{equation}
	where the third equality is due to $JA(k)=J=A(k)J$. 
	Therefore,  the following recursion holds for $k\geq s \geq 0$
	\[   \Delta_{k+1}  = \Phi(k,s)\Delta_{s}+ \sum_{l=s}^{k-1} \Phi(k,l+1)(I-J)e_{l} + (I-J)e_k.\]

	Since $\mathbf{1}^\top \Delta_l= \mathbf{1}^\top (I-J)e_l=0,\ \forall l$, 
	we have
	\[   \bad
\Delta_{k+1} &=( \Phi(k,s)-J)\Delta_{s}\\
&\quad + \sum_{l=s}^{k-1} (\Phi(k,l+1)-J)(I-J)e_{l} + (I-J)e_k.
	\ead \]
It follows from \cref{lem:linear_rate_of_Phi} that there exist $c>0$ and $\lambda\in(0,1)$, where $\lambda$ is independent of $k$, such that
	\be\label{ineq:estimation_of_Delta}
	\bad
	\|\Delta_{k+1}\|& \leq   c\lambda^{k}\|\Delta_0\| +  c\sqrt{N}L \sum_{l=0}^{k-1} \lambda^{k-l-1} \alpha_{l} + \sqrt{N}L\alpha_k.
	\ead\ee
    With the  \cref{lem:finite_sum} and $\lim_{k\rightarrow\infty} \alpha_{k+1}/\alpha_k = 1$, we have \eqref{lem:consensus_order}  as desired.
\end{proof}


\begin{proof}[\textbf{Proof of \cref{lem:one-step}}]
	The following inequality holds because of  the non-expansiveness of  the projector
	\[
	\bad
	\left\|x_{i , k+1}- 	\hat{v}_{i,k} \right\|^{2} &= \normtwo{\Proj_{\mathcal{X}}\left( v_{i,k} - \alpha_k g_{i,k}\right)- \hat{v}_{i,k} }^2 \\
	&\leq \normtwo{ v_{i,k} - \alpha_k g_{i,k} - \hat{v}_{i,k}  }^2\\
	& =  \normtwo{ v_{i,k} - \hat{v}_{i,k}  }^2 - 2\alpha_k\inp{v_{i,k} - \hat{v}_{i,k}  }{g_{i,k}} \\
	&\quad + \alpha_k^2\normtwo{g_{i,k}}^2.
	\ead
	\]
	Recall the weak convexity of $f_i$ and the boundedness of $g_{i,k}$. It follows that 
	\be\label{eq_derivation2}
	\bad
	&\quad \|x_{i , k+1}- 	\hat{v}_{i,k} \|^{2}\\& \leq \normtwo{ v_{i,k} - \hat{v}_{i,k}  }^2  + 2\alpha_k( f_i(\hat{v}_{i,k})- f_i(v_{i,k}) \\
	&\quad + \frac{\rho}{2}\|v_{i,k}-\hat{v}_{i,k}\|^2 ) + L^2\alpha_k^2 .
	\ead \ee

	Using the Lipschitz continuity of $f_i$ and \Cref{lem:nonexpansive:proximal_map}, we have
	\begin{equation}\label{eq_derivation3-1}
	\bad
	&\quad f_i(\hat{v}_{i,k}) - f_i(v_{i,k}) \\
	&= f_i(\hat{v}_{i,k}) -f_i(s_k)+ f_i(s_k) - f_i(\bar{x}_k) + f_i(\bar{x}_k) -   f_i(v_{i,k}) \\
	&\leq L \normtwo{ \hat{v}_{i,k} - s_k} + f_i(s_k) - f_i(\bar{x}_k)  + f_i(\bar{x}_k) -   f_i(v_{i,k}) \\
	&\leq L( \frac{1}{1-t\rho}+1)\normtwo{ v_{i,k} - \bar{x}_k  }+ f_i(s_k) - f_i(\bar{x}_k)  \\
	&\leq \frac{L(2-t\rho)}{1-t\rho}\sum_{j=1}^N a_{i,j}(k)\normtwo{x_{j,k} - \bar{x}_k  }+ f_i(s_k) - f_i(\bar{x}_k) 
	\ead
	\end{equation}
	and 
	\begin{equation}\label{eq_derivation3-2}
	\bad
	&\quad \frac{\rho}{2}\normtwo{v_{i,k} - \hat{v}_{i,k}   }^2 \\
	&=\frac{\rho}{2}\normtwo{ v_{i,k}  -\bar{x}_k +\bar{x}_k -s_k + s_k -\hat{v}_{i,k}  }^2\\
	&\leq \rho\normtwo{\bar{x}_k -s_k  }^2 + \rho\normtwo{ v_{i,k}  -\bar{x}_k + s_k -\hat{v}_{i,k} }^2\\
	&\leq \rho\normtwo{\bar{x}_k -s_k  }^2 +  2\rho(1+ \frac{1}{(1-t\rho)^2})\normtwo{ v_{i,k}  -\bar{x}_k  }^2\\
	&\leq \rho\normtwo{\bar{x}_k -s_k  }^2 \\
	&\quad+  2\rho(1+ \frac{1}{(1-t\rho)^2})\sum_{j=1}^N a_{i,j}(k)\normtwo{ x_{j,k}  -\bar{x}_k  }^2.
	\ead
	\end{equation}
	
	Summing inequalities \eqref{eq_derivation3-1} and \eqref{eq_derivation3-2}  for $i=1,\ldots,N$, yields
	\be \label{eq_derivation4-1}
	\bad
	&\sum_{i=1}^N \left( f_i(\hat{v}_{i,k}) - f_i(v_{i,k}) +\frac{\rho}{2} \normtwo{v_{i,k} - \hat{v}_{i,k}   }^2   \right)	\\
	\leq  & \frac{L(2-t\rho)}{1-t\rho} \sum_{i=1}^N \normtwo{x_{i,k} - \bar{x}_k  }+ N( f(s_k) - f(\bar{x}_k) )\\
	+&N\rho \normtwo{\bar{x}_k -s_k  }^2 +  2\rho(1+ \frac{1}{(1-t\rho)^2})\sum_{i=1}^N\normtwo{ x_{i,k}  -\bar{x}_k  }^2.
	\ead\ee
	From the definition of $s_k$, if $t<\frac{1}{2\rho}$, one has
	\be \label{eq_derivation4-2}
	\bad
	&\quad f(s_k) - f(\bar{x}_k)  +   \rho  \normtwo{\bar{x}_k -s_k  }^2\\
	&= f(s_k) - f(\bar{x}_k)  +  (\frac{1}{2t}-\frac{1}{2t} +\rho) \normtwo{\bar{x}_k -s_k  }^2\\
	&\leq (-\frac{1}{2t} +\rho ) \normtwo{\bar{x}_k -s_k  }^2.
	\ead\ee 
	Therefore, we have \eqref{ineq:one_step} by combining  \eqref{eq_derivation2}, \eqref{eq_derivation4-1} and \eqref{eq_derivation4-2}.
\end{proof}


\begin{proof}[\textbf{Proof of \cref{thm:convergence_1}}]
	
(1).
	From the definition of $\varphi_{t}(x_{i ,k+1})$, we have
	\begin{equation}\label{eq_derivation1}
	\varphi_{t}\left(x_{i ,k+1}\right) \leq f(z)+\frac{1}{2 t}\left\|x_{i , k+1}-z\right\|^{2}, \quad \forall z\in \cX.
	\end{equation}
	Let
	$\hat{v}_{i,k}  =\argmin _{y\in\cX} f(y)+\frac{1}{2 t}\|y-v_{i,k}\|^{2}$ and 
	$\hat{x}_{i,k}  =\argmin _{y\in\cX} f(y)+\frac{1}{2 t}\|y-x_{i,k}\|^{2}.$
	
	Substituting $z = \hat{v}_{i,k}$  into \eqref{eq_derivation1}, we obtain
	\begin{equation}\label{eq_derivation3}
	\bad
	\varphi_{t}\left(x_{i ,k+1}\right) &\leq f(\hat{v}_{i,k})+\frac{1}{2 t}\left\|x_{i , k+1}-\hat{v}_{i,k}\right\|^{2}.
	\ead
	\end{equation}
	Summing the above inequality for $i$ and using inequality \eqref{ineq:one_step}  yields
	\be \label{eq_derivation}
	\bad
	&\sum_{i=1}^N \varphi_{t}\left(x_{i ,k+1}\right) \\
	\leq &\sum_{i=1}^N \varphi_{t}\left(v_{i ,k}\right)
	+\frac{\alpha_k}{t}\left(   N(-\frac{1}{2t} +\rho ) \normtwo{\bar{x}_k -s_k  }^2 \right.\\
	&\quad \left.+ \frac{L(2-t\rho)}{1-t\rho} \sum_{i=1}^N \normtwo{x_{i,k} - \bar{x}_k  } \right.\\
	&\quad \left.+   2\rho(1+ \frac{1}{(1-t\rho)^2})\sum_{i=1}^N\normtwo{ x_{i,k}  -\bar{x}_k  }^2 \right) + \frac{N L^2\alpha_k^2 }{2t}.
	\ead\ee
	Noticing $v_{i,k}=\sum_{j=1}^{N} a_{i,j}(k)x_{j,k}$,  we have 
	\begingroup\allowdisplaybreaks
	\begin{align}
	&\quad\varphi_t(v_{i,k}) \notag \\&= f( \sum_{j=1}^N a_{i,j}(k) \hat{v}_{i,k} )  + \frac{1}{2t}\normtwo{\sum_{j=1}^N a_{i,j}(k) (\hat{v}_{i,k} - x_{j,k} )  }^2 \notag \\
	&\leq  f( \sum_{j=1}^N a_{i,j}(k) \hat{x}_{j,k} )  + \frac{1}{2t}\normtwo{\sum_{j=1}^N a_{i,j}(k) (\hat{x}_{j,k} - x_{j,k} )  }^2\notag  \\
	&\leq \sum_{j=1}^N a_{i,j}(k) f(\hat{x}_{j,k} )\notag  \\
	&\quad + \frac{\rho}{2} \sum_{j=1}^{N-1}\sum_{l= j+1}^N a_{i,j}(k) a_{i,l}(k)\|\hat{x}_{j,k}-\hat{x}_{l,k}\|^2\notag \\
	&\quad + \sum_{j=1}^N a_{i,j}(k) \frac{1}{2t}\normtwo{\hat{x}_{j,k} - x_{j,k} }^2\notag \\
	& \leq  \sum_{j=1}^N a_{i,j}(k) \varphi_t(x_{j,k})\notag \\
	&\quad +\frac{\rho}{2(1-t\rho)^2} \sum_{j=1}^{N-1}\sum_{l= j+1}^N a_{i,j}(k) a_{i,l}(k)\|{x}_{j,k}-{x}_{l,k}\|^2 ,\notag 
	\end{align}\endgroup
	where the first inequality is because of  the definition of $\hat{v}_{i,k}$ and $\sum_{j=1}^N a_{i,j}(k) \hat{x}_{j,k} \in \cX$, the second inequality follows from inequality \eqref{ineq:alt_def_weak_cvx} in Lemma \ref{lem:alt_weak_cvx} and the convexity of $\|\cdot\|^2$ and the last inequality holds due to Lemma \ref{lem:nonexpansive:proximal_map}.
	Letting $\bar{\varphi}_{t,k+1}:= \frac{1}{N}\sum_{i=1}^{N}\varphi
	_{t}\left( x_{i,k+1}\right) $ together with  \eqref{eq_derivation} gives
	\begingroup\allowdisplaybreaks
	\begin{align}\label{inequality}
	& \bar{\varphi}_{t,k+1}\notag \leq \bar{\varphi}_{t,k}\notag \\
	& +\frac{\rho }{2N(1-t\rho)^{2}}\cdot\notag\\
	&\quad \sum_{i=1}^{N}\sum_{j=1}^{N-1}\sum_{l=j+1}^{N}a_{i,j}(k)a_{i,l}(k)%
	\Vert {x}_{j,k}-{x}_{l,k}\Vert ^{2}  \notag \\
	&+\frac{\alpha _{k}}{t}\left( (\rho-\frac{1}{2t} )\Vert \bar{x} 
	_{k}-s_{k}\Vert ^{2}\right.\notag\\
	& +\frac{L(2-t\rho )}{N(1-t\rho )} 
	\sum_{i=1}^{N}\Vert x_{i,k}-\bar{x}_{k}\Vert  \notag \\
	&+\left. \frac{2\rho }{N}(1+\frac{1}{(1-t\rho )^{2}})\sum_{i=1}^{N}\Vert
	x_{i,k}-\bar{x}_{k}\Vert ^{2}\right) +\frac{L^{2}\alpha _{k}^{2}}{2t}  \notag
	\\
    &	\leq \bar{\varphi}_{t,k}+b_{k}+\frac{L^{2}\alpha _{k}^{2}}{2t},  
	\end{align}
	\endgroup
	where {\small
	\begin{align*}
	\quad &b_{k}\\
	:=& \frac{\rho }{2N(1-t\rho )^{2}}\sum_{i=1}^{N}\sum_{j=1}^{N-1}%
	\sum_{l=j+1}^{N}a_{i,j}(k)a_{i,l}(k)\Vert {x}_{j,k}-{x}_{l,k}\Vert ^{2} \\
	& \quad +\frac{\alpha _{k}}{t}\left( \frac{L(2-t\rho )}{N(1-t\rho )}%
	\sum_{i=1}^{N}\Vert x_{i,k}-\bar{x}_{k}\Vert\right. \\
	& \quad \left. +\frac{2\rho }{N}(1+\frac{1}{(1-t\rho )^{2}}%
	)\sum_{i=1}^{N}\Vert x_{i,k}-\bar{x}_{k}\Vert ^{2}\right) .
	\end{align*}%
	}
	and the last inequality in (\ref{inequality}) follows from $-\frac{1}{2t}%
	+\rho <0$. By invoking \cref{lem:consensus}, we have 
	\be\label{price_bound_1} \sum_{i=1}^{N}  \sum_{j=1}^{N-1}\sum_{l= j+1}^N a_{i,j}(k) a_{i,l}(k)\|{x}_{j,k}-{x}_{l,k}\|^2 ={\mathcal{O}(\frac{NL^2\alpha_k^2}{(1-\lambda)^2})},  \ee
	\be\label{price_bound_2}   \alpha_k  \sum_{i=1}^N \normtwo{x_{i,k} - \bar{x}_k  }={\mathcal{O}(\frac{NL\alpha_k^2}{1-\lambda})},\ee
	 
	\be\label{price_bound_3}    \alpha_k  \sum_{i=1}^N \normtwo{x_{i,k} - \bar{x}_k  }^2 = {\mathcal{O}(\frac{NL^2\alpha_k^3}{(1-\lambda)^2})}
	\ee
	 and thus $ b_{k}={\mathcal{O}(\frac{L^2\alpha_k^2}{(1-\lambda)^2})}$. 
	 Because $f(x)$ is lower bounded on $\cX$, we have $\varphi_t(x)$ is also lower bounded on $\cX$. From \eqref{inequality} it follows that 
	 \[ \bar{\varphi}_{t,k+1} - \inf \varphi_t(x)  \leq \bar{\varphi}_{t,k} - \inf \varphi_t(x)  +  \mathcal{O}(\alpha_k^2).\]
	 Since $\sum_{k=0}^\infty\alpha_k^2<\infty,$ using Lemma 2\footnote{The lemma is stated as follows. Let $u_{k+1}\geq 0$ and let $u_{k+1}\leq (1+\alpha_k)u_k + \beta_k,$ $\sum_{k=0}^{\infty} \alpha_k< \infty$, \ $\sum_{k=0}^{\infty} \beta_k< \infty$. Then $u_k\rightarrow u\geq 0.$} in \cite[Chapter 2.2]{polyak1987introduction} we have $\{\bar{\varphi}_{t,k}\}$ converges to some value $\bar{\varphi}_t$.

	Recall that $\varphi _{t}(x)$ is continuous differentiable. Since $\Vert
	x_{i,k}-\bar{x}_{k}\Vert \rightarrow 0$, it follows that 
	\begin{equation*}
	|\varphi _{t}(x_{i,k})-\varphi _{t}(\bar{x}_{k})|^{2}\rightarrow 0
	\end{equation*}%
	and%
	\begin{equation*}
	\begin{aligned}
	\left\vert \bar{\varphi}_{t,k}-\varphi _{t}(\bar{x}_{k})\right\vert ^{2}&=| 
	\frac{1}{N}\sum_{i=1}^{N}\varphi _{t}(x_{i,k})-\varphi _{t}(\bar{x}
	_{k})|^{2}\\
	&\leq \frac{1}{N}\sum_{i=1}^{N}|\varphi _{t}(x_{i,k})-\varphi
	_{t}(\bar{x}_{k})|^{2}\rightarrow 0.
		\end{aligned}
	\end{equation*}%
	Thus, $\varphi _{t}(\bar{x}_{k})\rightarrow \bar{\varphi}_t$. 
	
	\bigskip
	(2). 	
	 The inequality (\ref{inequality}) can be re-written as%
	\begin{equation}  \label{eq:re-written}
	\frac{\alpha _{k}}{t}(\frac{1}{2t}-\rho )\Vert \bar{x}_{k}-s_{k}\Vert
	^{2}\leq \bar{\varphi}_{t,k}-\bar{\varphi}_{t,k+1}+b_{k}+\frac{L^{2}\alpha
		_{k}^{2}}{2t}.
	\end{equation}

		Using  \eqref{eq:re-written}, we have
		\begin{align*}
		\sum_{k=0}^{\infty}	\frac{\alpha _{k}}{t}(\frac{1}{2t}-\rho )\Vert \bar{x}_{k}-s_{k}\Vert
		^{2}\\
		\leq \bar{\varphi}_{t,0}-  \bar{\varphi}_{t}+\sum_{k=0}^{\infty} b_{k}+\sum_{k=0}^{\infty}\frac{L^{2}\alpha
			_{k}^{2}}{2t}.
		\end{align*}
		Dividing both sides by $\sum_{k=0}^{\infty} \alpha_k$  yields

		  \begin{align*} 
		  &\inf_{k=1,\ldots,\infty} \Vert \nabla \varphi_t(\bar{x}_k)\Vert^2 \\
		  &\leq \frac{2}{1-2t\rho} \frac{\bar{\varphi}_{t,0}- \bar{\varphi}_{t}+\sum_{k=0}^{\infty} b_{k}+\sum_{k=0}^{\infty}\frac{L^{2}\alpha_{k}^{2}}{2t}
		}{ \sum_{k=0}^{\infty} \alpha_k}.
	\end{align*}
		Since $ b_{k}={\mathcal{O}(\frac{L^2\alpha_k^2}{(1-\lambda)^2})}$, if $\alpha_k = \mathcal{O}(1/\sqrt{k})$, for sufficiently large $T$ we have
		\[ \inf_{1\leq t\leq T} \Vert \nabla \varphi_t(\bar{x}_k)\Vert^2 = { \mathcal{O}(\frac{\bar{\varphi}_{t,0}-  \bar{\varphi}_{t}}{\sqrt{T}} + \frac{L^2}{(1-\lambda)^2}\cdot\frac{\log T}{\sqrt{T}}) }.\]

\end{proof}

Before proving \cref{thm:linear_rate}, we need the following technical lemma.
	\begin{lemma}\label{lem:lemma_used_in_local_rate}
	Given $a>0$,  $0< 2b\leq a$ and $c\geq 1$,   the lower bound of the minimum value in   \eqref{prob:lemma} is given by $-\half N a^2 +   \frac{Nba}{c}.$
	\be\label{prob:lemma}
	\bad
	\min_{x_1,\ldots,x_N} &- \half \sum_{i=1}^N(x_i^2 -2 bx_i) \\
	\st &\quad \sum_{i=1}^{N} x_i^2 \leq N a^2, \\ 
	&\quad 0\leq  x_i\leq ca, \quad  \forall i.
	\ead\tag{$P_N$} 
	\ee
\end{lemma}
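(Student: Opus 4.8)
The plan is to obtain the lower bound directly, by exploiting the box constraint $0\le x_i\le ca$ together with the ball constraint $\sum_i x_i^2\le Na^2$, rather than solving the (nonconvex) program $(P_N)$ explicitly. First I would rewrite the objective as
\[
-\frac12\sum_{i=1}^N (x_i^2-2bx_i) = -\frac12\sum_{i=1}^N x_i^2 + b\sum_{i=1}^N x_i,
\]
a concave function of $x$; its minimum over the compact feasible region is attained at an extreme point, but instead of enumerating extreme points I would bound the objective from below using a single elementary inequality that couples the two constraints.

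The key step is the pointwise bound $x_i^2\le ca\,x_i$, valid for every $i$ because $0\le x_i\le ca$. Summing gives $\sum_i x_i^2\le ca\sum_i x_i$, i.e. $\sum_i x_i\ge \frac{1}{ca}\sum_i x_i^2$. Substituting this into the objective and using $b>0$ yields
\[
-\frac12\sum_{i=1}^N x_i^2 + b\sum_{i=1}^N x_i \;\ge\; \Big(\frac{b}{ca}-\frac12\Big)\sum_{i=1}^N x_i^2 .
\]
Next I would check the sign of the coefficient $\frac{b}{ca}-\frac12$. From $2b\le a$ we get $\frac{b}{a}\le\frac12$, and since $c\ge 1$ we have $\frac{b}{ca}\le\frac{b}{a}\le\frac12$, so the coefficient is nonpositive. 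Because $\sum_i x_i^2\le Na^2$ and the coefficient is $\le 0$, replacing $\sum_i x_i^2$ by its maximal value $Na^2$ only decreases the right-hand side, so that
\[
\Big(\frac{b}{ca}-\frac12\Big)\sum_{i=1}^N x_i^2 \;\ge\; \Big(\frac{b}{ca}-\frac12\Big)Na^2 \;=\; -\frac12 Na^2 + \frac{Nba}{c},
\]
which is precisely the claimed bound.

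The only real subtlety, and the main obstacle, is recognizing that the right way to link the two constraints is through $x_i^2\le ca\,x_i$: this single inequality converts the linear term $b\sum_i x_i$ into a multiple of $\sum_i x_i^2$, after which the ball constraint closes the argument. One must then track the sign of $\frac{b}{ca}-\frac12$ carefully, since it determines the direction in which the ball constraint is applied. I would also add a short remark that this lower bound is in fact attained — by placing $N/c^2$ of the coordinates at $ca$ and the rest at $0$, when such a configuration is feasible, both $x_i^2\le ca\,x_i$ and $\sum_i x_i^2=Na^2$ hold with equality — which explains why the lemma states the quantity as a lower bound on, rather than the exact value of, the minimum of $(P_N)$.
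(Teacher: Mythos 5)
Your proof is correct, and it takes a genuinely different route from the paper. The paper proves the lemma by Lagrangian weak duality: it forms the dual function $g(\lambda)=\min_{0\le x_i\le ca}-\frac12\sum_i(x_i^2-2bx_i)+\lambda\bigl(\sum_i x_i^2-Na^2\bigr)$, evaluates it in closed form, and maximizes over $\lambda\ge 0$ to find $g\bigl(\tfrac12-\tfrac{b}{ca}\bigr)=-\tfrac12Na^2+\tfrac{Nba}{c}$, invoking weak duality to conclude. You instead bound the objective pointwise: the box constraint gives $x_i^2\le ca\,x_i$, hence $b\sum_i x_i\ge \tfrac{b}{ca}\sum_i x_i^2$, and the resulting coefficient $\tfrac{b}{ca}-\tfrac12\le 0$ (from $2b\le a$, $c\ge 1$) lets the ball constraint finish the argument. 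Your version is more elementary and self-contained --- no duality machinery, just two inequalities whose directions you track correctly. It is worth noting the two arguments are secretly the same certificate: your inequality $bx_i-\tfrac{b}{ca}x_i^2=\tfrac{b}{ca}x_i(ca-x_i)\ge 0$ is precisely the statement that the paper's Lagrangian at the optimal multiplier $\lambda^*=\tfrac12-\tfrac{b}{ca}$ is minimized (at value zero) on the box; what the duality formulation buys is a systematic way to \emph{discover} that multiplier, whereas your route requires guessing the right coupling inequality but then reads off the bound in three lines. Your closing remark on attainment (placing $N/c^2$ coordinates at $ca$ when that count is an integer) is also correct and explains why the lemma is phrased as a lower bound rather than an exact value.
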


\begin{proof} [Proof of  \cref{lem:lemma_used_in_local_rate}]
The dual function is given by
\[
 g(\lambda):=\min_{0\leq x_i\leq ca}- \half \sum_{i=1}^N(x_i^2 -2 bx_i) + \lambda(\sum_{i=1}^N x_i^2 - Na^2  ), \]
 where $\lambda\geq 0$.
We have 
\begin{align*}
    &\quad g(\lambda) \\
    &= N\cdot  \min_{0\leq x\leq ca} \{(\lambda-\half) x^2 + bx \}  -  \lambda N a^2 \\
    & = \left\{\begin{matrix}
-\lambda Na^2 & \text{if} \quad \lambda \geq \half-\frac{b}{ca},\\ 
   N\left[ (\lambda - \half)c^2a^2  + cba \right] - \lambda Na^2     & \text{otherwise.}
\end{matrix}\right.
\end{align*}
Note that $\half - \frac{b}{ca}\geq 0.$
Therefore, we have 
\[   \max_{\lambda \geq 0} g(\lambda)=g(\half - \frac{b}{ca})=-\half N a^2 +   \frac{Nba}{c} . \]
The weak duality implies the desired result.
\end{proof}

\begin{proof} [\textbf{Proof of \cref{thm:linear_rate}}]
	 We prove it by induction. 
	By the definition of $e_0$ and the assumptions on $k=0$, we have $\sum_{i=1}^{N}\| {x}_{i,0}-x^*\|^2 \leq  N \gamma^{2k}e_0^2$ and $\|x_{i,0}-x^*\|\leq \Gamma e_0, \forall i\in[N]:=\{1,\ldots,N\}$. Assume that \eqref{ineq:linear_1} and \eqref{ineq:linear_reg2} hold for $k\geq 0$. For $k+1$, we have
	\begingroup
	\allowdisplaybreaks
	\begin{align*}
	&\quad   \sum_{i=1}^N \| x_{i,k+1} - x^*\|^2\\
	& \leq \sum_{i=1}^N \|v_{i,k} - \alpha_k g_{i,k} - x^*\|^2\\
	&\leq \sum_{i=1}^N \left( \|v_{i,k} - x^*\|^2 -2\alpha_k \left\langle v_{i,k} - x^*,g_{i,k}\right\rangle \right) + N L^2\alpha_k^2  \\
	&\leq \sum_{i=1}^N \left( \|v_{i,k} - x^*\|^2 -2\alpha_k(f_i(v_{i,k})- f_i(x^*) )\right.\\
	&\quad \left.+ \alpha_k{\rho}\|v_{i,k}-x^*\|^2  \right) + N L^2\alpha_k^2 \\
	&= \sum_{i=1}^N \left( \|v_{i,k} - x^*\|^2 -2\alpha_k(f_i(v_{i,k})- f_i(\bar{x}_k) \right.\\
	&\quad \left.+ f_i(\bar{x}_k)- f_i(x^*) ) 
	+ \alpha_k{\rho}\|v_{i,k}-x^*\|^2 \right) + N L^2\alpha_k^2 \\
	&\leq \sum_{i=1}^N \left((1+\rho \alpha_k) \|v_{i,k} - x^*\|^2 + 2L\alpha_k\|v_{i,k} - \bar{x}_k\| \right) \\&\quad  -2N\beta\alpha_k \|\bar{x}_k -  x^*\|
	+ N L^2 \alpha_k^2,\\
	\end{align*}
	\endgroup
	where the  third inequality follows from the weak convexity and the last one is due to the sharpness property and Lipschitz continuity of $f_i$. Using the convexity of $\|\cdot\|^2$ and $\|\cdot\|$ and the stochasticity of columns of  $A(k)$, we have
	\begingroup
	\allowdisplaybreaks
	\begin{align}
	&\quad \sum_{i=1}^N \| x_{i,k+1} - x^*\|^2\notag\\
	&\leq \sum_{i=1}^N \left((1+\rho \alpha_k) \|x_{i,k} - x^*\|^2 + 2L\alpha_k\|x_{i,k} - \bar{x}_k\| \right)\notag \\
	&\quad -2N\beta\alpha_k \|\bar{x}_k -  x^*\|+ N L^2 \alpha_k^2\notag\\
	&\leq \sum_{i=1}^N \left((1+\rho \alpha_k) \|x_{i,k} - x^*\|^2 -2\beta\alpha_k \|x_{i,k} -  x^*\|\right) \notag\\
	&\quad + {2(L+\beta) \alpha_k}\sum_{i=1}^{N} \|\bar{x}_{k} - x_{i,k}\|+ N L^2 \alpha_k^2\notag\\
	&\leq  \sum_{i=1}^N \left((1+\rho \mu_0) \|x_{i,k} - x^*\|^2 -2\beta\alpha_k \|x_{i,k} -  x^*\|\right)\notag \\
	&\quad + \frac{2\sqrt{N}(L+\beta) c\|\Delta_0\|}{\lambda} \gamma^k \alpha_k \notag\\
	&\quad + \frac{ 2N (L+\beta)L}{\lambda^2} (\frac{c{\gamma}^{1/\delta-1}}{1-{\gamma}^{1/\delta-1}}+\lambda)\alpha_k^2+N L^2 \alpha_k^2,\label{ineq:linear_rate1} 
	\end{align}
	\endgroup
	where we use 
	$\|\bar{x}_k-x^*\|\geq   \| x_{i,k} - x^*\| -\|\bar{x}_{k} - x_{i,k}\|$ and $\|\cdot\|_1\leq \sqrt{N}\|\cdot\|$ in the second inequality.  
	The last inequality is due to \eqref{ineq:estimation_of_Delta_2}. 
	Recall the induction assumption that $\sum_{i=1}^N \| x_{i,k} - x^*\|^2\leq N e_0^2\gamma^{2k}$ and $\| x_{i,k} - x^*\|\leq \Gamma e_0 \gamma^k$. 
	Since 
	\be\label{ineq:cond_mu_1}\mu_0\leq \frac{e_0}{2\beta - \rho e_0},\ee we have $\frac{2\beta \alpha_k}{1+\rho \mu_0} = \frac{2\beta \mu_0 \gamma^{k}}{1+\rho \mu_0} \leq e_0 \gamma^{k} $. By invoking \cref{lem:lemma_used_in_local_rate}(letting $a=e_0\gamma^k$, $b=\frac{\beta \alpha_k}{1+\rho \mu_0}$ and $c=\Gamma$ in the lemma), 
	we deduce that 
	\begin{align*} 
	& \quad (1+\rho \mu_0)\sum_{i=1}^N \left( \|x_{i,k} - x^*\|^2 - \frac{2\beta\alpha_k}{(1+\rho \mu_0)} \|x_{i,k} -  x^*\|\right)\\
	&\leq  (1+\rho \mu_0)N e_0^2\gamma^{2k} - 2\frac{N}{\Gamma}\beta\alpha_k  e_0 \gamma^k.
	\end{align*} 
	This, together with \eqref{ineq:linear_rate1} yields
		\begingroup
	\allowdisplaybreaks
	\begin{align*} 
	&\quad\sum_{i=1}^N \| x_{i,k+1} - x^*\|^2\\
	&\leq (1+\rho \mu_0) N( e_0 \gamma^{k} )^2 - 2\frac{N}{\Gamma}\beta \mu_0 e_0 \gamma^{2k}\\
	&\quad + \frac{2\sqrt{N}(L+\beta) c\|\Delta_0\|}{\lambda} \mu_0\gamma^{2k}  \\
	&\quad + \frac{2 N(L+\beta) L}{\lambda^2}(\frac{c{\gamma}^{1/\delta}}{1-\gamma^{1/\delta-1}}+\lambda)\mu_0^2  \gamma^{2k} + N L^2 \mu_0^2  \gamma^{2k}\\
	& = N\gamma^{2k} e_0^2 \left( 1 + (\rho  - \frac{2\beta }{\Gamma e_0}+\frac{2(L+\beta)c\|\Delta_0\|}{\sqrt{N}\lambda e_0^2} )\mu_0 + \right.\\
	&\quad + \left.  \frac{\frac{2(L+\beta)L}{\lambda^2}(\frac{c{\gamma}^{1/\delta-1}}{1-\gamma^{1/\delta-1}}+\lambda)+ L^2 }{e_0^2}\mu_0^2 \right)\\
	& =  N\gamma^{2k} e_0^2 \left( 1 - \frac{q}{e_0^2}\mu_0 + \frac{\frac{ac{\gamma}^{1/\delta-1}}{1-\gamma^{1/\delta-1}} + a\lambda+ L^2}{e_0^2}\mu_0^2 \right),
	\end{align*}
	\endgroup
	where $a=\frac{2 (L+\beta)L}{\lambda^2}, q = \frac{2\beta }{\Gamma } e_0- \rho e_0^2-\frac{2(L+\beta) c}{\sqrt{N}\lambda}\|\Delta_0\|$.
	Since $\gamma\in (0,1)$, if we have the following two conditions
	\begin{enumerate}[label=\arabic*)]
		\item 	$q>0$\\
		\item 	\be\label{ineq_induction_step}
		1>\gamma^2 \geq 1-\frac{q\mu_0}{e_0^2} + \frac{\frac{ac\gamma^{1/\delta-1}}{1-\gamma^{1/\delta-1}} + a \lambda+ L^2}{e_0^2}\mu_0^2,\ee 
	\end{enumerate}
	the result follows 
	\[
	\sum_{i=1}^N \| x_{i,k+1} - x^*\|^2\leq N \gamma^{2(k+1)} e_0^2.
	\]

	Proof of Condition 1) Since  $e_0\leq \frac{2\beta}{\rho\Gamma }$ and
	\be\label{ineq:cond_Delta} 
	\|\Delta_0\| < \frac{\frac{2\beta }{\Gamma} e_0 - \rho e_0^2}{2(L+\beta) c}\lambda,  \ee 
	we have $q>  0.$\\
	
	Proof of Condition 2) To ensure \eqref{ineq_induction_step},
	it is sufficient to show \be\label{ineq_induction_step2}
	1>\gamma^2\geq 1-\frac{q\mu_0}{10e_0^2\sqrt{N}} + \frac{\frac{ac\gamma^{1/\delta-1}}{1-\gamma^{1/\delta-1} } + a\lambda + L^2}{e_0^2}\mu_0^2,\ee for some $\gamma \in (0,1)$, which is equivalent to \\
	{\small
	\[\bad-\gamma^{1/\delta+1} + \gamma^2 + (1 -\frac{q\mu_0}{10e_0^2\sqrt{N}} + \frac{ a\lambda + L^2 - ac}{e_0^2}\mu_0^2 )\gamma^{1/\delta-1} \\
	- (1 -\frac{q\mu_0}{10e_0^2\sqrt{N}} + \frac{ a\lambda + L^2}{e_0^2}\mu_0^2 )\geq 0, \ead \]} 
	if we multiply by $(1-\gamma^{1/\delta-1})$ and re-arrange the terms. Consider the function  
	\[ \bad 
	 &\phi(\gamma )\\ =&-\gamma^{1/\delta+1} + (1 -\frac{q\mu_0}{10e_0^2\sqrt{N}} + \frac{ a\lambda+ L^2 - ac}{e_0^2}\mu_0^2 )\gamma^{1/\delta-1} \\
	 &+ \gamma^2 - (1 -\frac{q\mu_0}{10e_0^2\sqrt{N}} + \frac{ a\lambda + L^2}{e_0^2}\mu_0^2 ). 
	\ead
	\] 
	Our goal is to find $1>\delta>0$ such that   $\phi( \lambda^\delta )\geq 0$ when $\mu_0>0$. 
	
	Letting
	$\mathbf{\epsilon} {:=} -\frac{q\mu_0}{10e_0^2\sqrt{N}} + \frac{ a\lambda + L^2}{e_0^2}\mu_0^2$, we have $\epsilon <0$  since $0<\mu_0< \frac{q}{10(a\lambda +L^2)\sqrt{N}}$ due to \cref{assup:geo_step}. By the same token we have
	\be\label{ineq:cond_mu_2} -\frac{ac}{e_0^2}\mu_0^2\geq (\frac{1}{{\Lambda}}-1)\epsilon, \quad \text{as} \quad \mu_0\leq \frac{q}{10\sqrt{N}(a\lambda +L^2+\frac{ ac\Lambda}{1-\Lambda})}. \ee

	Therefore, if $ 0<\mu_0\leq \frac{q}{10\sqrt{N}(a\lambda +L^2+\frac{ ac\Lambda}{1-\Lambda})}$, we have
		\begingroup
	\allowdisplaybreaks
	\begin{align*}
	&\quad\phi(\lambda^\delta )\\
	&=(1-\lambda^{2\delta}+\epsilon - \frac{ac\mu_0^2}{e_0^2})\lambda^{1-\delta}+ \lambda^{2\delta} - (1+\epsilon)\\
	&\geq (1-\lambda^{2\delta}+\frac{1}{\Lambda}\epsilon)\lambda^{1-\delta}+ \lambda^{2\delta} - (1+\epsilon)\\
	&= (1- \lambda^{2\delta})(\lambda^{1-\delta}-1) + \frac{1}{\Lambda}\lambda^{1-\delta} \epsilon - \epsilon.
	\end{align*}
	\endgroup
	It is clear for every $\lambda\in(0,1)$ that 
	\[ (1- \lambda^{2\delta})(\lambda^{1-\delta}-1) + \frac{1}{\Lambda}\lambda^{1-\delta} \epsilon\rightarrow  \frac{1}{\Lambda}\lambda \epsilon \quad \text{as} \quad \delta\rightarrow 0. \]
	Therefore, there exists sufficiently small $\delta>0$ such that $\phi(\lambda^\delta )\geq \frac{1}{\Lambda}\lambda \epsilon-\epsilon>0$, since $\Lambda>\lambda$ and $\epsilon<0$.

	Combining \eqref{ineq:cond_mu_1}, \eqref{ineq:cond_mu_2} and \eqref{ineq:cond_Delta}, we have 
	\[ \sum_{i=1}^N \| x_{i,k+1} - x^*\|^2\leq N e_0^2\gamma^{2k+2},\]
	if $0<\mu_0\leq \min\{\frac{e_0}{2\beta - \rho e_0},\frac{q}{10\sqrt{N}(a\lambda +L^2+\frac{ ac\Lambda}{1-\Lambda})}\}$,  $ \gamma =\lambda^{\delta}$ and $ \|\Delta_0\| < \frac{\frac{2 }{\Gamma}\beta e_0 - \rho e_0^2}{2(L+\beta) c}\lambda$.

	Lastly, we need to verify \eqref{ineq:linear_reg2} for $k+1$.  Since $\|\bar{x}_{k+1} - x^*\|^2\leq \frac{1}{N}\sum_{i=1}^N \| x_{i,k+1} - x^*\|^2\leq e_0^2\gamma^{2k+2},$ it follows from \eqref{ineq:estimation_of_Delta_2} that
	{\small 
	\begin{align*}
	&\quad\| x_{i,k+1} - x^* \|\\
	&\leq \| x_{i,k+1} - \bar{x}_{k+1}\| + \|\bar{x}_{k+1} - x^*\|\\
	&\leq \|\Delta_{k+1}\| + e_0 \gamma^{k+1}\\
	&\leq \left(  \frac{c}{\lambda}  \|\Delta_0\| +     \frac{\sqrt{N}L}{\lambda^2}( c \frac{\gamma^{\frac{1}{\delta}-1}}{1- \gamma^{\frac{1}{\delta}-1}} + \lambda)\mu_0  \right)\gamma^{k+1} + e_0 \gamma^{k+1}
	\end{align*}
	}
	Using \eqref{ineq_induction_step2}, one has \[( c \frac{\gamma^{\frac{1}{\delta}-1}}{1- \gamma^{\frac{1}{\delta}-1}} + \lambda)\mu_0< \frac{q}{10\sqrt{N} a}\leq \frac{\beta e_0}{5\sqrt{N} a}.   \]
	Therefore, we have
	\[ \| x_{i,k+1} - x^* \|\leq\left(  \frac{c}{\lambda}  \|\Delta_0\| +   \frac{\beta}{10(L+\beta)}e_0 \right)\gamma^{k+1} + e_0 \gamma^{k+1}. \]
	Since $ \|\Delta_0\| < \frac{\frac{2 }{\Gamma} \beta e_0 - \rho e_0^2}{2(L+\beta) c}\lambda $, it follows that \[ \bad
	&\quad\| x_{i,k+1} - x^* \|\\
	&\leq\left(  \frac{\frac{2 }{\Gamma}\beta e_0 - \rho e_0^2}{2(L+\beta) }+   \frac{\beta}{10(L+\beta)}e_0 \right)\gamma^{k+1} + e_0 \gamma^{k+1}\\
	&\leq (\frac{1}{2\Gamma}+\frac{21}{20})e_0 \gamma^{k+1}\\
	&\leq \Gamma e_0 \gamma^{k+1},
	\ead\]
	where the second inequality follows from  $\beta\leq L$ and the last inequality holds since $\Gamma\geq \sqrt{2}$. 
\end{proof}

\begin{proof}[\textbf{Proof of \cref{thm:convergence_stoDPSM}}]
The proof is quite similar to that of algorithm \eqref{alg:dis_subgrad_orgin}. We explain the main steps below. First, we have the same consensus lemma as Lemma  \ref{lemma:consensus1}.  
Substituting $z = \hat{v}_{i,k}$ into \eqref{eq_derivation1} and taking expectation conditioned on $k$, we obtain
\begin{equation}\label{eq_derivation3-sto}
\bad
&\quad \E	\varphi_{t}\left(x_{i ,k+1}\right) \\
&\leq  f(\hat{v}_{i,k})+\E \frac{1}{2 t}\left\|x_{i , k+1}-\hat{v}_{i,k}\right\|^{2} \\
&\leq  f(\hat{v}_{i,k}) + \frac{1}{2 t} \E\left(   \normtwo{ v_{i,k} - \hat{v}_{i,k}  }^2 - 2\alpha_k\inp{v_{i,k} - \hat{v}_{i,k}  }{\xi_{i,k}} \right.\\
&\quad\left. + \alpha_k^2\normtwo{\xi_{i,k}}^2   \right)\\
&\leq \varphi_{t}(v_{i,k}) - \frac{\alpha_k}{t}\inp{v_{i,k} - \hat{v}_{i,k}  }{g_{i,k}} + \frac{\alpha_k^2}{t}L^2.
\ead
\end{equation}
Then, the remaining parts of the proof are the same as  that of Theorem \ref{thm:convergence_1}.
\end{proof}

\bibliographystyle{ieeetr}
\bibliography{all}

\end{document}